\newcommand{\R}{\mathbb{R}}
\newcommand{\C}{\mathbb{C}} 
\newcommand{\N}{\mathbb{N}}
\newcommand{\Z}{{\mathbb Z}}
\renewcommand{\H}{\mathcal{H}}
\newcommand{\K}{\mathcal{K}}
\newcommand{\Per}{{\bf P}}
\renewcommand{\phi}{\varphi}
\newcommand{\xn}{{\bf x}}
\newcommand{\yn}{{\bf y}}
\newcommand{\wn}{{\bf w}}
\newcommand{\zn}{{\bf z}}
\newcommand{\vn}{{\bf v}}
\theoremstyle{plain}
    \newtheorem{theorem}{Theorem}[section]
    \newtheorem{lemma}[theorem]{Lemma}
    \newtheorem{corollary}[theorem]{Corollary}
    \newtheorem{proposition}[theorem]{Proposition}
\theoremstyle{definition}
    \newtheorem{definition}[theorem]{Definition}
    \newtheorem{example}[theorem]{Example}
    \newtheorem{remark}[theorem]{Remark}
\theoremstyle{remark}
\DeclareMathOperator{\id}{id}
\DeclareMathOperator{\supp}{supp}
\DeclareMathOperator{\tr}{tr}
\DeclareMathOperator{\fin}{fin}
\begin{document}

\title[The stable algebra of a Wieler solenoid]{The stable algebra of a Wieler solenoid: inductive limits and K-theory}
\author{Robin J. Deeley}
\address{Robin J. Deeley,   Department of Mathematics,
University of Colorado Boulder
Campus Box 395,
Boulder, CO 80309-0395, USA }
\email{robin.deeley@colorado.edu}
\author{Allan Yashinski}
\address{Allan Yashinski,  Department of Mathematics, University of Maryland, College Park, MD 20742-4015, USA }
\email{ayashins@math.umd.edu}

\begin{abstract}
Wieler has shown that every irreducible Smale space with totally disconnected stable sets is a solenoid (i.e., obtained via a stationary inverse limit construction). Using her construction, we show that the associated stable $C^*$-algebra is the stationary inductive limit of a $C^*$-stable Fell algebra that has compact spectrum and trivial Dixmier-Douady invariant. This result applies in particular to Williams solenoids along with other examples. Beyond the structural implications of this inductive limit, one can use this result to in principle compute the $K$-theory of the stable $C^*$-algebra. A specific one-dimensional Smale space (the $aab/ab$-solenoid) is considered as an illustrative running example throughout.
\end{abstract}

\maketitle

\section*{Introduction}
In \cite{Wil} Williams showed that an important class of attractors can be realized via an explicit stationary inverse limit construction. These dynamical systems are examples of Axiom A basic sets \cite{Sma} and fit within Ruelle's framework of Smale spaces \cite{Rue}. Based on Williams' construction, Wieler \cite{Wie} showed that every irreducible Smale space with totally disconnected stable sets can be realized via an explicit stationary inverse limit satisfying certain natural axioms, see Section \ref{WielerSection} for the precise statement. Based on Wieler's result, we refer to such Smale spaces as Wieler solenoids.

Ruelle, Putnam, and Spielberg \cite{Put, PutSpi, Rue} have constructed and studied $C^*$-algebras associated to a Smale space. Up to Morita equivalence, the stable $C^*$-algebra of a Smale space is the groupoid $C^*$-algebra of an \'{e}tale groupoid defined using the stable equivalence relation, as in \cite{PutSpi}. In the present paper, we study the structure of this stable $C^*$-algebra for a Wieler solenoid. We use the inverse limit structure considered by Wieler to obtain a stationary inductive limit structure of the stable $C^*$-algebra.  

Our construction is very much inspired by a construction of Gon\c{c}alves \cite{Gon1, Gon2} (also see Mingo \cite{Min}, Williamson \cite{WilPhD} and the recent paper \cite{GonRamSol}) in the special case of tilings. In addition, we have been influenced by the work of Renault \cite{RenAP}, Thomsen \cite{ThoAMS, ThoSol} and Yi \cite{Yi}. Of course, the idea that an inverse limit of spaces should lead to an inductive limit of $C^*$-algebras is both natural and well-studied. Another starting point for this work is the first listed author's work with Goffeng, Mesland, and Whittaker \cite{DGMW} and with Strung \cite{DS}.

Let us briefly recall the construction of the stable $C^*$-algebra of a (mixing) Smale space $(X, \varphi)$ as defined by Putnam and Spielberg in \cite{PutSpi}. Section \ref{Section-SmaleSpaces} contains more details on their construction along with relevant definitions. We first fix a finite $\varphi$-invariant subset $\Per$ of $X$; the points in $\Per$ are periodic with respect to the homeomorphism $\varphi$.  Then we consider the set $X^u(\Per)$ of all points in $X$ which are unstably equivalent to some point in $\Per$.  On the set $X^u(\Per)$, we consider the stable equivalence relation $\sim_s$, viewed as a groupoid
\[ G^s(\Per) := \{ (x, y) \in X^u(\Per) \times X^u(\Per) \: | \: x \sim_s y \}.\]
The groupoid $G^s(\Per)$ has an \'{e}tale topology, and the stable $C^*$-algebra of $(X, \phi)$ is the groupoid $C^*$-algebra $C^*(G^s(\Per))$.

In the case where $X$ is a Wieler solenoid, we use the inverse limit structure to define a subrelation $\sim_0$ of $\sim_s$.  There is a corresponding subgroupoid
\[ G_0(\Per) := \{ (x, y) \in X^u(\Per) \times X^u(\Per) \: | \: x \sim_0 y \} \]
of $G^s(\Per)$.  The equivalence relation $\sim_0$ is constructed so that $G_0(\Per)$ is open in $G^s(\Per)$, and therefore $G_0(\Per)$ is \'{e}tale.  Building on this, we use the fact that the inverse limit is stationary to prove the following result.

\begin{theorem}
There is a nested sequence of \'{e}tale subgroupoids
\[ G_0(\Per) \subset G_1(\Per) \subset G_2(\Per) \subset \ldots \]
of $G^s(\Per)$ such that $G^s(\Per) = \bigcup_{k=0}^\infty G_k(\Per)$ and each $G_k(\Per)$ is isomorphic to $G_0(\Per)$ in a natural way.
\end{theorem}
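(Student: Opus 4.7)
The plan is to propagate the subgroupoid $G_0(\Per)$ throughout $G^s(\Per)$ using powers of $\varphi$ itself. Since $\Per$ is $\varphi$-invariant, the coordinate-wise map $\alpha \colon G^s(\Per) \to G^s(\Per)$, defined by $\alpha(x,y) := (\varphi(x), \varphi(y))$, is a well-defined groupoid automorphism that is a homeomorphism for the \'etale topology (its inverse is $(\varphi^{-1}, \varphi^{-1})$). I would then define
\[
G_k(\Per) := \alpha^{-k}\bigl(G_0(\Per)\bigr) = \{(x,y) \in G^s(\Per) : (\varphi^k(x), \varphi^k(y)) \in G_0(\Per)\}.
\]
The desired natural isomorphism $G_k(\Per) \cong G_0(\Per)$ is then supplied by $\alpha^k$ itself. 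Because $G_0(\Per)$ is open in $G^s(\Per)$ and $\alpha^k$ is continuous, each $G_k(\Per)$ is open in $G^s(\Per)$, hence \'etale with the subspace topology.

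Next I would establish the nesting $G_k(\Per) \subset G_{k+1}(\Per)$, which reduces to the single inclusion $\alpha(G_0(\Per)) \subset G_0(\Per)$, i.e.\ to showing that the relation $\sim_0$ is preserved under applying $\varphi$ to both coordinates. Since $\sim_0$ is defined through the stationary inverse limit structure of a Wieler solenoid and $\varphi$ shifts those coordinates, stationarity should reduce this invariance to an essentially combinatorial check against Wieler's axioms.

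The exhaustion $\bigcup_{k \geq 0} G_k(\Per) = G^s(\Per)$ is where the Smale space dynamics enter. For any $(x, y) \in G^s(\Per)$, the stable distance $d(\varphi^k(x), \varphi^k(y))$ tends to $0$ as $k \to \infty$ by the defining contraction of $\varphi$ on stable sets. Provided $\sim_0$ is designed so that any sufficiently close pair of stably equivalent points lies in $\sim_0$ --- the natural content of agreement ``at level $0$'' of the inverse limit --- this forces some $k$ with $\alpha^k(x,y) \in G_0(\Per)$, and hence $(x,y) \in G_k(\Per)$.

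The main obstacle I anticipate is the careful bookkeeping around $\sim_0$: extracting from its definition both the $\alpha$-invariance needed for nesting and the capturing-of-close-pairs property needed for exhaustion. Both hinge on unpacking Wieler's axioms and matching the contraction behaviour of $\varphi$ on stable sets with the shift on the inverse limit. Once these two properties of $\sim_0$ are in hand, the remaining assertions --- \'etaleness of each $G_k(\Per)$, the naturality of the isomorphism with $G_0(\Per)$, and the compatibility of the nested subgroupoids with the groupoid structure of $G^s(\Per)$ --- follow formally from $\alpha$ being a homeomorphic groupoid automorphism of $G^s(\Per)$.
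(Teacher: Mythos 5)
Your proposal matches the paper's construction essentially verbatim: the paper defines $G_k(\Per) = \{(\xn,\yn) : \varphi^k(\xn) \sim_0 \varphi^k(\yn)\}$, obtains the isomorphism via $\varphi^k \times \varphi^k$, and proves exhaustion by noting that stable equivalence forces $g^l(x_0) = g^l(y_0)$ for some $l$, after which the iterates agree in their first $K_0$ coordinates and hence are $\sim_0$-equivalent (the paper's Lemma \ref{tildeZeroEquivIfKzeroEqual}, which is the precise form of your ``sufficiently close pairs lie in $\sim_0$'' property --- note it is exact agreement of initial coordinates, not mere metric closeness, that is needed). The two deferred verifications you flag (invariance of $\sim_0$ under $\varphi$ and the capture of stably equivalent pairs after enough iterations) are exactly the points the paper handles in Proposition \ref{GkIsoGzero} and Theorem \ref{inductiveLimit}.
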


This allows one to reduce the study of $G^s(\Per)$ to $G_0(\Per)$, which is easier to understand.  To see why it is easier, first note that the space $X^u(\Per)$ has a natural topology, which coincides with the topology of the diagonal subspace of $G^s(\Per)$. Note we never consider the subspace topology of $X^u(\Per)$ it inherits from $X$, as $X^u(\Per)$ is dense as a subset of $X$. Likewise the topology defined on $G^s(\Per)$ is not the same as the subspace topology it inherits as a subspace of $X^u(\Per) \times X^u(\Per)$. However, the topology of $G_0(\Per)$ does coincide with the subspace topology from $X^u(\Per) \times X^u(\Per)$.

This last observation places $G_0(\Per)$ into the framework of \cite{CHR, HKS}, from which it follows that $C^*(G_0(\Per))$ is a Fell algebra.  A Fell algebra can be viewed as a generalization of a continuous-trace $C^*$-algebra in which the spectrum is not necessarily Hausdorff, but is locally Hausdorff.  In particular, the notion of a Dixmier-Douady invariant can be generalized to Fell algebras. This extension was introduced in \cite{HKS} (while for more on the original notion see \cite{RaWi}).  More precisely, the results from \cite{CHR, HKS} imply that the quotient map $q: X^u(\Per) \to X^u(\Per)/{\sim_0}$ is a local homeomorphism and the $C^*$-algebra $C^*(G_0(\Per))$ is a Fell algebra with spectrum $X^u(\Per)/{\sim_0}$ and trivial Dixmier-Douady invariant.  

The nested sequence of groupoids induces an inductive limit of $C^*$-algebras $C^*(G^s(\Per)) \cong \varinjlim C^*(G_k(\Per))$.  The isomorphism $C^*(G_k(\Per)) \cong C^*(G_0(\Per))$ is such that this inductive sequence is isomorphic to a stationary inductive sequence whose connecting homomorphism $\psi: C^*(G_0(\Per)) \to C^*(G_0(\Per))$ can be described in terms of the dynamics of the Wieler solenoid.
The following theorem below is our main result.  Note that by $C^*$-stable, we mean stable in the $C^*$-algebraic sense (that is, absorbs the compact operators on a separable Hilbert space).
\begin{theorem} (see Theorem \ref{inductiveLimit}) \label{MainResult} \\
The stable $C^*$-algebra $C^*(G^s(\Per))$ of an irreducible Wieler solenoid is isomorphic to the stationary inductive limit $\varinjlim (C^*(G_0(\Per)), \psi)$ where $C^*(G_0(\Per))$ is a $C^*$-stable Fell algebra that has compact, locally Hausdorff spectrum and trivial Dixmier-Douady invariant.
\end{theorem}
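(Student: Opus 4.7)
The plan is to assemble the statement from three pieces: the inductive limit identification, the description of each building block $C^*(G_0(\Per))$, and the verification that $\psi$ can be chosen stationary.

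First, I would use the previous theorem to get the inductive limit. Since each $G_k(\Per)$ is an open \'etale subgroupoid of $G^s(\Per)$, extension by zero yields inclusions $C^*(G_k(\Per)) \hookrightarrow C^*(G_{k+1}(\Per)) \hookrightarrow C^*(G^s(\Per))$. Because $G^s(\Per) = \bigcup_k G_k(\Per)$ with the inductive limit topology, the union $\bigcup_k C_c(G_k(\Per))$ is a dense $*$-subalgebra of $C_c(G^s(\Per))$, and a routine argument on full groupoid $C^*$-norms (using openness and the standard $I$-norm / regular representation estimates from Renault) gives $C^*(G^s(\Per)) \cong \varinjlim C^*(G_k(\Per))$.

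Next, using the natural isomorphism $\beta_k : G_k(\Per) \xrightarrow{\cong} G_0(\Per)$ from the previous theorem, I would set $\alpha_k := (\beta_k)_* : C^*(G_k(\Per)) \xrightarrow{\cong} C^*(G_0(\Per))$ and conjugate the inclusion to obtain $\psi_k := \alpha_k \circ (\text{incl}) \circ \alpha_{k+1}^{-1} : C^*(G_0(\Per)) \to C^*(G_0(\Per))$. The crux of the argument is stationarity: one must check that $\beta_k$ can be chosen so that the diagram
\begin{equation*}
\begin{tikzcd}
G_k(\Per) \arrow[r, hook] \arrow[d, "\beta_k"'] & G_{k+1}(\Per) \arrow[d, "\beta_{k+1}"] \\
G_0(\Per) \arrow[r, hook] & G_1(\Per)
\end{tikzcd}
\end{equation*}
commutes up to a fixed isomorphism $G_1(\Per) \cong G_0(\Per)$. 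This is where the stationary nature of Wieler's inverse limit is used: the same map drives the step from $G_k$ to $G_{k+1}$ at every level, so the shift induced by a single application of $\phi$ (or its inverse on the inverse-limit coordinates) intertwines consecutive inclusions. Granting this compatibility, all $\psi_k$ agree with a single endomorphism $\psi : C^*(G_0(\Per)) \to C^*(G_0(\Per))$, and the telescope $\varinjlim(C^*(G_0(\Per)), \psi)$ recovers $C^*(G^s(\Per))$.

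Finally I would collect the properties of $C^*(G_0(\Per))$. The paragraph preceding the theorem already records, via \cite{CHR, HKS}, that $G_0(\Per)$ carries its subspace topology from $X^u(\Per) \times X^u(\Per)$, that $q : X^u(\Per) \to X^u(\Per)/\sim_0$ is a local homeomorphism, and that $C^*(G_0(\Per))$ is a Fell algebra with spectrum $X^u(\Per)/\sim_0$ and trivial Dixmier--Douady class, so these require nothing further. For compactness of the spectrum I would identify $X^u(\Per)/\sim_0$ with a Hausdorff quotient coming from the first stage of Wieler's inverse limit (the factor is a compact metric space, and $\sim_0$ is precisely the relation collapsing a $q$-local section onto the corresponding open set of that factor), whence compactness follows. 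For $C^*$-stability I would argue that each $\sim_0$-equivalence class is countably infinite -- this uses irreducibility of $(X,\varphi)$, which forces unstable sets of periodic points to be dense and hence infinite, together with the fact that $\sim_0$ refines $\sim_s$ by finite-depth ``local'' identifications, leaving classes still infinite. Triviality of the Dixmier--Douady invariant and constant infinite fibres then put $C^*(G_0(\Per))$ in the form $C(Y) \otimes \K$ locally and globally force $C^*(G_0(\Per)) \otimes \K \cong C^*(G_0(\Per))$. The main obstacle I anticipate is pinning down the stationarity compatibility in the second step: bookkeeping the natural isomorphisms $\beta_k$ so that they genuinely intertwine the inclusions (as opposed to merely being abstractly isomorphic pair-by-pair) is the step that uses the full strength of Wieler's axioms.
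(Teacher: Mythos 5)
Your overall architecture matches the paper's: the union $\bigcup_k G_k(\Per) = G^s(\Per)$ plus the isomorphisms $G_k(\Per) \cong G_0(\Per)$ give the stationary inductive limit, and the Fell-algebra properties are quoted from \cite{CHR, HKS}. (The stationarity compatibility you flag as the main obstacle is in fact immediate in the paper's setup: $G_k(\Per)$ is \emph{defined} as the pullback of $\sim_0$ along $\varphi^k$, so $\varphi^k \times \varphi^k$ tautologically intertwines the inclusions and $\psi = (\varphi^{-1}\times\varphi^{-1})^* \circ \iota$ works at every stage.) However, your argument for compactness of the spectrum contains a genuine error. You propose to ``identify $X^u(\Per)/{\sim_0}$ with a Hausdorff quotient coming from the first stage of Wieler's inverse limit,'' i.e.\ essentially with $Y$. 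This is only true when $g$ is a local homeomorphism. In general the map $r: X^u(\Per)/{\sim_0} \to Y$ is finite-to-one and a local embedding but \emph{not} injective, and the quotient is \emph{not} Hausdorff --- the $aab/ab$-solenoid, where the point $p \in Y$ splits into three non-Hausdorff points $aa, ab, ba$, is the standing counterexample, and indeed the non-Hausdorffness is forced whenever $K_0(C^*(G^s(\Per)))$ is finitely generated. The correct compactness argument does not go through $Y$ at all: one shows (Lemma \ref{KtwoExistLemma}) that there is a fixed $K_2$ such that every $\sim_0$-class meets the compact set $\varphi^{K_2}\bigl(\overline{X^u(\Per, \beta/2)}\bigr)$, so $X^u(\Per)/{\sim_0}$ is the image of a compact set under the quotient map $q$.

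The $C^*$-stability step is also not complete as written. Your premise that every $\sim_0$-class is infinite and discrete is correct (Proposition \ref{infiniteDiscreteSet}) and is indeed the mechanism behind stability, but the inference ``trivial Dixmier--Douady invariant and constant infinite fibres put $C^*(G_0(\Per))$ in the form $C(Y)\otimes\K$ locally and globally force stability'' does not follow: a $C^*$-algebra generated by finitely many stable ideals need not be stable (extensions of stable $C^*$-algebras by stable $C^*$-algebras can fail to be stable), and the local model $C_0(X_i)\otimes\K$ is only available over Hausdorff open subsets of a non-Hausdorff spectrum. The paper instead verifies the Hjelmborg--R\o rdam criterion directly: given $f \in C_c(G_0(\Per))$ with compact range and source supports, one uses the infinitude and discreteness of the $\sim_0$-classes to build $v \in C_c(G_0(\Per))$ supported on finitely many disjoint bisections with $v^*vf = f$ and $fv = 0$. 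You should either run that argument or find another route to stability; the purely spectral one you sketch does not close.
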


The case of the tiling spaces studied by Gon\c{c}alves \cite{Gon1, Gon2} fits within the framework of this result (see \cite[Proposition 5.7]{Gon1}) and as mentioned was one of our starting points. Our main result is also a generalization of results in \cite{RenAP, ThoAMS}. In the notation of Definition \ref{WielerAxioms}, the case when $g$ is a local homeomorphism is considered in \cite{RenAP, ThoAMS}. 

One consequence of our inductive limit is that it allows one to compute the $K$-theory of the stable algebra as a stationary inductive limit of abelian groups
\[ K_*(C^*(G^s(\Per))) \cong \varinjlim (K_*(C^*(G_0(\Per))), \psi_*). \]
The main tool we use for computing $K_0(C^*(G^s(\Per)))$ as an inductive limit is a natural family of traces on $C_c(G_0(\Per))$, parametrized by the quotient space $X^u(\Per)/{\sim_0}$.  These traces are densely defined on $C^*(G_0(\Per))$, but nevertheless induce homomorphisms $K_0(C^*(G_0(\Per))) \to \Z$ because $C_c(G_0(\Per))$ is closed under holomorphic functional calculus in $C^*(G_0(\Per))$. We prove general results about these traces which allow for the computation of this inductive limit provided that the family of traces separates the elements of $K_0(C^*(G_0(\Per)))$. We illustrate the techniques for doing this in the example of the ``$aab/ab$-solenoid", a specific example of a Williams solenoid constructed from a certain map on a wedge sum of two circles.  The reader should be aware that the $K$-theory of this example (and any one-dimensional Williams solenoid) is well-known see \cite{Yi} (and also \cite{ThoSol, ThoAMS}).  Our techniques are also applicable to any one-dimensional Williams solenoid.  We would be remiss not to mention that, for higher dimensional examples, the computation of the $K$-theory from the inductive limit becomes increasingly difficult. The reader can see such examples (even in dimension two) in \cite{Gon2, GonRamSol}. 

Nevertheless, in future work, the present authors will use the techniques from the present paper to compute the $K$-theory in a number of examples. In particular, the $K$-theory of the stable algebra of the $p/q$-solenoids studied in \cite{BurkePutnam} will be computed.

Another consequence of the inductive limit structure is that it leads to a new proof of the fact that $C^*(G^s(\Per))$ has finite nuclear dimension (in the special case when the stable sets are totally disconnected). This result holds in general (that is, without any assumption on the stable sets) and is the main result of \cite{DS}.

Our inductive limit structure also has consequences on the structure of general Smale spaces (again, without any assumption on the stable sets). In work in progress, the main result of the present paper is used by the present authors and Magnus Goffeng to study the existence of projections for general Smale space $C^*$-algebras. It is worth noting that these results seems to require the full generality of Theorem \ref{MainResult}. In other words, we do not believe they can be obtained by just considering examples such as tiling spaces \cite{Gon1, Gon2, GonRamSol, Min} and the case when $g$ is a local homeomorphism (see any of \cite{DGMW, RenAP, ThoAMS})

A summary of the structure of the paper is as follows. Section 1 contains background material on general Smale spaces along with the associated stable groupoid and stable $C^*$-algebra. Wieler's results on Smale spaces with totally disconnected stable sets are reviewed in Section 2. In addition to the general theory of Wieler solenoids, a number examples are introduced and an explicit description of the stable relation is given. Sections 3, 4 and 5 contain the main theoretical results of the paper. In these sections (respectively) the subgroupoid $G_0(\Per)$ and the inductive limit decomposition of $C^*(G^s(\Per))$ are considered, the associated compact, locally Hausdorff space $X^u(\Per)/{\sim_0}$ is studied and the structure of $C^*(G_0(\Per))$ is discussed. In Sections 6 we prove that $G_0(\Per)$ has dynamic asymptotic dimension zero and hence $C^*(G_0(\Per))$ has finite nuclear dimension. It also follows that $C_c(G_0(\Per))$ is closed under holomorphic functional calculus in $C^*(G_0(\Per))$. Sections 7 and 8 contain technical results which culminate in the construction of certain traces on $C_c(G_0(\Per))$. These traces are used in Section 9 to compute $K$-theory. The main example is the ``$aab/ab$"-solenoid but some general results (e.g., Theorem \ref{Theorem-TraceOnConnectedInductiveLimit}) and other examples (e.g., Examples \ref{KtheoryNsole} and \ref{KtheorySFT}) are also considered. In the appendix, the stable and unstable $C^*$-algebras associated to a Smale space via Putnam and Spielberg's method are shown to be $C^*$-stable. This result seems to be known to experts but we could not find a reference so we have included a proof. The key tool in the proof is the main result of \cite{HjeRor}.

\section*{Acknowledgments}

The authors thank Magnus Goffeng, Bram Mesland, Ian Putnam, Adam Rennie, Aidan Sims, Karen Strung, Michael Whittaker, Rufus Willett and Robert Yuncken for interesting and insightful discussions. We also thank the referee for a number of useful suggestions.

\section{Smale spaces} \label{Section-SmaleSpaces}

\begin{definition} \label{SmaSpaDef}
A Smale space is a metric space $(X, d)$ along with a homeomorphism $\varphi: X\rightarrow X$ with the following additional structure: there exists global constants $\epsilon_X>0$ and $0< \lambda < 1$ and a continuous map, called the bracket map, 
\[
[ \ \cdot \  , \ \cdot \ ] :\{(x,y) \in X \times X : d(x,y) \leq \epsilon_X\}\to X
\]
such that the following axioms hold
\begin{itemize}
\item[B1] $\left[ x, x \right] = x$;
\item[B2] $\left[x,[y, z] \right] = [x, z]$ when both sides are defined;
\item[B3] $\left[[x, y], z \right] = [x,z]$ when both sides are defined;
\item[B4] $\varphi[x, y] = [ \varphi(x), \varphi(y)]$ when both sides are defined;
\item[C1] For $x,y \in X$ such that $[x,y]=y$, $d(\varphi(x),\varphi(y)) \leq \lambda d(x,y)$;
\item[C2] For $x,y \in X$ such that $[x,y]=x$, $d(\varphi^{-1}(x),\varphi^{-1}(y)) \leq \lambda d(x,y)$.
\end{itemize}
We denote a Smale space simply by $(X,\varphi)$.
\end{definition}

Examples of Smale spaces and an introduction to their basic properties can be found in \cite{Put}. Throughout we assume that $X$ is infinite. For the most part, the Smale spaces considered in this paper will be of a special form, which we discuss in the next section. However, we required a few general facts.

\begin{definition}
Suppose $(X, \varphi)$ is a Smale space. If $x$ and $y$ are in $X$, then we write $x \sim_s y$ (respectively, $x\sim_u y$) if $\lim_{n \rightarrow \infty} d(\varphi^n(x), \varphi^n(y)) =0$ (respectively, $\lim_{n\rightarrow \infty}d(\varphi^{-n}(x) , \varphi^{-n}(y))=0$). The $s$ and $u$ stand for stable and unstable  respectively.
\end{definition}

The global stable and unstable set of a point $x \in X$ are defined to be
\[
X^s(x) = \{ y \in X \: | \: y \sim_s x \} \hbox{ and } X^u(x)=\{ y \in X \: | \: y \sim_u x \}.
\]
Given, $0< \epsilon \le \epsilon_X$, the local stable and unstable set of a point $x \in X$ are defined respectively to be
\begin{align}
X^s(x, \epsilon) & = \{ y \in X \: | \: [x, y ]= y \hbox{ and }d(x,y)< \epsilon \} \hbox{ and } \\
X^u(x, \epsilon) & = \{ y \in X \: | \: [y, x]= y \hbox{ and } d(x,y)< \epsilon \}.
\end{align} 
The following is a standard result, see for example \cite{Put, Rue}.
\begin{theorem} \label{wellKnownSmaleSpace} Suppose $(X, \varphi)$ is a Smale space and $x$, $y$ are in $X$ with $d(x,y)< \epsilon_X$. Then the following hold: for any $0< \epsilon \le \epsilon_X$
\begin{enumerate}
\item $X^s(x, \epsilon) \cap X^u(y, \epsilon)=\{[x, y]\}$ or is empty;
\item $\displaystyle X^s(x) = \bigcup_{n\in \N} \varphi^{-n}(X^s(\varphi^n(x), \epsilon))$;
\item $\displaystyle X^u(x) = \bigcup_{n\in \N} \varphi^n(X^u(\varphi^{-n}(x), \epsilon))$.
\end{enumerate}
\end{theorem}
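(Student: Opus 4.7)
The plan is to handle the three parts separately, with part (1) being a direct consequence of the algebraic axioms B1--B4 and parts (2) and (3) requiring the contractive axioms C1--C2 together with a local product structure argument.

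For part (1), I would argue uniqueness of an intersection point. Suppose $z \in X^s(x,\epsilon) \cap X^u(y,\epsilon)$. Then $[x,z] = z$ and $[z,y] = z$. Since $d(x,y) < \epsilon_X$, the point $[x,y]$ is defined, and applying B3 to the triple $(x,z,y)$ gives $[[x,z],y] = [x,y]$, which reduces via $[x,z]=z$ and $[z,y]=z$ to $z = [x,y]$. Thus the intersection is either empty or the singleton $\{[x,y]\}$. (In fact, when $d(x,y)$ is small enough that the bracket is defined and both conditions $[x,[x,y]]=[x,y]$ and $[[x,y],y]=[x,y]$ follow from B2/B3, the intersection equals $\{[x,y]\}$.)

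For part (2), I will prove both inclusions. For $\supseteq$, suppose $\varphi^n(y) \in X^s(\varphi^n(x),\epsilon)$, so $[\varphi^n(x),\varphi^n(y)] = \varphi^n(y)$ and $d(\varphi^n(x),\varphi^n(y)) < \epsilon \leq \epsilon_X$. Axiom B4 then gives $[\varphi^{n+1}(x),\varphi^{n+1}(y)] = \varphi^{n+1}(y)$, while C1 yields $d(\varphi^{n+1}(x),\varphi^{n+1}(y)) \leq \lambda d(\varphi^n(x),\varphi^n(y))$. By induction, $d(\varphi^{n+k}(x),\varphi^{n+k}(y)) \leq \lambda^k \epsilon \to 0$, so $y \sim_s x$. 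For $\subseteq$, fix $y \in X^s(x)$ and choose $n$ large enough that $d(\varphi^n(x),\varphi^n(y)) < \epsilon'$, where $\epsilon' \leq \epsilon$ is chosen small enough that the bracket is defined on the $\epsilon'$-ball and that $z := [\varphi^n(x),\varphi^n(y)]$ satisfies $d(\varphi^n(x),z) < \epsilon$ and $d(\varphi^n(y),z) < \epsilon$. Axioms B2 and B3 show $[\varphi^n(x),z] = z$ (so $z \in X^s(\varphi^n(x),\epsilon)$) and $[z,\varphi^n(y)] = z$ (so $z \in X^u(\varphi^n(y),\epsilon)$). I will then argue $z = \varphi^n(y)$: applying $\varphi^{-k}$ and using B4 gives $\varphi^{-k}(z) = [\varphi^{n-k}(x),\varphi^{n-k}(y)]$, and iterating C2 yields $d(\varphi^{-k}(z),\varphi^{n-k}(y)) \leq \lambda^k d(z,\varphi^n(y))$; combining this with the stable equivalence $d(\varphi^{n-k}(x),\varphi^{n-k}(y)) \to 0$ as $k\to -\infty$ (equivalently, with a further increase of $n$) forces the local unstable coordinate of $\varphi^n(y)$ relative to $\varphi^n(x)$ to be trivial, i.e., $\varphi^n(y) = z \in X^s(\varphi^n(x),\epsilon)$. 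Part (3) is proved by the symmetric argument, exchanging the roles of $\varphi$ and $\varphi^{-1}$, of C1 and C2, and of stable and unstable sets.

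The main obstacle is the $\subseteq$ direction of parts (2) and (3): the bare hypothesis $d(\varphi^n(x),\varphi^n(y)) \to 0$ gives no direct information about which side of the bracket $\varphi^n(y)$ lies on relative to $\varphi^n(x)$. The needed input is the local product structure (canonical coordinates) implicit in the definition of a Smale space, namely that in a small enough neighborhood every nearby point decomposes uniquely via the bracket into stable and unstable components, and that a point stably equivalent to $x$ must have trivial unstable component eventually under iteration. Once this structural fact is established, the rest is a routine application of B4 together with the contraction estimates C1 and C2.
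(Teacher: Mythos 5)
The paper does not actually prove Theorem \ref{wellKnownSmaleSpace}; it is quoted as a standard fact with references to Putnam and Ruelle, so your proposal has to be judged on its own terms. Part (1) and the $\supseteq$ inclusions of (2) and (3) are correct as written: the identity $z=[x,y]$ for $z\in X^s(x,\epsilon)\cap X^u(y,\epsilon)$ follows from B3 exactly as you say, and the forward contraction via C1 (with B4 propagating the bracket identity) gives $\supseteq$ in (2).

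The gap is in the $\subseteq$ direction of (2) (and symmetrically (3)), and you have in effect flagged it yourself: the assertion that ``a point stably equivalent to $x$ must have trivial unstable component eventually under iteration'' is precisely the content of the inclusion, not an available input. Moreover, the estimate you display, $d(\varphi^{-k}(z),\varphi^{n-k}(y))\le \lambda^k d(z,\varphi^n(y))$ for $k\ge 0$, runs the contraction in the unproductive direction: it only re-proves that $z\sim_u \varphi^n(y)$ and cannot force $z=\varphi^n(y)$. The standard way to close the argument is to compare time $n$ with \emph{later} times. Choose $\delta$ so small (using continuity of the bracket, B1, and compactness of $X$) that $d(u,v)<\delta$ implies $d([u,v],v)<\eta$ for a fixed $\eta\le\epsilon$, and choose $n$ with $d(\varphi^m(x),\varphi^m(y))<\delta$ for all $m\ge n$. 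Set $z_m=[\varphi^{n+m}(x),\varphi^{n+m}(y)]$; then $\varphi(z_m)=z_{m+1}$ by B4, $[z_m,\varphi^{n+m}(y)]=z_m$ by B3, and C2 applied $m$ times gives
\[
d(z_0,\varphi^{n}(y))\;\le\;\lambda^{m}\,d(z_m,\varphi^{n+m}(y))\;\le\;\lambda^{m}\eta\;\longrightarrow\;0 ,
\]
the key point being that the right-hand factor is \emph{uniformly bounded} in $m$. Hence $z_0=\varphi^n(y)$, i.e.\ $[\varphi^n(x),\varphi^n(y)]=\varphi^n(y)$ and $\varphi^n(y)\in X^s(\varphi^n(x),\epsilon)$. (This is the same ``$\lambda^m$ times a bounded quantity'' trick the paper itself uses in the proof of Lemma \ref{stableEquivIs}.) Your parenthetical ``with a further increase of $n$'' points toward this, but as written the decisive inequality is missing, so the proposal as it stands does not establish the hard inclusion.
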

 A Smale space is mixing if for each pair of non-empty open sets $U$, $V$, there exists $N$ such that $\varphi^n(U)\cap V \neq \emptyset$ for all $n\geq N$. When $(X, \varphi)$ is mixing, $X^u(x)$ and $X^s(x)$ are each dense as subsets of $X$. However, one can use this theorem to give $X^u(x)$ and $X^s(x)$ locally compact, Hausdorff topologies. The details of this construction are discussed in for example \cite[Theorem 2.10]{Kil}.

Following \cite{PutSpi}, we construct the stable groupoid of $(X, \varphi)$. Let $\Per$ denote a finite $\varphi$-invariant set of periodic points of $\varphi$ and define
\[
X^u(\Per)=\{ x \in X \: | \: x \sim_u p \hbox{ for some }p \in \Per \}
\]
and
\[
G^s(\Per) := \{ (x, y) \in X^u(\Per) \times X^u(\Per) \: | \: x \sim_s y \}.
\] 
Still following \cite{PutSpi}, a topology is defined on $G^s(\Per)$ by constructing a neighborhood base. Suppose $(x,y)\in G^s(\Per)$. Then there exists $k\in \N$ such that 
\[
\varphi^k(x) \in X^s\left(\varphi^k(y), \frac{\epsilon_X}{2}\right).
\]
Since $\varphi$ is continuous there exists $\delta>0$ such that 
\[
\varphi^k( X^u(y, \delta)) \subseteq X^u\left(\varphi^k(y), \frac{\epsilon_X}{2}\right).
\]
Using this data, we define a function $h_{(x,y,\delta)} : X^u(y, \delta) \rightarrow X^u(x, \epsilon_X)$ via
\[ 
z \mapsto \varphi^{-k} ( [ \varphi^k(z) , \varphi^k(x) ]) 
\]
and have the following result from \cite{PutSpi}:
\begin{theorem} \label{etaleTopThm}
The function $h=h_{(x,y,\delta)}$ is a homeomorphism onto its image and (by letting $x$, $y$, and $\delta$ vary) the sets
\[
V(x,y,h, \delta) := \{ ( h(z), z ) \: | \: z \in X^u(y, \delta) \} 
\]
form a neighborhood base for an \'{e}tale topology on the groupoid $G^s(\Per)$. Moreover, the groupoid $G^s(\Per)$ is amenable, second countable, locally compact, and Hausdorff.
\end{theorem}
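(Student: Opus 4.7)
The plan is to verify each assertion in turn, beginning with the elementary properties of $h$ and culminating with the delicate amenability claim. To see that $h = h_{(x,y,\delta)}$ is a homeomorphism onto its image, the key is to use axioms B1--B4 to show the image lies in $X^u(x, \epsilon_X)$ and to produce an explicit continuous inverse. Axiom B3 applied inside $\varphi^k$ yields
\[ [\varphi^k(h(z)), \varphi^k(x)] = [[\varphi^k(z), \varphi^k(x)], \varphi^k(x)] = [\varphi^k(z), \varphi^k(x)] = \varphi^k(h(z)), \]
and pulling $\varphi^{-k}$ through via B4 gives $[h(z), x] = h(z)$, so $h(z) \in X^u(x, \epsilon_X)$ after shrinking $\delta$ to control the distance $d(h(z),x)$. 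Continuity of $h$ is immediate from continuity of the bracket and $\varphi^{\pm k}$; for the inverse, I would observe that $\varphi^k(z) \in X^u(\varphi^k(y), \cdot)$ implies $[\varphi^k(z), \varphi^k(y)] = \varphi^k(z)$, and then applying B3 gives $[\varphi^k(h(z)), \varphi^k(y)] = [\varphi^k(z), \varphi^k(y)] = \varphi^k(z)$, producing the continuous formula $z = \varphi^{-k}([\varphi^k(h(z)), \varphi^k(y)])$.

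Next I would verify that the sets $V(x,y,h,\delta)$ form a base for an \'{e}tale topology. The two axioms of a base reduce to: every pair $(x,y) \in G^s(\Per)$ lies in some such set, which uses the existence of $k$ with $\varphi^k(x) \in X^s(\varphi^k(y), \epsilon_X/2)$ (a consequence of $x \sim_s y$ together with Theorem \ref{wellKnownSmaleSpace}(2)); and that two overlapping basic sets admit a common refinement, obtained by choosing a uniform $k$ and shrinking $\delta$ using continuity of the bracket. By construction, the source map $V(x,y,h,\delta) \to X^u(y,\delta)$, $(h(z), z) \mapsto z$, is a homeomorphism, and the range map is a homeomorphism onto an open subset of $X^u(x, \epsilon_X)$; this is exactly the \'{e}tale condition.

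The Hausdorff, second countable, and locally compact properties of $G^s(\Per)$ follow from the corresponding properties of the local unstable sets, which are locally compact, second countable, and Hausdorff by standard Smale space theory. Since each basic set is homeomorphic to a local unstable set, these properties transfer to $G^s(\Per)$. Global Hausdorffness uses uniqueness of bracket values within $\epsilon_X$: two distinct basic sections through the same groupoid element must coincide on a neighborhood, by the bracket axioms and continuity of $[\cdot,\cdot]$.

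The main obstacle is amenability. This is not immediate from the construction and requires either invoking \cite{PutSpi} directly or producing an independent argument. My plan would be to use a Markov partition of $(X, \varphi)$ to establish that $G^s(\Per)$ is Morita equivalent to the \'{e}tale groupoid of forward-tail equivalence on an associated shift of finite type; the latter is an AF groupoid and hence amenable, and amenability is a Morita invariant among locally compact groupoids. This step is the one that genuinely leaves the realm of the bracket axioms; the remaining properties are relatively direct consequences of B1--B4 together with the contraction estimates C1, C2.
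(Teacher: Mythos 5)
First, note that the paper does not prove this statement at all: it is quoted verbatim from \cite{PutSpi}, so there is no in-paper argument to compare against. Your treatment of the elementary assertions is essentially correct and matches the standard construction: the bracket computations using B3 and B4 showing $[h(z),x]=h(z)$ and producing the explicit inverse $w \mapsto \varphi^{-k}([\varphi^k(w),\varphi^k(y)])$ are right (note also that $h(y)=x$ since $[\varphi^k(y),\varphi^k(x)]=\varphi^k(x)$, which is what lets you control $d(h(z),x)$ by shrinking $\delta$), and the base/\'{e}tale/Hausdorff/second countable/locally compact claims all reduce, as you say, to properties of the local unstable sets. For Hausdorffness it is cleaner to simply observe that the topology generated by the sets $V(x,y,h,\delta)$ refines the subspace topology from $X \times X$, which is metrizable.

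The genuine gap is in your amenability argument. You propose that $G^s(\Per)$ is Morita equivalent to the forward-tail equivalence groupoid of a shift of finite type, which is AF. This cannot be true in general: Morita equivalence of groupoids induces an isomorphism of the $K$-theory of the associated $C^*$-algebras, and AF algebras have trivial $K_1$, whereas Example \ref{KtheoryNsole} of this paper computes $K_1(C^*(G^s(\Per))) \cong \Z$ for the $n$-solenoid (and the $aab/ab$-solenoid likewise has $K_1 \cong \Z$). So the claimed Morita equivalence would contradict results proved later in the paper. What Putnam and Spielberg actually do is use Bowen's theorem to obtain a finite-to-one, surjective factor map $\pi: (\Sigma,\sigma) \to (X,\varphi)$ from an irreducible shift of finite type, and then transfer amenability from the (AF, hence amenable) stable groupoid of $\Sigma$ to that of $X$ via the permanence properties of groupoid amenability under such finite-to-one factor maps; this is a strictly weaker relationship than Morita equivalence and does not preserve $K$-theory. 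Your instinct that amenability is the step requiring machinery beyond the bracket axioms is correct, but the specific mechanism you propose would fail.
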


\begin{example}[An example of an open set in $G^s(\Per)$] \label{exOpenSet}
One way to construct an open set in $G^s(P)$ is to take $x$, $x' \in X^u(\Per)$ such that
\[
x' \in X^s\left(x, \frac{\epsilon_X}{2}\right)
\] 
and form
\[
V:= \left\{ ( [ y , x' ], y) \mid y \in X^s\left(x, \frac{\epsilon_X}{2}\right) \right\}.
\]
$V$ is an open set (this is the special case $k=0$ discussed in the paragraphs before this example). In fact, it is an open neighborhood of the point $(x', x) \in G^s(\Per)$. 

A further special case occurs when $x = x'$. These open sets gives the topology on the unit space of $G^s(\Per)$, which is $X^u(\Per)$ (the topology on this space is the one discussed just after Theorem \ref{wellKnownSmaleSpace}).
\end{example}

\begin{definition}
Let $C^*(G^s(\Per))$ denote the $C^*$-algebra associated to the \'{e}tale groupoid $G^s(\Per)$ (the choice of completion does not affect the $C^*$-algebra because the groupoid is amenable.) This $C^*$-algebra is called the stable algebra of $(X, \varphi)$. 
\end{definition}

A Smale space is irreducible if for each pair of non-empty open sets $U$, $V$, there exists $n$ such that $\varphi^n(U)\cap V \neq \emptyset$. We will for the most part only considered the case of mixing Smale spaces. (Recall that a Smale space is mixing if for each pair of non-empty open sets $U$, $V$, there exists $N$ such that $\varphi^n(U)\cap V \neq \emptyset$ for all $n\geq N$.) 

To generalize results from the mixing to the irreducible case one uses Smale's decomposition theorem \cite{Rue}. The $C^*$-algebraic implication of this theorem is that if $(X, \varphi)$ is an irreducible Smale space, then its stable $C^*$-algebra is isomorphic to a finite direct sum of stable $C^*$-algebras associated to mixing Smale spaces. For more on this construction see for example \cite[Section 2.5]{Kil}. This direct sum decomposition implies that in many of our results (in particular, Theorem \ref{inductiveLimit}) we need only prove the mixing case.

\section{Wieler Solenoids} \label{WielerSection}
The Smale spaces considered in this paper are all solenoids (i.e., obtained via an inverse limit construction). In this section, we review Wieler's work \cite{Wie}, which (among other results) implies that a natural class of Smale spaces are actually solenoids in a quite explicit way, see Theorem \ref{WielerTheorem} for the precise statement.
\begin{definition}[Wieler's Axioms] \label{WielerAxioms}
Let $(Y, d_Y)$ be a compact metric space, and $g: Y \rightarrow Y$ be a continuous surjective map. Then, $(Y, d_Y, g)$ satisfies Wieler's axioms if there exists constants $\beta>0$, $K\in \N^+$, and $0< \gamma < 1$ such that the following hold:
\begin{description}
\item[Axiom 1] If $x,y\in Y$ satisfy $d_Y(x,y)\le \beta$, then 
\[
d_Y(g^K(x),g^K(y))\leq\gamma^K d_Y(g^{2K}(x),g^{2K}(y)).
\]
\item[Axiom 2] For all $x\in V$ and $0<\epsilon\le \beta$
\[
g^K(B(g^K(x),\epsilon))\subseteq g^{2K}(B(x,\gamma\epsilon)).
\] 
\end{description}
\end{definition}

\begin{remark} \label{notHomeo}
We will assume that $Y$ infinite. This implies that $g$ is not a homeomorphism. This can be proved using \cite[Lemma 2.7]{DGMW} and the fact that if a compact metric space admits a positively expanding homeomorphism then it must be finite but we omit the details of the proof.
\end{remark}

\begin{definition}
Suppose $(Y,d_Y, g)$ satisfies Wieler's axioms. Then on the inverse limit space 
\[
X:= \varprojlim (Y, g) = \{ (y_n)_{n\in \N} = (y_0, y_1, y_2, \ldots ) \: | \: g(y_{i+1})=y_i \hbox{ for each }i\ge0 \}
\]
we let $\varphi: X \rightarrow X$ be defined via
\[
\varphi(x_0, x_1, x_2, \ldots ) = (g(x_0), g(x_1), g(x_2), \ldots) = (g(x_0), x_0, x_1, \ldots ).
\] 
We take the metric $d_X$ on $X$ defined via
\[
d_X((x_n)_{n\in \N}, (y_n)_{n\in \N} ) = \sum_{i=0}^K \gamma^{i} d^{\prime}_X( \varphi^{i}(x_n)_{n\in \N}, \varphi^{i}(y_n)_{n\in \N}),
\]
where $d^{\prime}_X ( (x_n)_{n\in \N}, (y_n)_{n\in \N} )= \sup_{n\in \N} (\gamma^n d_Y(x_n, y_n))$.  We refer to $(X, d_X, \varphi)$ as a Wieler solenoid. 
\end{definition}
\begin{theorem}\cite[Theorems A and B on page 4]{Wie} \label{WielerTheorem}
If $(Y, d_Y, g)$ satisfies Wieler's axioms, then the associated Wieler solenoid $(X, d_X, \varphi)$ is a Smale space with totally disconnected stable sets. The constants in Wieler's definition give Smale space constants: $\epsilon_X=\frac{\beta}{2}$ and $\lambda=\gamma$. Moreover, if $\xn=(x_n)_{n\in\N} \in X$ and $0< \epsilon \le \frac{\beta}{2}$, the locally stable and unstable sets of $(X, d_X, \varphi)$ are given by
\[
X^s( \xn , \epsilon)= \{ \yn=(y_n)_{n\in \N} \: | \: y_m= x_m \hbox{ for }0\le m \le K-1 \hbox{ and } d_X(\xn,\yn) \le \epsilon \}
\]
and
\[
X^u(\xn, \epsilon) =\{  \yn=(y_n)_{n\in \N} \: | \: d_Y(x_n,y_n)< \epsilon \: \forall n \hbox{ and } d_X(\xn,\yn) \le \epsilon \}
\]
respectively. 

Conversely, if $(X, \varphi)$ is any irreducible Smale space with totally disconnected stable sets, then there exists $(Y, d_Y, g)$ satisfying Wieler's axioms such that $(X, \varphi)$ is conjugate to the Wieler solenoid associated to $(Y, d_Y, g)$.
\end{theorem}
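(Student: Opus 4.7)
The plan is to treat the two directions separately: the forward direction by constructing an explicit bracket map on the inverse limit $X$, and the converse (which will be the main obstacle) by a Markov partition argument on $(X, \varphi)$.

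For the forward direction, given $\xn = (x_n)_{n \in \N}$ and $\yn = (y_n)_{n \in \N}$ in $X$ with $d_X(\xn, \yn) \le \beta/2$, I would define $\zn = [\xn, \yn]$ coordinate by coordinate: set $z_m := x_m$ for $0 \le m \le K-1$, so that $\zn$ automatically fits the prospective local stable set description. For $m \ge K$, construct $z_m$ inductively so that $g(z_{m+1}) = z_m$ and $d_Y(z_m, y_m)$ is small. Wieler's Axiom 2 is exactly the ingredient that produces these preimages: starting from the fact that $y_0 = g^K(y_K)$ is close to $x_0$, Axiom 2 yields a point $z_K$ near $y_K$ whose $K$-th iterate equals $x_0$, and iterating at each further level with contraction factor $\gamma$ makes the $z_m$ uniquely determined and close to the $y_m$.

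Once the bracket is defined, axioms B1 and B4 are immediate from the coordinate formula together with the shift form of $\varphi$, and B2, B3 reduce to uniqueness of the inductive preimage construction. Contraction axioms C1 and C2 follow from Wieler's Axiom 1 together with the weighted form of $d_X$: under $\varphi$ the indices shift so that the dominant terms in $d_X$ contract by $\gamma = \lambda$. The explicit descriptions of $X^s(\xn, \epsilon)$ and $X^u(\xn, \epsilon)$ are then read off from the bracket formula and Theorem \ref{wellKnownSmaleSpace}. For total disconnectedness of the local stable sets, observe that after fixing the first $K$ coordinates, the remaining freedom in $X^s(\xn, \epsilon)$ is a sequence of $g$-preimages forced by Axiom 2 to lie in contracting balls, so the local stable set is homeomorphic to a projective limit of finite sets and is hence totally disconnected.

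The converse direction is the main obstacle and requires a global construction. Starting from an irreducible Smale space $(X, \varphi)$ with totally disconnected stable sets, I would choose a Markov partition $\{R_1, \ldots, R_N\}$ of $(X, \varphi)$ by small rectangles of the form $R_i \cong X^s(p_i, \delta) \times X^u(p_i, \delta)$ obtained via the bracket, and define $Y := X/{\sim}$ where $x \sim x'$ when both points lie in a common rectangle and share the same local unstable coordinate. Let $g : Y \to Y$ be the map induced by $\varphi$ on the quotient, with a suitable quotient metric. Total disconnectedness of the stable sets is essential here to ensure $Y$ is a well-behaved compact metric space. Verifying Wieler's Axioms 1 and 2 for $(Y, g)$ amounts to transferring the Smale space contractions C1 and C2 through the quotient, with the integer $K$ chosen so that $\varphi^K$ refines the Markov partition appropriately. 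The conjugacy $(X, \varphi) \cong \varprojlim(Y, g)$ is then exhibited by the coding map $x \mapsto (\pi(\varphi^{-n}(x)))_{n \in \N}$ where $\pi : X \to Y$ is the quotient projection. The delicate points are producing a Markov partition finely enough adapted to give Axioms 1 and 2 with a single common $K$ and $\gamma$, and verifying that the coding map is a well-defined homeomorphism intertwining $\varphi$ with the shift.
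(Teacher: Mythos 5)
This theorem is not proved in the paper at all: it is quoted directly from Wieler (\cite[Theorems A and B]{Wie}), and the paper's ``proof'' is the citation. So there is no internal argument to compare your proposal against; what you have written is an outline of how one might reprove Wieler's theorem from scratch. As an outline it is directionally consistent with what Wieler actually does --- the forward direction is proved by building the bracket coordinatewise, using Axiom 2 to manufacture $g$-preimages and Axiom 1 to force uniqueness and the contraction estimates, and the converse is proved by collapsing local stable sets (equivalently, identifying points of a rectangle with the same unstable coordinate) to produce $(Y,g)$ and then exhibiting the coding map as a conjugacy. Total disconnectedness enters exactly where you say it does, both for the profinite structure of $X^s(\xn,\epsilon)$ and for making the quotient defining $Y$ well behaved.

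That said, as a proof it has real gaps rather than merely suppressed routine details. In the forward direction, the existence, uniqueness, and continuity of the bracket is the technical heart (Wieler's Lemmas 3.1--3.3): Axiom 2 only provides preimages at the level of $g^{2K}$ given surjectivity onto balls at level $K$, so the inductive construction of the $z_m$ is more delicate than ``Axiom 2 yields a point $z_K$ near $y_K$''; and the uniqueness you attribute to the contraction in the preimage construction in fact comes from Axiom 1 applied to pairs of candidate brackets. In the converse direction, essentially everything is deferred: the relation ``lie in a common rectangle and share the same unstable coordinate'' must be shown to be an equivalence relation (transitivity across overlapping rectangles is where the Markov/refinement property is really used), the quotient must be shown to be a compact metric space on which $\varphi$ descends, and Axioms 1 and 2 must be verified with a single uniform $K$ and $\gamma$ --- this is the bulk of Wieler's paper and is not something the sketch engages with. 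If the intent is to use this theorem in the present paper, the correct move is simply to cite \cite{Wie}; if the intent is to actually prove it, the converse direction as written is a statement of the strategy, not an argument.
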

\begin{remark}
Wieler's axioms and the previous theorem should be compared with work of Williams \cite{Wil}. An important difference between the two is that Wieler's are purely metric space theoretic.
\end{remark}

If $g: Y \to Y$ satisfies Wieler's Axioms, then $g$ is finite-to-one by \cite[Lemma 3.4]{Wie}. In addition, given $(Y, g)$ satisfying Wieler's axioms, \cite[Theorem A on page 2068]{Wie} states that the associated Wieler solenoid is irreducible if and only if $(Y, g)$ is non-wandering and $g$ has a dense orbit in $Y$.

We will occasionally consider the special case in which $g$ is also a local homeomorphism.  This case was studied in detail in \cite{DGMW}.  In particular it was shown that if $g$ satisfies Wieler's axioms and either $g$ is open or $g^K$ is locally expanding, then $g$ is a local homeomorphism \cite[Lemmas 2.7 and 2.8]{DGMW}.

\subsection{Examples of Wieler solenoids}
A few examples of Wieler solenoids are discussed in this section. The main example we will consider in this paper is the $aab/ab$-solenoid. As such, the other examples are only discussed briefly. Nevertheless, we hope to convince the reader that many interest dynamical systems fit within Wieler's framework.

\begin{example}[Subshifts of finite type]
In \cite{Wie} (see \cite{WiePhD} for the details), Wieler shows that any irreducible two-sided subshift of finite type can be obtained by applying her construction to a suitable one-sided subshift of finite type. In this example, $g$ is the shift map, which is a local homeomorphism.  It is worth noting that Williams' solenoid construction does not apply to this example.
\end{example}

\begin{example}[$n$-solenoid]
Consider the unit circle $S^1 \subseteq \C$ with the arc length metric, rescaled so that the total circumference is $1$.  For a fixed integer $n > 1$, define
$g: S^1 \rightarrow S^1$ via $z \mapsto z^n$.
Then $(S^1, g)$ satisfies Wieler's axioms with $K = 1$, $\gamma = 1/n$, $\beta = 1/2n^2$. Hence the associated inverse limit is a Smale space. Note that $g$ is a local homeomorphism.
\end{example}

\begin{example}[$aab/ab$-solenoid]
We shall consider the following example throughout the paper.  Let $Y = S^1 \vee S^1$ be the wedge sum of two circles. Consider the following diagram:
\newpage
\begin{figure}[h]
\begin{tikzpicture}
\draw (0,0) circle [radius=3];
\draw (0,-1) circle [radius=2];
\draw[fill] (0,-3) circle [radius=0.1];
\draw (0,.8) -- (0,1.2);
\draw (-2.42487,1.4) -- (-2.77128,1.6);
\draw (2.42487,1.4) -- (2.77128,1.6);
\node [below] at (0,-3.1) {$p$};
\node [above] at (0,3.1) {$a$};
\node [left] at (-2.77128,-1.6) {$a$};
\node [right] at (2.77128,-1.6) {$b$};
\node [right] at (-1.9,-1) {$a$};
\node [left] at (1.9,-1) {$b$};
\end{tikzpicture}
\caption{$aab/ab$ pre-solenoid}
\label{Figure-aab/ab-PreSolenoid}
\end{figure}

The map $g: Y \rightarrow Y$ is defined as follows. We consider the outer circle to be the $a$-circle and the inner circle to be the $b$-circle. Each line segment labelled with $a$ is mapped onto the $a$-circle (i.e., the outer circle); while, each line segment labelled with $b$ is mapped onto the $b$-circle (i.e., the inner circle). The mapping is done in an orientation-preserving way, provided we've oriented both circles the same way, say clockwise. Note that $g$ is not a local homeomorphism in this example. For more details on this specific example and one-solenoids in general, see \cite{ThoSol, WilOneDim, Yi}.
\end{example}

More generally, any Williams solenoid \cite{Wil} can be studied using the constructions in the present paper (both of the two previous examples are Williams solenoids). However, there are examples of Wieler solenoids that do not fit into Williams framework (e.g., the subshifts of finite type example above).  

\begin{example}[Tilings]
As we mentioned in the introduction, one of our starting points for this paper was the work of Gon\c{c}alves \cite{Gon1, Gon2} on $C^*$-algebras associated to tiling spaces. Results in \cite{AndPut} link tiling space theory with Smale space theory. As such, there is more than one $C^*$-algebra associated to a tiling space. Gon\c{c}alves studies the stable algebra in \cite{Gon1, Gon2}, while the unstable algebra is studied in \cite{AndPut}. For more on these different algebras in the tiling space case, see \cite{GonRamSol} and references therein. In the present paper, we are interested in the stable algebra. In particular, see \cite[Section 4]{AndPut} for the construction of the relevant inverse limit in this case. Computations of the $K$-theory groups of many interesting tiling examples can be found in \cite{Gon2, GonRamSol}. 
\end{example}

\begin{example}[Gasket example]
See example 3 on pages 2070-2071 of \cite{Wie} for an interesting example of a map $g: Y \to Y$ that satisfies Wieler's axioms.  Here, the space $Y$ is formed by gluing six copies of the Sierpinski gasket together. This construction is also discussed in greater detail in \cite[Section 4.3]{WiePhD}.
\end{example}
 
\subsection{The stable relation}
The stable equivalence relation in the case of a Wieler solenoid has a particularly nice description in terms of the inverse limit structure.
\begin{lemma} \label{stableEquivIs}
Suppose $(Y, d_Y, g)$ satisfies Wieler's axioms, $(X, \varphi)$ is the associated Wieler solenoid, and $\Per$ is a finite $\varphi$-invariant set in $X$. Let $(x_n)_{n \in \N}$ and $(y_n)_{n\in \N}$ denote elements of $X^u({\bf P})$. Then, $(x_n)_{n\in \N} \sim_s (y_n)_{n\in \N}$ if and only if there exists $k\in \N$ such that $g^k(x_0)=g^k(y_0)$.
\end{lemma}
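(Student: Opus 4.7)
The plan is to use the explicit description of local stable sets from Theorem \ref{WielerTheorem} together with the fact that $\varphi^N$ acts on an inverse limit sequence by prepending $g$-iterates. More precisely, for $(x_n) \in X$ and $N \ge 0$, one has
\[
(\varphi^N(\xn))_m = \begin{cases} g^{N-m}(x_0) & \text{if } 0 \le m < N, \\ x_{m-N} & \text{if } m \ge N. \end{cases}
\]
This formula is the backbone of the argument, so I would record it at the start.

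For the forward direction, assume $\xn \sim_s \yn$. By Theorem \ref{wellKnownSmaleSpace}(2), there exists an integer $N$ (which I can freely enlarge, so assume $N \ge K$) such that $\varphi^N(\yn) \in X^s(\varphi^N(\xn), \beta/2)$. Invoking Wieler's description of the local stable set, the $m$-th coordinates of $\varphi^N(\xn)$ and $\varphi^N(\yn)$ must coincide for $0 \le m \le K-1$. Using the coordinate formula, this translates to $g^{N-m}(x_0) = g^{N-m}(y_0)$ for $0 \le m \le K-1$. Taking $m = 0$ yields $g^N(x_0) = g^N(y_0)$, so $k := N$ works.

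For the converse, assume $g^k(x_0) = g^k(y_0)$. Applying $g$ repeatedly preserves this equality, so $g^{N-m}(x_0) = g^{N-m}(y_0)$ for all $N - m \ge k$. Using the coordinate formula, the two sequences $\varphi^{N+i}(\xn)$ and $\varphi^{N+i}(\yn)$ agree on every coordinate $m$ with $m \le N+i-k$. Hence in the auxiliary metric
\[
d'_X(\varphi^{N+i}(\xn), \varphi^{N+i}(\yn)) = \sup_n \gamma^n\, d_Y\bigl((\varphi^{N+i}(\xn))_n, (\varphi^{N+i}(\yn))_n\bigr),
\]
only indices $n > N+i-k$ contribute, and those are bounded crudely by $\gamma^{N+i-k+1} \cdot \operatorname{diam}(Y)$ since $Y$ is compact. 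Summing over $i = 0, \ldots, K$ gives
\[
d_X(\varphi^N(\xn), \varphi^N(\yn)) \le \operatorname{diam}(Y)\,\gamma^{N-k+1} \sum_{i=0}^K \gamma^{2i},
\]
which tends to $0$ as $N \to \infty$, so $\xn \sim_s \yn$.

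There is no real obstacle here; the only thing to be careful about is the indexing under $\varphi^N$ (which coordinates are forced to agree and which are not), and the fact that one needs to take $N \ge K$ in the forward direction in order for the coordinate-matching condition of Wieler's local stable set description to translate into an equality of $g$-iterates of $x_0$ and $y_0$. The hypothesis $\xn, \yn \in X^u(\Per)$ plays no explicit role in the equivalence itself beyond putting the pair into the domain where $\sim_s$ is considered.
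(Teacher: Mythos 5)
Your proof is correct, and it takes a genuinely different route from the paper's in the one direction the paper actually proves, while also supplying the direction the paper omits. For the forward implication, the paper works directly from the metric and Wieler's Axiom 1: stable equivalence forces $d_Y(g^n(x_0), g^n(y_0)) < \beta$ for all large $n$, and then an induction with Axiom 1 yields $d_Y(g^{N+K}(x_0), g^{N+K}(y_0)) \le \gamma^{mK}\beta$ for every $m$, hence equality. You instead combine Theorem \ref{wellKnownSmaleSpace}(2) with the explicit description of $X^s(\xn, \epsilon)$ in Theorem \ref{WielerTheorem} (agreement of the first $K$ coordinates), which in effect outsources that Axiom 1 induction to Wieler's structure theorem; both routes are legitimate, the paper's being self-contained at the level of the axioms and yours being shorter given the quoted description of local stable sets. (Your enlargement of $N$ is harmless since $\varphi(X^s(x,\epsilon)) \subseteq X^s(\varphi(x),\epsilon)$ by axiom C1, and in fact only the $m=0$ coordinate is needed.) For the converse, the paper explicitly writes ``we only prove one of the two directions,'' so your estimate
\[
d_X(\varphi^N(\xn), \varphi^N(\yn)) \le \operatorname{diam}(Y)\,\gamma^{N-k+1}\sum_{i=0}^K \gamma^{2i} \longrightarrow 0
\]
fills in the missing half; the indexing is right (coordinates $m \le N+i-k$ of $\varphi^{N+i}(\xn)$ and $\varphi^{N+i}(\yn)$ coincide, and the remaining terms of the supremum defining $d'_X$ are bounded by $\gamma^{N+i-k+1}\operatorname{diam}(Y)$, using compactness of $Y$).
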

\begin{proof}
We only prove one of the two directions. Assume that $(x_n)_{n\in \N} \sim_s (y_n)_{n\in \N}$. Then there exists $N\in \N$ such that for all $n\ge N$, 
\[
d_Y(g^n(x_0), g^n(y_0))< \beta.
\]
Using this inequality, Wieler's first Axiom and a short induction argument, we have that for any positive integer $m$, 
\[
d_Y(g^{N+K}(x_0), g^{N+K}(y_0)) \le \gamma^{mK} d_Y(g^{N+(m+1)K}(x_0), g^{N+(m+1)K}(y_0))
\]
Since $\gamma^{mK}$ tends to zero as $m$ tends to $+\infty$ and 
\[
d_Y(g^{N+(m+1)K}(x_0), g^{N+(m+1)K}(y_0))< \beta
\]
we have that $g^{N+K}(x_0)=g^{N+K}(y_0)$ as required.
\end{proof}

\section{The Subrelation $\sim_0$ and Inductive Limit structure}
As above, suppose $(Y, d_Y, g)$ satisfies Wieler's axioms, $(X, \varphi)$ is the associated Wieler solenoid (which is a Smale space),  and $\Per$ is a finite $\varphi$-invariant set of points of $X$.  Elements in $X$ are denoted by 
\[ \xn = (x_0, x_1, x_2, \ldots ) = (x_n)_{n\in \N} \]
where $x_n\in Y$. It is important to note that we never consider $X^u(\Per)$ with the subspace topology it inherits from $X$.  Instead, we identify $X^u(\Per)$ with the diagonal $\{(\xn, \xn)  \: | \: \xn \in X^u(\Per) \} \subseteq G^s(\Per)$, and give $X^u(\Per)$ the subspace topology inherited from $G^s(\Per)$. We assume that $(X, \varphi)$ is mixing (see the discussion just before Section \ref{WielerSection} for more on how to generalize to the irreducible case).

Consider the map
\[ \pi_0: X^u(\Per) \to Y,\qquad \pi_0(x_0, x_1, x_2, \ldots) = x_0. \]
It follows from the structure of the local unstable sets given in Theorem \ref{WielerTheorem} that $\pi_0$ is continuous and locally injective. Since $X^u(\Per)$ is locally compact and $Y$ is Hausdorff, $\pi_0$ is locally an embedding. Moreover, it follows from the fact that $X^u(\Per)$ is dense as a subset of $X$ and $g$ is surjective that $\pi_0$ is also surjective. 

Define an equivalence relation on $X^u(\Per)$ by $\xn \sim_{\text{naive}} \yn$ if and only if $\pi_0(\xn) = \pi_0(\yn)$.  By Lemma \ref{stableEquivIs}, $\xn \sim_{\text{naive}} \yn$ implies $\xn \sim_s \yn$.  Let
\[ G_{\text{naive}}(\Per) = \left\{ (\xn, \yn) \in X^u({\Per})\times X^u({\Per}) \: | \: \pi_0(\xn) = \pi_0(\yn) \right\} \]
be the corresponding subgroupoid of $G^s(\Per)$.  The following example illustrates a fundamental problem with $G_{\text{naive}}(\Per)$: it is not open in $G^s(\Per)$ and therefore it is not \'{e}tale.

\begin{example} \label{NotOpenRel}
Consider the $aab/ab$-solenoid example. We take $\Per$ to be the set containing the single fixed point $\mathbf{p} = (p, p, p, \ldots)$.  Here, $X^u(\Per)$ is homeomorphic to the real line, and is pictured below:

\newpage

\begin{figure}[h]
\begin{tikzpicture}
\draw [thick] (-2.5,0) -- (9.5,0);
\draw (-2,-.2) -- (-2, .2);
\draw (-1,-.2) -- (-1, .2);
\draw (0,-.2) -- (0, .2);
\draw (1,-.2) -- (1, .2);
\draw (2,-.2) -- (2, .2);
\draw (3,-.2) -- (3, .2);
\draw (4,-.2) -- (4, .2);
\draw (5,-.2) -- (5, .2);
\draw (6,-.2) -- (6, .2);
\draw (7,-.2) -- (7, .2);
\draw (8,-.2) -- (8, .2);
\draw (9,-.2) -- (9, .2);
\node at (-2.5,.25){$\ldots$};
\node at (-1.5,.25){$a$};
\node at (-.5,.25){$b$};
\node at (0,-.5){$\mathbf{p}$};
\node at (.5,.25){$a$};
\node at (1, -.5){$\mathbf{q}$};
\node at (1.5,.25){$a$};
\node at (2, -.5){$\mathbf{r}$};
\node at (2.5,.25){$b$};
\node at (3.5,.25){$a$};
\node at (4.5,.25){$a$};
\node at (5.5,.25){$b$};
\node at (6.5,.25){$a$};
\node at (7.5,.25){$b$};
\node at (8.5,.25){$a$};
\node at (9.5,.25){$\ldots$};
\end{tikzpicture}
\caption{$X^u(\Per)$ for the $aab/ab$-solenoid.}
\label{Figure-aab/ab-X^u(P)}
\end{figure}

Intervals labelled with $a$ (resp. $b$) are mapped by $\pi_0$ to the outer (resp. inner) circle in $Y$. Identifying the endpoints of these intervals as $\Z$, we describe $\sim_{\text{naive}}$ as follows: All integer points are equivalent (e.g., $\mathbf{p}$, $\mathbf{q}$, and $\mathbf{r}$ are equivalent) and non-integer points $\xn$ and $\yn$ are equivalent if and only if they are in intervals with the same label and $\xn-\yn \in \Z$. That $G_{\text{naive}}(\Per)$ is not open can be seen by noting that neither $(\mathbf{p}, \mathbf{q})$ nor $(\mathbf{p}, \mathbf{r})$ has an open neighborhood in $G^s(\Per)$ contained in $G_{\text{naive}}(\Per)$.  The point $(\mathbf{p}, \mathbf{r})$ is isolated in $G_{\text{naive}}(\Per)$, whereas an open neighborhood of $(\mathbf{p}, \mathbf{q})$ in $G_{\text{naive}}(\Per)$ is homeomorphic to a half-open, half-closed interval.  Neither is open in $G^s(\Per)$, because $G^s(\Per)$ is \'{e}tale and locally homeomorphic to $X^u(\Per) \cong \R$.
\end{example}

Based on the previous example, we need to refine this naive equivalence relation  so as to obtain an open (hence \'etale) subgroupoid of $G^s(\Per)$. Essentially, we do this by excluding pairs such as $(\mathbf{p}, \mathbf{q})$ and $(\mathbf{p}, \mathbf{r})$ in the example above. This process is similar in spirit to the collaring construction done in \cite{AndPut}, though it is not the same. The process in \cite{AndPut} always outputs a Hausdorff space, while ours does not.  In the $aab/ab$-solenoid example, the collaring construction in \cite{AndPut} introduces duplicates of the $a$ tile.

Before giving the general definition of our relation, some lemmas are required.  Recall that $K, \beta, \gamma$ are the constants from Wieler's axioms. The next lemma follows  directly from the definition of the metric on $X$ so we omit the proof. (The proof is easiest to see in the case when one has $K=1$, but holds in general).

\begin{lemma} \label{KzeroLemma}
There exists $K_0 \in \N$ such that 
\begin{enumerate}
\item $K_0 \ge K$;
\item if $\xn$ and $\yn$ are in $X$ and $x_i=y_i$ for all $i\le K_0$, then $d(\xn, \yn)< \frac{\beta}{4}$.
\end{enumerate}
\end{lemma}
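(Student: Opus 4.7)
The plan is a direct calculation from the formula defining $d_X$, exploiting how coordinate agreement propagates under the shift $\varphi$. First I would record that $M := \mathrm{diam}(Y, d_Y)$ is finite by compactness of $Y$, and tentatively fix $K_0 \ge K$, calibrating its size at the end.

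The key observation is how agreement on the first $K_0+1$ coordinates of $\xn$ and $\yn$ propagates to their iterates under $\varphi$. The definition of $\varphi$ on the inverse limit gives $\varphi^i(\xn)_n = x_{n-i}$ for $n \ge i$ and $\varphi^i(\xn)_n = g^{i-n}(x_0)$ for $n < i$, with the analogous formulas for $\yn$. Because $x_0 = y_0$ and $x_j = y_j$ for all $j \le K_0$, the sequences $\varphi^i(\xn)$ and $\varphi^i(\yn)$ agree at every coordinate $n \le K_0 + i$, while for $n > K_0 + i$ we only have the diameter bound $d_Y(\varphi^i(\xn)_n, \varphi^i(\yn)_n) \le M$. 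Combined with the factor $\gamma^n$ in $d'_X$, this yields
\[ d'_X(\varphi^i(\xn), \varphi^i(\yn)) \le \gamma^{K_0 + i + 1} M. \]

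Feeding this into the finite sum $d_X(\xn, \yn) = \sum_{i=0}^K \gamma^i d'_X(\varphi^i(\xn), \varphi^i(\yn))$ produces a bound of the form $M\gamma^{K_0+1}/(1-\gamma^2)$, which is $O(\gamma^{K_0})$. Since $\gamma < 1$, this tends to zero as $K_0 \to \infty$, so enlarging $K_0$ if necessary makes $d_X(\xn, \yn) < \beta/4$, giving (2), while (1) is already built in. There is no serious obstacle here; the only point needing care is correctly tracking how $\varphi^i$ shifts coordinates in the inverse limit and which coordinates are thereby forced to coincide.
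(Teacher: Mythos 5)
Your computation is correct and is exactly the direct argument the paper has in mind: the authors omit the proof, remarking only that the lemma ``follows directly from the definition of the metric on $X$.'' Your tracking of how $\varphi^i$ shifts coordinates, the resulting bound $d'_X(\varphi^i(\xn),\varphi^i(\yn)) \le \gamma^{K_0+i+1}M$, and the geometric-sum estimate $d_X(\xn,\yn) = O(\gamma^{K_0})$ supply precisely the omitted details.
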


\begin{lemma} \label{EpsilonYLemma}
Suppose $K_0$ is a fixed natural number satisfying the conclusion of the previous lemma. Then there exists $0<\epsilon_Y< \frac{\beta}{4}$ such that for any $\xn\in X$, 
\[
\varphi^{K_0} (X^u(\xn, \epsilon_Y )) \subseteq X^u\left(\varphi^{K_0}(\xn), \frac{\beta}{4} \right)
\]
We emphasize that $\epsilon_Y$ is independent of $\xn$; it does depend on $K_0$.
\end{lemma}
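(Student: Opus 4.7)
The plan is to exploit uniform continuity of the iterates $\varphi^{j}$ for $0 \le j \le K_0$ on the compact metric space $X$, together with axiom B4 of the bracket map, so that membership in a local unstable set is preserved under $\varphi^{K_0}$.

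First, I would observe that because $(X, d_X)$ is compact and $\varphi$ is a homeomorphism, each iterate $\varphi^{j}$ with $0 \le j \le K_0$ is uniformly continuous. Hence there exists $\epsilon_Y$ with $0 < \epsilon_Y < \frac{\beta}{4}$ such that
\[
d_X(\xn, \yn) < \epsilon_Y \quad \Longrightarrow \quad d_X(\varphi^{j}(\xn), \varphi^{j}(\yn)) < \frac{\beta}{4} \text{ for every } 0 \le j \le K_0.
\]
This choice depends only on $K_0$, $\beta$, and $\varphi$, and in particular is independent of $\xn$, as required by the statement.

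Next, suppose $\yn \in X^u(\xn, \epsilon_Y)$, so that $[\yn, \xn] = \yn$ and $d_X(\xn, \yn) < \epsilon_Y$. By the choice of $\epsilon_Y$ above, $d_X(\varphi^{j}(\xn), \varphi^{j}(\yn)) < \frac{\beta}{4} < \frac{\beta}{2} = \epsilon_X$ for every $0 \le j \le K_0$, so that each intermediate bracket $[\varphi^{j}(\yn), \varphi^{j}(\xn)]$ is in fact defined. Iterated application of axiom B4 then yields
\[
\varphi^{K_0}(\yn) \;=\; \varphi^{K_0}([\yn, \xn]) \;=\; [\varphi^{K_0}(\yn), \varphi^{K_0}(\xn)].
\]
Combined with $d_X(\varphi^{K_0}(\xn), \varphi^{K_0}(\yn)) < \frac{\beta}{4}$, this shows $\varphi^{K_0}(\yn) \in X^u(\varphi^{K_0}(\xn), \frac{\beta}{4})$, which is the required inclusion.

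The main (and really only) point requiring care is the legitimacy of iterating B4: one needs every intermediate pair $(\varphi^{j}(\yn), \varphi^{j}(\xn))$ to remain within $\epsilon_X$, and this is exactly what the uniform continuity of each $\varphi^{j}$, $0 \le j \le K_0$, furnishes. Once $\epsilon_Y$ is chosen to control all these iterates simultaneously, the rest of the argument is immediate from the definition of local unstable sets and the bracket axioms.
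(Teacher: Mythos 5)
Your proof is correct, but it takes a genuinely different route from the paper's. You argue entirely at the level of the abstract Smale space: uniform continuity of the finitely many iterates $\varphi^j$, $0 \le j \le K_0$, on the compact space $X$ gives a uniform $\epsilon_Y$, and an induction on axiom B4 (legitimate because each intermediate pair $(\varphi^j(\yn), \varphi^j(\xn))$ stays within $\epsilon_X = \beta/2$) shows that the defining condition $[\yn,\xn]=\yn$ of the local unstable set propagates to $[\varphi^{K_0}(\yn), \varphi^{K_0}(\xn)] = \varphi^{K_0}(\yn)$. The paper instead works in the inverse-limit coordinates: it first chooses $\epsilon$ so that coordinatewise $d_Y$-closeness forces $d_X$-closeness, then uses uniform continuity of $g$ on the compact space $Y$ to control the finitely many new coordinates $g^i(x_0)$ versus $g^i(y_0)$ that $\varphi^{K_0}$ prepends, and concludes via the explicit description of $X^u(\xn,\epsilon)$ from Theorem \ref{WielerTheorem}. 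Your version is more general --- it proves the statement for an arbitrary Smale space and never invokes Wieler's characterization of the local unstable sets --- while the paper's version keeps everything in the coordinate picture that the surrounding lemmas (e.g.\ Lemma \ref{hxnLemma}) continue to use. Both are complete; the only point worth flagging is that you are implicitly using the equivalence, recorded in Theorem \ref{WielerTheorem}, between the bracket definition of $X^u(\cdot,\cdot)$ and Wieler's coordinatewise description, so that your conclusion matches the form in which the lemma is applied later.
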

\begin{proof}
By the definition of the metric on $X$, there exists $0< \epsilon < \frac{\beta}{4}$ such that if $\xn$ and $\yn$ satisfy $d_Y(x_n, y_n) < \epsilon$ for all $n \in \N$, then $d_X(\xn, \yn)< \frac{\beta}{4}$. Moreover, since $g$ is continuous and $Y$ is compact, $g$ is uniformly continuous. Hence, there exists $0< \delta \leq \epsilon$, such that for each $1\leq i \leq K_0$,
$d_Y(g^i(w), g^i(z)) < \epsilon$ whenever $d_Y(w, z) < \delta$. 

Taking $\epsilon_Y=\delta$, we have the required properties. Fix $\xn \in X$. Then, 
\[
X^u(\xn, \epsilon_Y) = \{ \yn \in X \mid d_Y(x_n,y_n)< \epsilon_Y \: \forall n \hbox{ and } d_X(\xn,\yn) \le \epsilon_Y \}.
\]
Let $\yn \in X^u(\xn, \epsilon_Y)$. Then, $d_Y(x_n, y_n)< \delta$ for each $n \in \N$. We must show $\varphi^{K_0}( \yn ) \in X^u\left(\varphi^{K_0}(\xn), \frac{\beta}{4} \right)$. By definition, 
 \[
 \varphi^{K_0}(\yn) = (g^{K_0}(y_0), g^{K_0-1}(y_0) , \dots, g(y_0), y_0, y_1, \ldots )
 \]
 and likewise
 \[
 \varphi^{K_0}(\xn) = (g^{K_0}(x_0), g^{K_0-1}(x_0) , \dots, g(x_0), x_0, x_1, \ldots )
 \]
By the construction of $\epsilon_Y$ (note: $\epsilon_Y=\delta$) and the fact that $d_Y(x_0, y_0) < \delta$, we have that, for each $1\leq i \leq K_0$, $d_Y(g^i(x_0), g^i(y_0)) < \epsilon < \frac{\beta}{4}$. Furthermore, since $\delta< \frac{\beta}{4}$, we have that $d_Y(x_n, y_n)< \frac{\beta}{4}$ for each $n\in \N$. Thus, to complete the proof, we need only show that $d(\varphi^{K_0}(\xn), \varphi^{K_0}(\yn))< \frac{\beta}{4}$. This follows from the first line of the proof and the fact that $\epsilon_Y<\delta \leq \epsilon$.
\end{proof}

We now fix $K_0$ and $\epsilon_Y$ satisfying that conditions of the previous two lemmas. Let $\pi_0 : X^u(\Per) \rightarrow Y$ denote the map defined via
\[ 
\xn = (x_n)_{n\in \N} \mapsto x_0.
\]
We can now state our main definition.
\begin{definition} \label{tildeZeroDef}
Suppose $\xn$ and $\yn$ are in $X^u(\Per)$. Then $\xn \sim_0 \yn$ if 
\begin{enumerate}
\item $\pi_0(\xn)=\pi_0(\yn)$ (i.e., $x_0 = y_0$);
\item there exists $0< \delta_{\xn} < \epsilon_Y$ and open set $U \subseteq X^u(\yn, \epsilon_Y)$ such that
\[ 
\pi_0 ( X^u(\xn, \delta_{\xn})) =\pi_0 (U). 
\]
\end{enumerate}
Let $G_0(\Per) = \{ (\xn, \yn) \: | \: \xn \sim_0 \yn \}$.
\end{definition}

Before proceeding with the general theory, let's consider what $\sim_0$ is in our main examples.

\begin{example} \label{simZeroLocalHomeo}
In the special case when $g:Y \rightarrow Y$ is a local homeomorphism, Theorem 3.12 of \cite{DGMW} implies that $\pi_0: X^u(\Per) \rightarrow Y$ is a covering map. It then follows that (1) implies (2), so we have $\xn \sim_0 \yn$ if and only if $\pi_0(\xn)=\pi_0(\yn)$.  That is, $\sim_0$ is the naive equivalence $\sim_{\text{naive}}$ in this case.
\end{example}

\begin{example}
For the $aab/ab$-solenoid, the relation $\sim_0$ can be described using the following diagram:

\begin{figure}[h]
\begin{tikzpicture}
\draw [thick] (-2.5,0) -- (9.5,0);
\draw (-2,-.2) -- (-2, .2);
\draw (-1,-.2) -- (-1, .2);
\draw (0,-.2) -- (0, .2);
\draw (1,-.2) -- (1, .2);
\draw (2,-.2) -- (2, .2);
\draw (3,-.2) -- (3, .2);
\draw (4,-.2) -- (4, .2);
\draw (5,-.2) -- (5, .2);
\draw (6,-.2) -- (6, .2);
\draw (7,-.2) -- (7, .2);
\draw (8,-.2) -- (8, .2);
\draw (9,-.2) -- (9, .2);
\node at (-2.5,.25){$\ldots$};
\node at (-1.5,.25){$a$};
\node at (-.5,.25){$b$};
\node at (0,-.5){$\mathbf{p}$};
\node at (.5,.25){$a$};
\node at (1, -.5){$\mathbf{q}$};
\node at (1.5,.25){$a$};
\node at (2, -.5){$\mathbf{r}$};
\node at (2.5,.25){$b$};
\node at (3, -.5){$\mathbf{p'}$};
\node at (3.5,.25){$a$};
\node at (4.5,.25){$a$};
\node at (5.5,.25){$b$};
\node at (6.5,.25){$a$};
\node at (7.5,.25){$b$};
\node at (8.5,.25){$a$};
\node at (9.5,.25){$\ldots$};
\end{tikzpicture}\caption{The $\sim_0$ relation for the $aab/ab$ solenoid.}
\label{Figure-aab/ab-Tilde_0}
\end{figure}

For the non-integer points, the relation is the same as in Example \ref{NotOpenRel}. However, two integer points are equivalent if and only if the intervals to the left and right are labelled the same. For example, $\mathbf{p}$, $\mathbf{q}$, and $\mathbf{r}$ are all inequivalent, but $\mathbf{p}$ and $\mathbf{p'}$ are equivalent.  In this example, the integer points split into three different equivalence classes, namely the equivalence classes of $\mathbf{p}$, $\mathbf{q}$, and $\mathbf{r}$.
\end{example}

Returning to the general case, if $(\xn, \yn) \in G_0(\Per)$, then we let $h_{\xn}: X^u(\xn, \delta_{\xn}) \rightarrow X^u(\yn, \epsilon_Y)$ be defined via
\[
\wn \mapsto \varphi^{-K_0} ( [ \varphi^{K_0}(\wn), \varphi^{K_0}(\yn) ]). 
\]
We note that $h_{\xn}$ is well-defined by the conditions that $K_0$ and $\epsilon_Y$ satisfy and the construction of the \'{e}tale topology on $G^s(\Per)$ discussed in and just before Theorem \ref{etaleTopThm}. Moreover, it is a homeomorphism onto its image.

\begin{lemma} \label{hxnLemma}
If $\xn \sim_0 \yn$ with $\delta_{\xn}$ satisfying the conditions of Definition \ref{tildeZeroDef}, then for any $w \in X^u(\xn , \delta_{\xn})$, $\pi_0(\wn)= \pi_0( h_{\xn}(\wn))$. Moreover, if $V \subseteq X^u(\xn, \delta_{\xn})$ is open, then $\pi_0(V) = \pi_0(h_{\xn}(V))$; we note that $h_{\xn}(V)$ is open. 
\end{lemma}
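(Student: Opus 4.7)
The plan is to reduce everything to the pointwise statement $\pi_0(h_{\xn}(\wn)) = \pi_0(\wn)$ for every $\wn \in X^u(\xn, \delta_{\xn})$. Once that is in hand, the ``moreover'' clause is essentially automatic: $h_{\xn}$ was already noted to be a homeomorphism onto its image (a consequence of the topology set up in Theorem \ref{etaleTopThm}), so $h_{\xn}(V)$ is open in $X^u(\yn, \epsilon_Y)$, and the set equality $\pi_0(V) = \pi_0(h_{\xn}(V))$ follows by applying the pointwise equality at every point of $V$.

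For the pointwise statement, the strategy is to use condition (2) of Definition \ref{tildeZeroDef} to produce a candidate $\mathbf{u}$ with $\pi_0(\mathbf{u}) = \pi_0(\wn)$, and then show by rigidity of the Smale space bracket that $\mathbf{u}$ must in fact equal $h_{\xn}(\wn)$. Concretely, fix $\wn \in X^u(\xn, \delta_{\xn})$. Since $\pi_0(\wn) \in \pi_0(X^u(\xn, \delta_{\xn})) = \pi_0(U)$, there exists $\mathbf{u} \in U \subseteq X^u(\yn, \epsilon_Y)$ with $u_0 = w_0$. The key observation is that for any $\mathbf{z} \in X$ the prepending description of $\varphi$ yields $\varphi^{K_0}(\mathbf{z})_i = g^{K_0-i}(z_0)$ for $0 \leq i \leq K_0$, so the single equality $u_0 = w_0$ already forces $\varphi^{K_0}(\mathbf{u})$ and $\varphi^{K_0}(\wn)$ to agree in their first $K_0 + 1$ coordinates. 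Lemma \ref{KzeroLemma} then gives $d_X(\varphi^{K_0}(\mathbf{u}), \varphi^{K_0}(\wn)) < \beta/4$, and since $K \leq K_0$ the description of local stable sets in Theorem \ref{WielerTheorem} places $\varphi^{K_0}(\mathbf{u})$ in $X^s(\varphi^{K_0}(\wn), \beta/4)$. Meanwhile Lemma \ref{EpsilonYLemma} applied to $\mathbf{u} \in X^u(\yn, \epsilon_Y)$ gives $\varphi^{K_0}(\mathbf{u}) \in X^u(\varphi^{K_0}(\yn), \beta/4)$.

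A short triangle inequality argument (bounding $d_X(\varphi^{K_0}(\wn), \varphi^{K_0}(\xn))$ by Lemma \ref{EpsilonYLemma} and $d_X(\varphi^{K_0}(\xn), \varphi^{K_0}(\yn))$ by Lemma \ref{KzeroLemma} via $x_0 = y_0$) shows $d_X(\varphi^{K_0}(\wn), \varphi^{K_0}(\yn)) < \beta/2 = \epsilon_X$, so $[\varphi^{K_0}(\wn), \varphi^{K_0}(\yn)]$ is defined. Uniqueness of the bracket from Theorem \ref{wellKnownSmaleSpace}(1) applied at radius $\beta/4$ now forces
\[ \varphi^{K_0}(\mathbf{u}) = [\varphi^{K_0}(\wn), \varphi^{K_0}(\yn)], \]
and applying $\varphi^{-K_0}$ yields $\mathbf{u} = h_{\xn}(\wn)$. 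In particular $\pi_0(h_{\xn}(\wn)) = u_0 = w_0 = \pi_0(\wn)$, as required.

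The main conceptual obstacle is bridging the set-theoretic hypothesis in condition (2) of Definition \ref{tildeZeroDef} with the explicit geometric formula defining $h_{\xn}$. The bridge is provided by the rigidity of the Smale space bracket: once one exhibits any point of $X^u(\yn, \epsilon_Y)$ whose $\varphi^{K_0}$-image lands in both the correct local stable ball around $\varphi^{K_0}(\wn)$ and local unstable ball around $\varphi^{K_0}(\yn)$, uniqueness forces that point to coincide with $h_{\xn}(\wn)$. The careful choices of $K_0$ and $\epsilon_Y$ from Lemmas \ref{KzeroLemma} and \ref{EpsilonYLemma} are engineered precisely so that every radius involved fits inside $\epsilon_X = \beta/2$ and the uniqueness applies.
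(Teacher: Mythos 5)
Your proof is correct and follows essentially the same route as the paper's: pick a point $\mathbf{u} \in U$ with $u_0 = w_0$, show its image under $\varphi^{K_0}$ lies in $X^s(\varphi^{K_0}(\wn), \beta/4) \cap X^u(\varphi^{K_0}(\yn), \beta/4)$, and use uniqueness of the bracket from Theorem \ref{wellKnownSmaleSpace}(1) to conclude $\mathbf{u} = h_{\xn}(\wn)$. The only difference is that you spell out the coordinate-agreement and triangle-inequality checks that the paper leaves implicit.
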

\begin{proof}
Suppose $\wn \in X^u(\xn, \delta_{\xn})$. Then there exists $\zn \in U \subseteq X^u(\yn, \epsilon_Y)$ such that 
\[
w_0 = \pi_0(\wn) = \pi_0(\zn) = z_0.
\]
On the one hand, by Lemma \ref{EpsilonYLemma}, $\varphi^{K_0}(\zn) \in X^u\left(\varphi^{K_0}(\yn) , \frac{\beta}{4}\right)$; on the other, by Theorem \ref{WielerTheorem} and Lemma \ref{KzeroLemma}, $\varphi^{K_0}(\zn) \in X^s\left(\varphi^{K_0}(\wn), \frac{\beta}{4}\right)$. Thus, using Theorem \ref{wellKnownSmaleSpace}, 
\[ 
\varphi^{K_0}(\zn) \in X^s\left(\varphi^{K_0}(\wn), \frac{\beta}{4} \right) \cap X^u\left(\varphi^{K_0}(\yn) , \frac{\beta}{4}\right)= \{ [ \varphi^{K_0}(\wn) , \varphi^{K_0}(\yn) ] \}
\]
Hence, $\varphi^{K_0}(\zn) = [ \varphi^{K_0}(\wn) , \varphi^{K_0}(\yn) ]$. It follows that $h_{\xn}(\wn)=\zn$ and that 
\[
\pi_0(h_{\xn}(\wn))=\pi_0(\zn)=z_0=w_0
\] 
as required. 

The ``Moreover" part of the lemma follows directly from the first statement.
\end{proof}
\begin{corollary} \label{UisAcutally}
If $\xn \sim_0 \yn$, then the set $h_{\xn}(X^u(\xn, \delta_{\xn}))$ satisfies the requirements of the set $U$ in the definition of $\sim_0$.
\end{corollary}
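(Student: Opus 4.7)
Let me set $U := h_{\xn}(X^u(\xn, \delta_{\xn}))$, so I have a concrete candidate for the open set required in Definition \ref{tildeZeroDef}. What remains is to check three things: (i) $U \subseteq X^u(\yn, \epsilon_Y)$; (ii) $U$ is open in $X^u(\Per)$; and (iii) $\pi_0(X^u(\xn, \delta_{\xn})) = \pi_0(U)$.

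For (i), this is immediate from the very definition of $h_{\xn}$, whose codomain is specified to be $X^u(\yn, \epsilon_Y)$. That this codomain assignment is legitimate — namely that the bracket $[\varphi^{K_0}(\wn), \varphi^{K_0}(\yn)]$ is defined and that $\varphi^{-K_0}$ of it actually lands in $X^u(\yn, \epsilon_Y)$ — is precisely what the choices of $K_0$ and $\epsilon_Y$ in Lemmas \ref{KzeroLemma} and \ref{EpsilonYLemma} were arranged to guarantee, together with the construction of the \'etale topology in Theorem \ref{etaleTopThm}.

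For (iii), this is simply the ``Moreover'' part of Lemma \ref{hxnLemma} applied to the open set $V = X^u(\xn, \delta_{\xn})$: the lemma asserts $\pi_0(V) = \pi_0(h_{\xn}(V))$, which is exactly the desired equality of images. So no further work is required for this point.

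The only step with any real content is (ii). My plan here is to invoke the \'etale structure of $G^s(\Per)$ as in Theorem \ref{etaleTopThm}. Since $h_{\xn}$ is of the form $h_{(\yn,\xn,\delta_{\xn})}$ in the notation of that theorem, the set
\[
V(\yn, \xn, h_{\xn}, \delta_{\xn}) = \{ (h_{\xn}(\wn), \wn) \mid \wn \in X^u(\xn, \delta_{\xn}) \}
\]
is a basic open neighborhood in $G^s(\Per)$, and the range map restricts to a homeomorphism from this neighborhood onto $U$, since $G^s(\Per)$ is \'etale (so the range map is a local homeomorphism) and this neighborhood is an open bisection. Hence $U$ is open in the unit space $X^u(\Per)$ as required. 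I anticipate this verification is the main (but still quite short) obstacle, since it requires unwinding the identification of $X^u(\Per)$ with the diagonal in $G^s(\Per)$ and carefully tracking which topology on $X^u(\Per)$ is being used — recalling the warning at the start of Section 3 that this is \emph{not} the subspace topology inherited from $X$. Once (i), (ii), (iii) are in place, $U$ meets both conditions of Definition \ref{tildeZeroDef}, completing the proof.
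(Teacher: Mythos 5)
Your proof is correct and follows the same route the paper intends: the corollary is stated without proof precisely because points (i) and (iii) are the definition of $h_{\xn}$ and the ``Moreover'' part of Lemma \ref{hxnLemma}, respectively, while (ii) is the openness assertion already noted in that lemma. Your bisection/range-map justification of (ii) is a correct unwinding of why $h_{\xn}$ carries open sets to open sets in the \'etale topology, so nothing is missing.
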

\begin{proposition}
$G_0(\Per)$ is an equivalence relation.
\end{proposition}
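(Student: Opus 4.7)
The plan is to verify the three axioms of an equivalence relation directly, exploiting the homeomorphisms $h_{\xn}$ from Definition \ref{tildeZeroDef} together with the identity $\pi_0 \circ h_{\xn} = \pi_0$ supplied by Lemma \ref{hxnLemma}. Reflexivity is immediate: given $\xn \in X^u(\Per)$, pick any $\delta \in (0, \epsilon_Y)$ and set $U = X^u(\xn, \delta)$ in Definition \ref{tildeZeroDef}(2).

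For symmetry, assume $\xn \sim_0 \yn$ with witness $(\delta_{\xn}, h_{\xn}\colon X^u(\xn, \delta_{\xn}) \to X^u(\yn, \epsilon_Y))$. The first thing to observe (implicit throughout what follows) is that $h_{\xn}(\xn) = \yn$: since $x_0 = y_0$, the description of local stable sets in Theorem \ref{WielerTheorem} forces $\varphi^{K_0}(\xn) \in X^s(\varphi^{K_0}(\yn), \epsilon_X)$, so $[\varphi^{K_0}(\xn), \varphi^{K_0}(\yn)] = \varphi^{K_0}(\yn)$ and hence $h_{\xn}(\xn) = \yn$. By Corollary \ref{UisAcutally}, the image $h_{\xn}(X^u(\xn, \delta_{\xn}))$ is open in $X^u(\yn, \epsilon_Y)$, so it is in particular an open neighborhood of $\yn$ there. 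Pick $\delta_{\yn} \in (0, \epsilon_Y)$ with $X^u(\yn, \delta_{\yn}) \subseteq h_{\xn}(X^u(\xn, \delta_{\xn}))$ and let $V = h_{\xn}^{-1}(X^u(\yn, \delta_{\yn}))$. Then $V$ is open in $X^u(\xn, \delta_{\xn})$, hence in $X^u(\xn, \epsilon_Y)$, and Lemma \ref{hxnLemma} gives $\pi_0(V) = \pi_0(X^u(\yn, \delta_{\yn}))$, proving $\yn \sim_0 \xn$.

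For transitivity, suppose also $\yn \sim_0 \zn$ with witness $(\delta_{\yn}, h_{\yn}\colon X^u(\yn, \delta_{\yn}) \to X^u(\zn, \epsilon_Y))$. Continuity of $h_{\xn}$ at $\xn$, together with $h_{\xn}(\xn) = \yn \in X^u(\yn, \delta_{\yn})$, supplies a $\delta \in (0, \delta_{\xn})$ with $h_{\xn}(X^u(\xn, \delta)) \subseteq X^u(\yn, \delta_{\yn})$. Setting $W = h_{\yn}(h_{\xn}(X^u(\xn, \delta)))$, which is open in $X^u(\zn, \epsilon_Y)$ as a composition of homeomorphisms applied to an open set, and applying Lemma \ref{hxnLemma} first to $h_{\xn}$ on $X^u(\xn, \delta)$ and then to $h_{\yn}$ on $h_{\xn}(X^u(\xn, \delta))$ yields $\pi_0(X^u(\xn, \delta)) = \pi_0(W)$, so $\xn \sim_0 \zn$. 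The only nontrivial ingredient is the identity $h_{\xn}(\xn) = \yn$, which is what allows Corollary \ref{UisAcutally} (in the symmetry step) and the composition $h_{\yn} \circ h_{\xn}$ (in the transitivity step) to produce open sets around the intended targets; once this is in hand, every remaining step is either openness of a homeomorphic image or a direct application of Lemma \ref{hxnLemma}.
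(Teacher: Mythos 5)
Your proof is correct and follows essentially the same route as the paper's: reflexivity is immediate, symmetry is obtained by inverting $h_{\xn}$ on a small local unstable set around $\yn$ inside the open image $h_{\xn}(X^u(\xn,\delta_{\xn}))$, and transitivity by composing $h_{\yn}\circ h_{\xn}$ after shrinking $\delta$, with Lemma \ref{hxnLemma} supplying $\pi_0\circ h = \pi_0$ at each stage. Your explicit verification that $h_{\xn}(\xn)=\yn$ is a nice touch — the paper leaves this implicit in Corollary \ref{UisAcutally}'s assertion that $\yn\in U$ — and the rest matches the published argument step for step.
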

\begin{proof}
It is clear that $\sim_0$ is reflexive. We will show that it is also symmetric and transitive.

Suppose $\xn \sim_0 \yn$. By assumption, $x_0=y_0$. Moreover, by Lemma \ref{KzeroLemma}, 
\[
d(\varphi^{K_0}(\xn), \varphi^{K_0}(\yn)) < \frac{\beta}{4}
\]
and by Corollary \ref{UisAcutally}, the open set $U=h_{\xn}(X^u(\xn, \delta_{\xn})) \subseteq X^u(\yn, \epsilon_Y)$ satisfies the conditions required in Definition \ref{tildeZeroDef}. In particular, $\yn \in U$. Hence, there exists $0<\delta_{\yn}< \epsilon_Y$ such that $X^u(\yn, \delta_{\yn}) \subseteq U$. Since $h_{\xn}$ is a homeomorphism onto its image and $X^u(\yn, \delta_{\yn})$ is contained in its image, we have that $V:=h_{\xn}^{-1}( X^u( \yn, \delta_{\yn}))$ is an open subset of $X^u(\xn, \epsilon_Y)$. Moreover, by Lemma \ref{hxnLemma}, $\pi_0( V) = \pi_0( X^u(\yn, \delta_{\yn}))$. Thus $V$ satisfies the conditions required in the definition of $\sim_0$ and $\yn \sim_0 \xn$.

Finally, suppose $\xn \sim_0 \yn$ and $\yn \sim_0 \zn$. By the definition of $\sim_0$,
\[
x_0 = y_0 = z_0.
\] 
Since $h_{\xn}$ is a homeomorphism onto its image (which contains $\yn$) there exists $0< \delta < \delta_x$ such that 
\[
h_{\xn}( X^u(\xn, \delta)) \subseteq X^u(\yn, \delta_{\yn}).
\]
Then, $(h_{\yn} \circ h_{\xn})( X^u(\xn, \delta )) \subseteq X^u(\zn, \epsilon_Y)$ and moreover, using Lemma \ref{hxnLemma} twice,
\[
\pi_0 ( (h_{\yn} \circ h_{\xn})( X^u(\xn, \delta )) = \pi_0 ( h_{\xn}( X^u ( \xn, \delta)) = \pi_0 ( X^u(\xn, \delta)).
\]
Hence, the set $U:= (h_{\yn} \circ h_{\xn})( X^u(\xn, \delta ))$ satisfies the requirements in Definition \ref{tildeZeroDef} and $\xn \sim_0 \zn$.
\end{proof}

\begin{proposition} \label{GzeroOpenSubGrpAndEtale}
$G_0(\Per)$ is an open subgroupoid of $G^s(\Per)$. In particular, $G_0(\Per)$ is \'etale.
\end{proposition}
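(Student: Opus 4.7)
The goal is to show every point of $G_0(\Per)$ admits an open neighborhood in $G^s(\Per)$ contained in $G_0(\Per)$; since $G^s(\Per)$ is \'etale, \'etaleness of $G_0(\Per)$ follows immediately from this. Fix $(\xn, \yn) \in G_0(\Per)$, together with the data $\delta_{\xn} > 0$, $U \subseteq X^u(\yn, \epsilon_Y)$, and the homeomorphism $h_{\xn} : X^u(\xn, \delta_{\xn}) \to X^u(\yn, \epsilon_Y)$ supplied by Definition \ref{tildeZeroDef}.

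The first step is to produce a concrete basic open neighborhood of $(\xn, \yn)$ using Theorem \ref{etaleTopThm}. Since $x_0 = y_0$, the components of $\varphi^{K_0}(\xn)$ and $\varphi^{K_0}(\yn)$ agree in indices $0, \ldots, K_0$, so Lemma \ref{KzeroLemma} together with Theorem \ref{WielerTheorem} yields $\varphi^{K_0}(\xn) \in X^s(\varphi^{K_0}(\yn), \beta/4) = X^s(\varphi^{K_0}(\yn), \epsilon_X/2)$. Consequently, for all sufficiently small $\delta > 0$, the formula $h(\wn) := \varphi^{-K_0}([\varphi^{K_0}(\wn), \varphi^{K_0}(\xn)])$ defines a homeomorphism $X^u(\yn, \delta) \to X^u(\xn, \epsilon_X)$ and
\[ N_\delta := \{(h(\wn), \wn) : \wn \in X^u(\yn, \delta)\} \]
is an open neighborhood of $(h(\yn), \yn) = (\xn, \yn)$ in $G^s(\Per)$. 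The core of the proof is to show $N_\delta \subseteq G_0(\Per)$ for $\delta$ sufficiently small.

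To this end, fix $\wn \in X^u(\yn, \delta)$ and set $\xn' := h(\wn)$ and $\yn' := \wn$; we must verify $\xn' \sim_0 \yn'$. Observe that $h$ is precisely the map $h_{\yn}$ associated to the relation $\yn \sim_0 \xn$ (available by the symmetry of $\sim_0$ already established), so Lemma \ref{hxnLemma} immediately yields $\pi_0(\xn') = \pi_0(\yn')$, which is condition (1) of Definition \ref{tildeZeroDef}. For condition (2), a direct bracket computation using axiom B3 followed by Lemma \ref{EpsilonYLemma} (which ensures $[\varphi^{K_0}(\wn), \varphi^{K_0}(\yn)] = \varphi^{K_0}(\wn)$) gives
\[ h_{\xn}(\xn') = \varphi^{-K_0}\bigl([\varphi^{K_0}(\wn), \varphi^{K_0}(\yn)]\bigr) = \wn = \yn'. \]
Continuity of $h$ at $\yn$ forces $\xn' \in X^u(\xn, \delta_{\xn})$ once $\delta$ is chosen small enough, so one may select $0 < \delta_{\xn'} < \epsilon_Y$ small enough that $X^u(\xn', \delta_{\xn'}) \subseteq X^u(\xn, \delta_{\xn})$ and $h_{\xn}(X^u(\xn', \delta_{\xn'})) \subseteq X^u(\yn', \epsilon_Y)$. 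Setting $U' := h_{\xn}(X^u(\xn', \delta_{\xn'}))$, Lemma \ref{hxnLemma} gives $\pi_0(X^u(\xn', \delta_{\xn'})) = \pi_0(U')$, so $(\delta_{\xn'}, U')$ witnesses $\xn' \sim_0 \yn'$.

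The main technical obstacle is the identification of the neighborhood map $h$ with the witness map $h_{\yn}$ coming from the symmetric relation (which lets us invoke Lemma \ref{hxnLemma} directly for condition (1)) together with the bracket calculation $h_{\xn}(\xn') = \yn'$—both rely on carefully tracking the smallness bounds encoded in $\epsilon_Y$, $K_0$, and $\delta_{\xn}$ so that every bracket expression is defined and axiom B3 is legitimately applied. Once these points are handled, the argument shows $N_\delta \subseteq G_0(\Per)$ for small $\delta$, so $G_0(\Per)$ is open in $G^s(\Per)$; being an open subgroupoid of an \'etale groupoid, it is itself \'etale.
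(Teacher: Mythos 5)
Your proof is correct and follows essentially the same route as the paper: exhibit the basic open neighborhood of $(\xn,\yn)$ in $G^s(\Per)$ given by the graph of the holonomy map and use Lemma \ref{hxnLemma} to verify both conditions of Definition \ref{tildeZeroDef} at each of its points. The only cosmetic difference is that you parametrize the neighborhood over $X^u(\yn,\delta)$ via $h=h_{\yn}$ rather than over $X^u(\xn,\delta_{\xn})$ via $h_{\xn}$, which costs you the extra bracket computation $h_{\xn}(h(\wn))=\wn$; in exchange you spell out the witness data for condition (2) more explicitly than the paper does.
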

\begin{proof}
If $\xn \sim_0 \yn$, then $x_0 = y_0$. Lemma \ref{stableEquivIs} implies that $\xn \sim_s \yn$. To see that $G_0(\Per)$ is open, still assuming $\xn \sim_0 \yn$, we recall that $h_{\xn}$ leads to an open set in $G^s(\Per)$ by taking
\[
V:= \{ (\wn, h_{\xn}(\wn)) \: | \: \wn \in X^{u}(\xn, \delta_{\xn}) \}.
\]
Lemma \ref{hxnLemma} implies that $V \subseteq G_0(\Per)$. In more detail, the first part of Lemma \ref{hxnLemma} implies that $\pi_0(\wn)=\pi_0(h_{\xn}(\wn))$ and the second part of Lemma \ref{hxnLemma} implies the second condition in Definition \ref{tildeZeroDef} holds. 
\end{proof}

\begin{proposition}
The subspace topology on $G_0(\Per)$ obtained as a subspace of $G^s(\Per)$ coincides with the subspace topology obtained from $G_0(\Per) \subseteq X^u(\Per) \times X^u(\Per)$.
\end{proposition}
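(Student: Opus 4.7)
The plan is to establish mutual inclusion of the two subspace topologies on $G_0(\Per)$.

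\smallskip

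\textbf{Direction 1 (product subspace topology $\subseteq$ étale subspace topology).} This direction is formal. The source and range maps $s, r : G^s(\Per) \to X^u(\Per)$ are local homeomorphisms since $G^s(\Per)$ is étale (Theorem \ref{etaleTopThm}), and by construction the topology on the unit space $X^u(\Per)$ is the one it inherits from the étale topology. Hence for any open sets $A, B \subseteq X^u(\Per)$, the intersection $r^{-1}(A) \cap s^{-1}(B) \cap G_0(\Per) = (A \times B) \cap G_0(\Per)$ is open in $G_0(\Per)$ with the étale subspace topology. Since such rectangles form a subbase for the product subspace topology, this inclusion follows.

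\smallskip

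\textbf{Direction 2 (étale subspace topology $\subseteq$ product subspace topology).} This is the main content. A basic étale open neighborhood of $(\xn, \yn) \in G_0(\Per)$ can be written, by Proposition \ref{GzeroOpenSubGrpAndEtale}, as $V = \{(\wn, h_{\xn}(\wn)) : \wn \in U\}$ for some open $U \subseteq X^u(\xn, \delta_{\xn})$. Given a point $(\wn_0, h_{\xn}(\wn_0)) \in V$, I would produce an honest product-basic open neighborhood of it inside $V$ as follows. Using the fact noted at the beginning of Section 3 that $\pi_0 : X^u(\Per) \to Y$ is locally an embedding (in particular locally injective), choose an open neighborhood $B$ of $h_{\xn}(\wn_0)$ in $X^u(\Per)$ on which $\pi_0$ is injective, and then, by continuity of $h_{\xn}$, shrink $U$ to an open $A \ni \wn_0$ with $h_{\xn}(A) \subseteq B$. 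The claim is that $(A \times B) \cap G_0(\Per) \subseteq V$: if $(\wn, \wn')$ lies in this intersection, then the definition of $\sim_0$ gives $\pi_0(\wn) = \pi_0(\wn')$, while Lemma \ref{hxnLemma} gives $\pi_0(h_{\xn}(\wn)) = \pi_0(\wn)$. Since $\wn'$ and $h_{\xn}(\wn)$ both lie in $B$, where $\pi_0$ is injective, we conclude $\wn' = h_{\xn}(\wn)$, placing $(\wn, \wn')$ in $V$.

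\smallskip

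\textbf{Main obstacle.} The subtle step is in Direction 2. A priori the $\sim_0$-equivalence class of a given $\wn$ intersected with $B$ could contain several points, so one cannot directly recover $\wn'$ from $\wn$ using only the identity $\pi_0(\wn) = \pi_0(\wn')$. The resolution is to shrink $B$ first so that $\pi_0|_B$ is injective, and this is precisely where the local-embedding property of $\pi_0$ (derived from local injectivity together with Hausdorffness of $Y$ and local compactness of $X^u(\Per)$) is used. This is also what distinguishes $G_0(\Per)$ from $G_{\mathrm{naive}}(\Per)$ of Example \ref{NotOpenRel} and makes the former amenable to the Fell-algebra framework used in the sequel.
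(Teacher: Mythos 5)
Your proof is correct, but it takes a different route from the paper's. The paper's argument is a two-line reduction: by Lemma \ref{KzeroLemma} and Theorem \ref{WielerTheorem}, $G_0(\Per)$ is contained in the set $(\varphi^{-K_0}\times\varphi^{-K_0})\bigl(\bigl\{(\xn,\yn) : \yn \in X^s\bigl(\xn, \tfrac{\beta}{4}\bigr)\bigr\}\bigr)$, and on that larger set the two topologies are asserted to coincide already; the mechanism behind that assertion is the uniqueness of the bracket (Theorem \ref{wellKnownSmaleSpace}(1)), which determines the second coordinate of such a pair from the first inside a small product rectangle, making the graphs $V(x,y,h,\delta)$ relatively product-open. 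Your Direction 2 proves the same determinacy statement, but only on $G_0(\Per)$ and by a different mechanism: condition (1) of Definition \ref{tildeZeroDef} forces $\pi_0(\wn)=\pi_0(\wn')$, and local injectivity of $\pi_0$ (asserted at the start of Section 3) then identifies $\wn'$ with $h_{\xn}(\wn)$ once $\wn'$ is confined to a small set $B$ on which $\pi_0$ is injective; your Direction 1, via continuity of $(r,s)$, is the same formal half of the argument in both treatments. What your route buys is a self-contained proof that does not rest on the paper's unproved claim about the larger ambient set; what it costs is that it applies only to $G_0(\Per)$, and it quietly uses that the graphs $\{(\wn, h_{\xn}(\wn)) : \wn \in U\}$ form a neighborhood base for the \'{e}tale topology at each point of $G_0(\Per)$ --- true because these are open bisections and the groupoid is principal, so any two of them through the same point agree near that point, but this deserves a sentence of justification.
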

\begin{proof}
For each $k$, the topology on the set 
\[
(\varphi^{-k} \times \varphi^{-k}) \left(  \left\{ ( \xn, \yn ) \in X^u(\Per) \times X^u(\Per) \: \mid \: \yn \in X^s\left(\xn, \frac{\beta}{4} \right) \right\} \right)
\]
obtained from the topology on $G^s(\Per)$ and the subspace space topology obtained from $X^u(\Per) \times X^u(\Per)$ coincide. By Lemma \ref{KzeroLemma} and Theorem \ref{WielerTheorem}, $G_0(\Per)$ is contained in 
\[
(\varphi^{-K_0} \times \varphi^{-K_0}) \left(  \left\{ ( \xn, \yn ) \in X^u(\Per) \times X^u(\Per) \: \mid \: \yn \in X^s\left(\xn, \frac{\beta}{4} \right) \right\} \right).
\]
\end{proof}

\begin{lemma} \label{tildeZeroEquivIfKzeroEqual}
If $\xn$ and $\yn$ are in $X^u(\Per)$ and $x_i = y_i$ for $0\le i \le K_0$, then $\xn \sim_0 \yn$.
\end{lemma}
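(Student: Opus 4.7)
The plan is to directly verify the two conditions in Definition \ref{tildeZeroDef}. Condition (1) is immediate since the hypothesis forces $x_0 = y_0$. For condition (2), I would construct the required open neighborhood $U$ via a bracket map.

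First I would establish that $\yn \in X^s(\xn, \beta/4)$. Lemma \ref{KzeroLemma}(2) yields $d_X(\xn, \yn) < \beta/4$, and since $K_0 \ge K$, the hypothesis $x_i = y_i$ for $0 \le i \le K_0$ gives in particular $x_m = y_m$ for $0 \le m \le K-1$. The characterization of local stable sets in Theorem \ref{WielerTheorem} then places $\yn$ in $X^s(\xn, \beta/4)$, from which $[\xn, \yn] = \yn$ by the intersection property in Theorem \ref{wellKnownSmaleSpace}.

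Next, for $\delta_{\xn}$ sufficiently small (with $\delta_{\xn} < \epsilon_Y$), I would define $h \colon X^u(\xn, \delta_{\xn}) \to X$ by $h(\wn) = [\wn, \yn]$. The triangle inequality $d_X(\wn, \yn) \le d_X(\wn, \xn) + d_X(\xn, \yn) < \delta_{\xn} + \beta/4 < \epsilon_X$ ensures the bracket is defined. Axiom B3 yields $[h(\wn), \yn] = h(\wn)$, and continuity of the bracket at $(\xn, \yn)$ (where $[\xn, \yn] = \yn$) allows one to shrink $\delta_{\xn}$ so that $d_X(h(\wn), \yn) < \epsilon_Y$; together these place $h(\wn) \in X^u(\yn, \epsilon_Y)$. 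Likewise axiom B2 gives $[\wn, h(\wn)] = h(\wn)$, and the triangle inequality bounds $d_X(\wn, h(\wn))$ by $\epsilon_X$ for $\delta_{\xn}$ small. Theorem \ref{WielerTheorem}'s description of local stable sets then forces $h(\wn)_m = w_m$ for $0 \le m \le K-1$; in particular $\pi_0(h(\wn)) = w_0 = \pi_0(\wn)$.

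Setting $U := h(X^u(\xn, \delta_{\xn}))$, we automatically obtain $\pi_0(U) = \pi_0(X^u(\xn, \delta_{\xn}))$. The remaining technical step, which I expect to be the main obstacle, is verifying that $U$ is open in $X^u(\yn, \epsilon_Y)$. My approach is to exhibit a continuous inverse to $h$: for $\zn \in X^u(\yn, \epsilon_Y)$ near $\yn$, the bracket $[\zn, \xn]$ is defined, satisfies $[[\zn,\xn],\xn] = [\zn,\xn]$ by B3, and converges to $\xn$ as $\zn \to \yn$ (since $[\yn, \xn] = \xn$). Using B3 together with the identities $[\zn, \yn] = \zn$ for $\zn \in X^u(\yn, \epsilon_Y)$ and $[\wn, \xn] = \wn$ for $\wn \in X^u(\xn, \delta_{\xn})$, one checks that $h$ and $\zn \mapsto [\zn, \xn]$ are mutual inverses between $X^u(\xn, \delta_{\xn})$ and $U$. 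Finally, $U$ coincides with the preimage of the open set $X^u(\xn, \delta_{\xn})$ under the continuous map $\zn \mapsto [\zn, \xn]$ (defined on a suitable neighborhood of $\yn$), and hence is open.
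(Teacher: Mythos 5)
Your proposal is correct and follows essentially the same route as the paper's proof: both establish $\yn \in X^s(\xn, \beta/4)$ from Lemma \ref{KzeroLemma} and Theorem \ref{WielerTheorem}, take $U$ to be the image of $X^u(\xn,\delta)$ under the bracket map $\wn \mapsto [\wn,\yn]$, and conclude $\pi_0(U) = \pi_0(X^u(\xn,\delta))$ because the bracket places $h(\wn)$ in a local stable set of $\wn$, forcing the zeroth coordinates to agree. You simply supply more detail than the paper does on why $h$ lands in $X^u(\yn,\epsilon_Y)$ and why $U$ is open (via the inverse bracket $\zn \mapsto [\zn,\xn]$), steps the paper treats as standard properties of the local product structure.
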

\begin{proof}
By the defining properties of $K_0$ (see the statement of Lemma \ref{KzeroLemma}), we have the following:
\begin{enumerate}
\item $x_n = y_n$ for $0\le n \le K$ where $K$ is the constant in Definition \ref{WielerAxioms};
\item $d(\xn, \yn) < \frac{\beta}{4}$.
\end{enumerate}
Theorem \ref{WielerTheorem} then implies that $\yn \in X^s\left(\xn, \frac{\beta}{4} \right)$. In particular, for some $0< \delta < \frac{\beta}{4}$, the bracket map defines a map from $X^u(\xn, \delta)$ to $X^u\left(\yn, \epsilon_Y \right)$ via
\[
\wn \mapsto [ \wn, \yn ].
\]
We denote this map by $h$ and consider $U=h(X^u(\tilde{\xn}, \delta))$, which is an open subset of $X^u\left(\tilde{\yn}, \epsilon_Y \right)$. Moreover, if $\zn \in U$, then by the definition of the bracket, $\zn \in X^s\left(\wn, \frac{\beta}{2}\right)$ and by Theorem \ref{WielerTheorem}, $z_0 = w_0$. It follows that $\pi_0(U) = \pi_0 ( X^u(\tilde{\xn}, \delta))$ and hence that $\xn \sim_0 \yn$.
\end{proof}

\begin{proposition} \label{infiniteDiscreteSet}
For any $\xn \in X^u(P)$, the equivalence class $[\xn]_0$ is an infinite discrete set.
\end{proposition}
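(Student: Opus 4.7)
The plan is to verify separately that $[\xn]_0$ is a discrete subset of $X^u(\Per)$ and that it is infinite.

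For discreteness, I will use the fact, noted at the start of Section~3, that $\pi_0\colon X^u(\Per)\to Y$ is locally injective. Given $\yn\in[\xn]_0$, I would choose an open neighborhood $U\subseteq X^u(\Per)$ of $\yn$ on which $\pi_0$ is injective. Any $\yn'\in U\cap[\xn]_0$ satisfies $\pi_0(\yn')=\pi_0(\xn)=\pi_0(\yn)$ by condition~(1) of Definition~\ref{tildeZeroDef}, forcing $\yn'=\yn$. Thus $\yn$ is isolated in $[\xn]_0$.

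For infiniteness, I will apply Lemma~\ref{tildeZeroEquivIfKzeroEqual}: it suffices to produce infinitely many $\yn\in X^u(\Per)$ with $y_i=x_i$ for $0\le i\le K_0$. Because $\Per$ is $\varphi$-invariant, $\varphi^{-K_0}$ restricts to a bijection of $X^u(\Per)$ onto itself, and the correspondence $\yn\mapsto\varphi^{-K_0}(\yn)$ identifies the set of such $\yn$'s exactly with the fiber $\pi_0^{-1}(x_{K_0})\cap X^u(\Per)$ (indeed, if $z_0=x_{K_0}$ then $\varphi^{K_0}(\zn)$ has $i$-th coordinate $g^{K_0-i}(x_{K_0})=x_i$ for $0\le i\le K_0$). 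So the problem reduces to showing this fiber is infinite.

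The main obstacle is therefore the fiber infiniteness. My plan is to exploit the fact that $X$ is infinite, so each global unstable set $X^u(p)$ is a non-stabilizing exhaustion $\bigcup_n\varphi^n(X^u(\varphi^{-n}(p),\epsilon))$ of compact local unstable sets and is non-compact in its natural topology; in particular $X^u(\Per)$ is non-compact. Combined with the surjective local-embedding property of $\pi_0\colon X^u(\Per)\to Y$ onto the compact space $Y$, and the equivariance $\pi_0\circ\varphi = g\circ\pi_0$ together with irreducibility of $(X,\varphi)$, a covering-space-style argument should show that every fiber of $\pi_0$ is infinite, in particular $\pi_0^{-1}(x_{K_0})\cap X^u(\Per)$. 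The subtlety here is that a local embedding from a non-compact space to a compact space need not a priori have infinite fibers (e.g.\ a figure-eight parametrization), so the argument must genuinely use the dynamical equivariance and irreducibility to propagate infiniteness across all fibers simultaneously.
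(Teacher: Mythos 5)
Your discreteness argument is correct and is essentially the paper's: the paper cites the \'etaleness of $G_0(\Per)$ (Proposition \ref{GzeroOpenSubGrpAndEtale}), which rests on the same local injectivity of $\pi_0$ that you invoke directly. Your reduction of infiniteness to the infiniteness of the fibre $\pi_0^{-1}(\{x_{K_0}\}) \cap X^u(\Per)$, via the bijection $\varphi^{K_0}$ and Lemma \ref{tildeZeroEquivIfKzeroEqual}, is also valid. The gap is that you never prove the fibre is infinite: you only assert that ``a covering-space-style argument should show'' it, and you yourself point out why the topological inputs you list (non-compact domain, compact range, surjective local embedding) do not suffice. Worse, the reduction is essentially circular in strength. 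Since $\xn \sim_0 \yn$ forces $\pi_0(\xn) = \pi_0(\yn)$, each fibre of $\pi_0$ is a union of $\sim_0$-classes, and by Proposition \ref{Proposition-FinitelyManyEquivalenceClasses} it is a union of \emph{finitely many} such classes; hence ``the fibre over $x_{K_0}$ is infinite'' is equivalent to ``some $\sim_0$-class in that fibre is infinite,'' which is (modulo the bijection $\varphi^{K_0}$) the statement being proved. So the appeal to equivariance and irreducibility is not a detail to be filled in later; it is the entire content of the proposition, and no mechanism for it is supplied.

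The missing idea, which is how the paper proceeds, is to produce points of the class from the \emph{stable} direction rather than from fibres of $\pi_0$. Since $(X,\varphi)$ is mixing and $X$ is infinite, $X^u(\Per)$ is dense in $X$ and the local stable set $X^s(\xn,\epsilon_X)$ is infinite, so $X^s(\xn,\epsilon_X) \cap X^u(\Per)$ is infinite. By Theorem \ref{WielerTheorem}, any $\yn$ in this set satisfies $y_m = x_m$ for $0 \le m \le K-1$; in particular $y_0 = x_0$, so $\varphi^{K_0}(\yn)$ and $\varphi^{K_0}(\xn)$ agree in coordinates $0,\dots,K_0$, and Lemma \ref{tildeZeroEquivIfKzeroEqual} places the infinite set $\varphi^{K_0}\bigl(X^s(\xn,\epsilon_X) \cap X^u(\Per)\bigr)$ inside $[\varphi^{K_0}(\xn)]_0$. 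Since $\varphi^{K_0}$ is a bijection of $X^u(\Per)$ and $\xn$ was arbitrary, every class is infinite. If you prefer to keep your fibre formulation, this same argument is what proves the fibre infinite; there is no purely topological shortcut.
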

\begin{proof} 
Since the equivalence relation $\sim_0$ is \'etale, the set $[\xn]_0$ is discrete. Using Theorem \ref{WielerTheorem} and the previous result, we have the following inclusions
\[
\varphi^{K_0}(X^s(\xn, \epsilon_X)) \cap X^u(P) \subseteq \{ \yn \in X^u(P) \: | \: y_{K_0} =x_{K_0} \}  \subseteq [\xn]_0.
\]
Moreover, the set $\varphi^{K_0}(X^s(\xn, \epsilon_X)) \cap X^u(P)$ is infinite because $(X, \varphi)$ is mixing. The result then follows.
\end{proof}

\begin{proposition}\label{Proposition-FinitelyManyEquivalenceClasses}
	Given $x \in Y$, there are finitely many equivalence classes of the form $[\xn]_0$ where $\pi_0(\xn) = x$.
\end{proposition}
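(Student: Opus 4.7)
The plan is to combine Lemma \ref{tildeZeroEquivIfKzeroEqual}, which says that two points of $X^u(\Per)$ sharing their first $K_0 + 1$ coordinates are automatically $\sim_0$-equivalent, with the fact (noted just after Theorem \ref{WielerTheorem} using \cite[Lemma 3.4]{Wie}) that $g$ is finite-to-one.

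First I would introduce, for the given $x \in Y$, the set
\[
F_x := \{(x_0, x_1, \ldots, x_{K_0}) \in Y^{K_0+1} : x_0 = x \text{ and } g(x_{i+1}) = x_i \text{ for } 0 \le i \le K_0 - 1\}.
\]
An iterated application of the finite-to-one property of $g$ shows that
\[
F_x \subseteq \{x\} \times g^{-1}(x) \times g^{-2}(x) \times \cdots \times g^{-K_0}(x),
\]
so $F_x$ is finite as a subset of a finite cartesian product.

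Next I would define $\Psi : \pi_0^{-1}(x) \to F_x$ by $\Psi(\xn) = (x_0, x_1, \ldots, x_{K_0})$, which lands in $F_x$ because of the inverse-limit relation $g(x_{i+1}) = x_i$. Lemma \ref{tildeZeroEquivIfKzeroEqual} says precisely that each fiber of $\Psi$ is contained in a single $\sim_0$-equivalence class. Hence, picking one representative from each equivalence class of the form $[\xn]_0$ with $\pi_0(\xn) = x$ produces pairwise distinct elements of $F_x$, and so the number of such classes is at most $|F_x|$, which is finite.

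I do not expect any real obstacles: once Lemma \ref{tildeZeroEquivIfKzeroEqual} and the finite-to-one property of $g$ are in hand, this is a counting argument. The one subtlety worth flagging is that $\Psi$ need not be constant on a $\sim_0$-equivalence class (distinct equivalent points can have different coordinates for $i \ge 1$), so $\Psi$ does not descend to a map on the quotient; but this is irrelevant, since the containment ``fibers of $\Psi$ $\subseteq$ $\sim_0$-classes'' alone already bounds the number of equivalence classes by $|F_x|$.
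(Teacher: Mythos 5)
Your proof is correct and follows essentially the same route as the paper: both arguments bound the number of classes by combining Lemma \ref{tildeZeroEquivIfKzeroEqual} with the finite-to-one property of $g$, the only cosmetic difference being that the paper indexes classes directly by $x_{K_0} \in g^{-K_0}(\{x\})$ while you use the set $F_x$ of admissible length-$(K_0+1)$ tuples, which is in bijection with $g^{-K_0}(\{x\})$ via the relation $x_i = g^{K_0-i}(x_{K_0})$.
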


\begin{proof}
	Fix $x \in Y$. Since $g: Y \to Y$ is finite-to-one (by \cite[Lemma 3.4]{Wie}) the set $g^{-K_0}\{ x\}$ is finite.
	If $\xn$ satisfies $\pi_0(\xn) = x$, then $x_{K_0} \in g^{-K_0}\{ x\}$. By Lemma \ref{tildeZeroEquivIfKzeroEqual} if $[\xn]_0 \neq [\yn]_0$, then $x_{K_0} \neq y_{K_0}$. It follows that the cardinality of $\{ [\xn]_0 \: | \:  \pi_0(\xn) = x\}$ is less than or equal to the cardinality of $g^{-K_0}(\{ x\})$, which is finite.
\end{proof}

We now use our equivalence relation $\sim_0$ to define an increasing sequence of equivalence relations.

\begin{definition}
For each $k\in \N$, we define
\[
G_k(\Per) := \{ (\xn , \yn) \in X^u(\Per) \times X^u(\Per) \mid \varphi^k(\xn) \sim_0 \varphi^k(\yn) \}.
\]
and write $\xn \sim_k \yn$ when $(\xn, \yn) \in G_k(\Per)$.
\end{definition}

In the special case of the $aab/ab$-solenoid, recall that $\sim_0$ has the form:

\begin{figure}[h]
\begin{tikzpicture}
\draw [thick] (-4.5,0) -- (8.5,0);
\draw (-4,-.2) -- (-4, .2);
\draw (-2,-.2) -- (-2, .2);
\draw [ultra thick] (0,-.2) -- (0, .2);
\draw (2,-.2) -- (2, .2);
\draw (4,-.2) -- (4, .2);
\draw (6,-.2) -- (6, .2);
\draw (8,-.2) -- (8, .2);
\node at (-4.5,.25){$\ldots$};
\node at (-3,.25){$a$};
\node at (-1,.25){$b$};
\node at (0,-.5){$\mathbf{p}$};
\node at (1,.25){$a$};
\node at (2,-.5){$\mathbf{q}$};
\node at (3,.25){$a$};
\node at (5,.25){$b$};
\node at (7,.25){$a$};
\node at (8.5,.25){$\ldots$};
\end{tikzpicture}
\caption{$\sim_0$ for $aab/ab$ solenoid}
\label{Figure-aab/ab-Tilde_0-Wide}
\end{figure}

\noindent Drawn on the same scale, the relation $\sim_1$ (in this special case) takes the form:

\begin{figure}[h]
\begin{tikzpicture}
\draw [thick] (-4.5,0) -- (8.5,0);
\draw (-4,-.2) -- (-4, .2);
\draw (-10/3,-.1) -- (-10/3, .1);
\draw (-8/3,-.1) -- (-8/3, .1);
\draw (-2,-.2) -- (-2, .2);
\draw (-1,-.1) -- (-1, .1);
\draw [ultra thick] (0,-.2) -- (0, .2);
\draw (2/3,-.1) -- (2/3, .1);
\draw (4/3,-.1) -- (4/3, .1);
\draw (2,-.2) -- (2, .2);
\draw (8/3,-.1) -- (8/3, .1);
\draw (10/3,-.1) -- (10/3, .1);
\draw (4,-.2) -- (4, .2);
\draw (5,-.1) -- (5, .1);
\draw (6,-.2) -- (6, .2);
\draw (20/3,-.1) -- (20/3, .1);
\draw (22/3,-.1) -- (22/3, .1);
\draw (8,-.2) -- (8, .2);
\node at (-4.5,.25){$\ldots$};
\node at (-11/3,.25){$a$};
\node at (-3,.25){$a$};
\node at (-7/3,.25){$b$};
\node at (-1.5,.25){$a$};
\node at (-.5,.25){$b$};
\node at (0,-.5){$\mathbf{p}$};
\node at (1/3,.25){$a$};
\node at (1,.25){$a$};
\node at (5/3,.25){$b$};
\node at (2,-.5){$\mathbf{q}$};
\node at (7/3,.25){$a$};
\node at (3,.25){$a$};
\node at (11/3,.25){$b$};
\node at (4.5,.25){$a$};
\node at (5.5,.25){$b$};
\node at (19/3,.25){$a$};
\node at (7,.25){$a$};
\node at (23/3,.25){$b$};
\node at (8.5,.25){$\ldots$};
\end{tikzpicture}
\caption{$\sim_1$ for $aab/ab$ solenoid}
\label{Figure-aab/ab-Tilde_1}
\end{figure}

\noindent Observe that $\mathbf{p} \not\sim_0 \mathbf{q}$, but $\mathbf{p} \sim_1 \mathbf{q}$.  These two tilings of the line $X^u(\Per)$ are the same, up to homeomorphism.

Returning to the general case, the next result follows from directly from the definitions and by construction: 

\begin{proposition} \label{GkIsoGzero}
For each $k \in \N$, $G_k(\Per)$ is an open subgroupoid of $G^s(\Per)$ and $G_k(\Per) \subseteq G_{k+1}(\Per)$. Moreover, the map $\varphi^{k} \times \varphi^k : G_k(\Per) \rightarrow G_0(\Per)$ is an isomorphism of topological groupoids.
\end{proposition}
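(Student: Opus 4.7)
The plan is to reduce all four claims to two structural facts: (A) $\varphi \times \varphi$ is a self-homeomorphism of the \'etale groupoid $G^s(\Per)$, and (B) $\varphi$ preserves $\sim_0$. Since by definition $G_k(\Per) = (\varphi^k \times \varphi^k)^{-1}(G_0(\Per))$, combining (A) with Proposition \ref{GzeroOpenSubGrpAndEtale} immediately gives that $G_k(\Per)$ is open, is a subgroupoid, and that $\varphi^k \times \varphi^k$ restricts to a homeomorphism and groupoid isomorphism $G_k(\Per) \to G_0(\Per)$. The inclusion $G_k(\Per) \subseteq G_{k+1}(\Per)$ then follows by $k$-fold iteration from the special case $G_0(\Per) \subseteq G_1(\Per)$, which is precisely (B) applied to pairs in $G_0(\Per)$.

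For (A), observe that $\varphi$ preserves $\sim_s$ and (because $\Per$ is $\varphi$-invariant) $X^u(\Per)$, so $\varphi \times \varphi$ is at least a set-theoretic groupoid automorphism of $G^s(\Per)$. To verify it is a homeomorphism of the \'etale topology of Theorem \ref{etaleTopThm}, I would compute the image of a basic open set $V(x, y, h, \delta)$: axiom B4 shows the conjugate $\varphi \circ h \circ \varphi^{-1}$ is again a bracket map of the form defining the topology, now adapted to the pair $(\varphi(x), \varphi(y))$, while axiom C2 applied to $\varphi^{-1}$ produces the inclusion $X^u(\varphi(y), \lambda \delta) \subseteq \varphi(X^u(y, \delta))$, so that the image contains a basic open set around $(\varphi(x), \varphi(y))$. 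The same argument for $\varphi^{-1}$ gives continuity of the inverse.

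The main obstacle is fact (B): if $\an \sim_0 \bn$, then $\varphi(\an) \sim_0 \varphi(\bn)$. The first condition of Definition \ref{tildeZeroDef} is immediate from $\pi_0 \circ \varphi = g \circ \pi_0$ and $\pi_0(\an) = \pi_0(\bn)$. For the second, I would take the witnessing data furnished by Corollary \ref{UisAcutally}, namely $\delta < \epsilon_Y$ and $U = h_{\an}(X^u(\an, \delta)) \subseteq X^u(\bn, \epsilon_Y)$, and consider the conjugate $\tilde{h} := \varphi \circ h_{\an} \circ \varphi^{-1}$ on $\varphi(X^u(\an, \delta))$. Two short computations are the crux. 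First, $\pi_0 \circ \tilde{h} = \pi_0$ on its domain: combine $\pi_0 \circ h_{\an} = \pi_0$ from Lemma \ref{hxnLemma} with the identity $g \circ \pi_0 \circ \varphi^{-1} = \pi_0$, which holds because the zeroth coordinate of $\varphi^{-1}(\vn)$ is $v_1$ and $g(v_1) = v_0$. Second, $\tilde h(\varphi(\an)) = \varphi(\bn)$, which reduces to $h_{\an}(\an) = \bn$: since $a_0 = b_0$, the first $K_0 + 1$ coordinates of $\varphi^{K_0}(\an)$ and $\varphi^{K_0}(\bn)$ coincide, so Lemma \ref{KzeroLemma} together with Theorem \ref{WielerTheorem} place $\varphi^{K_0}(\bn)$ inside $X^s(\varphi^{K_0}(\an), \beta/4)$ and hence the bracket $[\varphi^{K_0}(\an), \varphi^{K_0}(\bn)]$ equals $\varphi^{K_0}(\bn)$. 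Finally I would choose $\delta' < \epsilon_Y$ small enough that both $X^u(\varphi(\an), \delta') \subseteq \varphi(X^u(\an, \delta))$ (using that $\varphi$ is open on $X^u(\Per)$) and $\tilde h(X^u(\varphi(\an), \delta')) \subseteq X^u(\varphi(\bn), \epsilon_Y)$ (using continuity of $\tilde h$ at $\varphi(\an)$ with value $\varphi(\bn)$); the required witness for $\varphi(\an) \sim_0 \varphi(\bn)$ is then $U' := \tilde h(X^u(\varphi(\an), \delta'))$, which is open and satisfies $\pi_0(U') = \pi_0(X^u(\varphi(\an), \delta'))$.
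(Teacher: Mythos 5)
Your proof is correct. The paper gives no argument for this proposition (it asserts that it ``follows directly from the definitions and by construction''), so your write-up is best read as supplying the details the authors omit rather than as an alternative route. The reduction to facts (A) and (B) is the natural one, and (A) is routine. The only step with real content is (B), that $\varphi$ preserves $\sim_0$, which is exactly the content of $G_0(\Per)\subseteq G_1(\Per)$; note the paper later \emph{derives} ``$\varphi$ preserves $\sim_0$'' from this inclusion in the proof that $\tilde g$ is well defined, so it is genuinely the crux and cannot be waved away. Your verification is sound: the first condition of Definition \ref{tildeZeroDef} follows from $\pi_0\circ\varphi=g\circ\pi_0$; for the second, conjugating the witness $h_{\xn}$ by $\varphi$, the identity $g\circ\pi_0\circ\varphi^{-1}=\pi_0$ gives $\pi_0$-preservation, the bracket computation (agreement of the first $K_0+1$ coordinates of $\varphi^{K_0}(\xn)$ and $\varphi^{K_0}(\yn)$, hence $[\varphi^{K_0}(\xn),\varphi^{K_0}(\yn)]=\varphi^{K_0}(\yn)$) gives $h_{\xn}(\xn)=\yn$, and continuity of the conjugated map in the topology of $X^u(\Per)$ (for which the sets $X^u(\cdot,\epsilon)$ form neighborhood bases and $\varphi$ is a homeomorphism) yields the required $\delta'$ and open set $U'$. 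One cosmetic remark: $G_k\subseteq G_{k+1}$ follows from (B) in a single application (set ${\bf a}=\varphi^k(\xn)$, ${\bf b}=\varphi^k(\yn)$), no iteration needed.
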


\begin{theorem} \label{inductiveLimit}
As topological groupoids,
\[ 
\bigcup_{k \in \N} G_k(\Per) = G^s(\Per),
\]
where we take the inductive limit topology on the left hand side. Moreover, $C^*(G^s(\Per))$ is isomorphic to the stationary inductive limit: 
\[
\varinjlim (C^*(G_0(\Per)), \psi)
\]
where $\psi$ is obtained as the composition of the inclusion $C^*(G_0(\Per)) \subseteq C^*(G_1(\Per))$ followed by the isomorphism $C^*(G_1(\Per)) \cong C^*(G_0(\Per))$.
\end{theorem}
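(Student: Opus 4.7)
The plan is to first establish the groupoid-level identification $\bigcup_k G_k(\Per) = G^s(\Per)$ (as sets and as topological spaces), then descend to the $C^*$-algebra level via the standard functoriality of groupoid $C^*$-algebras under open inclusions, and finally invoke the isomorphisms $G_k(\Per) \cong G_0(\Per)$ supplied by Proposition \ref{GkIsoGzero} to rewrite the resulting inductive system in stationary form.

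For the set-theoretic equality, the inclusion $\bigcup_k G_k(\Per) \subseteq G^s(\Per)$ is immediate: if $(\xn, \yn) \in G_k(\Per)$, then $\varphi^k(\xn) \sim_0 \varphi^k(\yn)$ implies $\varphi^k(\xn) \sim_s \varphi^k(\yn)$ (by the proof of Proposition \ref{GzeroOpenSubGrpAndEtale}), and applying $\varphi^{-k}$ preserves stable equivalence. Conversely, given $(\xn, \yn) \in G^s(\Per)$, Lemma \ref{stableEquivIs} supplies some $N \in \N$ with $g^N(x_0) = g^N(y_0)$. Using the explicit formula
\[
\varphi^k(\xn) = (g^k(x_0), g^{k-1}(x_0), \ldots, g(x_0), x_0, x_1, x_2, \ldots),
\]
one checks that for any $k \ge N + K_0$ the first $K_0 + 1$ coordinates of $\varphi^k(\xn)$ and $\varphi^k(\yn)$ agree, since each such coordinate has the form $g^{k-i}(\cdot)$ with $k - i \ge N$. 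Lemma \ref{tildeZeroEquivIfKzeroEqual} then yields $\varphi^k(\xn) \sim_0 \varphi^k(\yn)$, hence $(\xn, \yn) \in G_k(\Per)$. For the topology, each $G_k(\Per)$ is open in $G^s(\Per)$ by Proposition \ref{GkIsoGzero}, so its intrinsic topology agrees with the subspace topology from $G^s(\Per)$; a subset $U \subseteq G^s(\Per)$ is therefore open if and only if $U \cap G_k(\Per)$ is open in $G_k(\Per)$ for every $k$, which is precisely the inductive limit topology.

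At the $C^*$-algebra level, each open inclusion $G_k(\Per) \hookrightarrow G_{k+1}(\Per)$ induces an injective $*$-homomorphism $C^*(G_k(\Per)) \hookrightarrow C^*(G_{k+1}(\Per))$ via extension of compactly supported functions by zero. Amenability of $G^s(\Per)$ (Theorem \ref{etaleTopThm}) is inherited by every open subgroupoid, so full and reduced $C^*$-algebras agree throughout and these inclusions extend continuously to the $C^*$-completions. Density of $C_c(G^s(\Per)) = \bigcup_k C_c(G_k(\Per))$ in $C^*(G^s(\Per))$ then yields $C^*(G^s(\Per)) \cong \varinjlim_k C^*(G_k(\Per))$ by standard arguments. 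To put the system into stationary form, I would use the groupoid isomorphisms $\Phi_k := \varphi^k \times \varphi^k: G_k(\Per) \to G_0(\Per)$ from Proposition \ref{GkIsoGzero} to obtain $*$-isomorphisms $C^*(G_k(\Per)) \cong C^*(G_0(\Per))$; the cocycle identity $\Phi_{k+1} = \Phi_1 \circ \Phi_k$ at the groupoid level translates, via pullback on $C_c$-functions, into the commutativity of the squares intertwining the inclusions $C^*(G_k(\Per)) \hookrightarrow C^*(G_{k+1}(\Per))$ with the self-map $\psi$ of $C^*(G_0(\Per))$, thereby identifying the two inductive systems. The main obstacle is really just the coordinate bookkeeping in the set-level equality, where one must correctly track how $\varphi^k$ acts on inverse-limit sequences and verify the threshold for the first $K_0 + 1$ coordinates to agree; the remaining steps are routine applications of amenability and of the functoriality of the groupoid $C^*$-algebra construction.
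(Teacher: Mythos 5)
Your proposal is correct and follows essentially the same route as the paper: the key step in both is to use Lemma \ref{stableEquivIs} to find $l$ with $g^l(x_0)=g^l(y_0)$ and then apply Lemma \ref{tildeZeroEquivIfKzeroEqual} to conclude $\varphi^{l+K_0}(\xn)\sim_0\varphi^{l+K_0}(\yn)$, with Proposition \ref{GkIsoGzero} handling the passage to the stationary $C^*$-inductive limit. You simply spell out the coordinate bookkeeping, the inductive limit topology, and the functoriality/amenability details that the paper leaves implicit.
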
 
\begin{proof}
Using Proposition \ref{GkIsoGzero} (i.e., the previous proposition) the inductive limit decomposition of $C^*(G^s(\Per))$ follows once we obtain the groupoid decomposition 
\[ \bigcup_{k \in \N} G_k(\Per) = G^s(\Per). \]
Therefore, we need only show that if $\xn \sim_s \yn$, then there exists $k\in \N$ such that $\xn \sim_k \yn$. As such, suppose $\xn \sim_s \yn$. Then there exists $l \in \N$ such that $g^l(x_0) = g^l(y_0)$. Then $\varphi^{l+K_0}(\xn) \sim_0 \varphi^{l+K_0}(\yn)$ by Lemma \ref{tildeZeroEquivIfKzeroEqual} and hence that $\xn \sim_{l+K_0} \yn$. 
\end{proof}

Let us be more explicit about $\psi$, which is the composition
\[ \begin{CD}
\psi: C^*(G_0(\Per)) @>\iota>> C^*(G_1(\Per)) @>{(\varphi^{-1} \times \varphi^{-1})^*}>> C^*(G_0(\Per))
\end{CD} \]
where $\iota$ is induced by the open inclusion $G_0(\Per) \hookrightarrow G_1(\Per)$ (extending functions by zero) and the second map is the isomorphism induced by pulling back functions via the groupoid isomorphism $\varphi^{-1} \times \varphi^{-1}$.  On a function $f \in C^*(G_0(\Per))$, $\psi$ is explicitly given by
\[ \psi(f)(\xn, \yn) = f(\varphi^{-1}(\xn), \varphi^{-1}(\yn)). \]
Note that we may have $\varphi^{-1}(\xn) \sim_1 \varphi^{-1}(\yn)$ and yet $\varphi^{-1}(\xn) \not\sim_0 \varphi^{-1}(\yn)$.  Here it is understood that $f(\varphi^{-1}(\xn), \varphi^{-1}(\yn)) = 0$.

\section{The quotient space $X^u(\Per)/{\sim_0}$}

In this section, we will prove that the quotient space $X^u(\Per)/{\sim_0}$ is a compact, locally Hausdorff space, and we will establish properties of various maps between $X^u(\Per), X^u(\Per)/{\sim_0}$, and $Y$.  Before proceeding with the general theory, we consider the case when $g$ is a local homeomorphism and then our running example (i.e., the $aab/ab$-solenoid).

\begin{example} \label{localHomeoCaseSpaceLevel}
Consider the case where $g:Y \rightarrow Y$ is a local homeomorphism. As discussed in Example \ref{simZeroLocalHomeo}, $\pi_0: X^u(\Per) \rightarrow Y$ is a covering map in this case, and consequently $\xn \sim_0 \yn$ if and only if $\pi_0(\xn) = \pi_0(\yn)$. Thus $X^u(\Per)/{\sim_0} \cong Y$.
\end{example}

\begin{example}
	The $aab/ab$-solenoid is an example where $X^u(\Per)/{\sim_0}$ is not Hausdorff. The quotient space is pictured in Figure \ref{Figure-aab/ab-QuotientSpace}.  The point $p \in Y$ split into three non-Hausdorff points, denoted $ab, ba, aa$.  These points correspond to the three different $\sim_0$-equivalence classes for ``integer" points, as seen in Figure \ref{Figure-aab/ab-Tilde_0}.
	Open neighborhoods of these three points are pictured in Figure \ref{Figure-aab/ab-OpenNeighborhoods}.

\newpage	
	
	\begin{figure}[h]
		\begin{tikzpicture}
		\draw (0,0) circle [radius=3];
		\draw (0,-1) circle [radius=2];
		\draw[fill] (0,-2.6) circle [radius=0.1];
		\draw[fill] (0,-3) circle [radius=0.1];
		\draw[fill] (0,-3.4) circle [radius=0.1];
		\node [right] at (0.1,-2.7) {$ab$};
		\node [right] at (0.1,-3.2) {$ba$};
		\node [right] at (0.1,-3.6) {$aa$};
		\node [above] at (0,3.1) {$a$};
		\node [above] at (0,1.1) {$b$};
		\end{tikzpicture}
		\caption{$X^u(\Per)/{\sim_0}$ for $aab/ab$ solenoid.}
		\label{Figure-aab/ab-QuotientSpace}
	\end{figure}	
	\begin{figure}[h]
		\begin{tikzpicture}
		\draw [dashed] ($(0,0) + (225:3)$) arc (225:240:3);
		\draw [ultra thick] ($(0,0) + (240:3)$) arc (240:270:3);
		\draw [dashed] ($(0,0) + (270:3)$) arc (270:315:3);
		\draw [dashed] ($(0,-1) + (210:2)$) arc (210:270:2);
		\draw [ultra thick] ($(0,-1) + (270:2)$) arc (270:315:2);
		\draw [dashed] ($(0,-1) + (315:2)$) arc (315:330:2);
		\node [rotate=-30] at (-1.5,-2.61) {$($};
		\node [rotate=45] at (1.414,-2.414) {$)$};
		\draw (0,-2.6) circle [radius=0.1];
		\draw[fill] (0,-3) circle [radius=0.1];
		\draw (0,-3.4) circle [radius=0.1];
		\node [right] at (0.1,-2.7) {$ab$};
		\node [right] at (0.1,-3.2) {$ba$};
		\node [right] at (0.1,-3.6) {$aa$};
		\end{tikzpicture} \qquad
		\begin{tikzpicture}
		\draw [dashed] ($(0,0) + (225:3)$) arc (225:240:3);
		\draw [dashed] ($(0,0) + (300:3)$) arc (300:315:3);
		\draw [ultra thick] ($(0,0) + (240:3)$) arc (240:270:3);
		\draw [ultra thick] ($(0,0) + (270:3)$) arc (270:300:3);
		\draw [dashed] ($(0,-1) + (210:2)$) arc (210:270:2);
		\draw [dashed] ($(0,-1) + (270:2)$) arc (270:330:2);
		\node [rotate=-30] at (-1.5,-2.61) {$($};
		\node [rotate=30] at (1.5,-2.61) {$)$};
		\draw (0,-2.6) circle [radius=0.1];
		\draw[fill=white] (0,-3) circle [radius=0.1];
		\draw[fill] (0,-3.4) circle [radius=0.1];
		\node [right] at (0.1,-2.7) {$ab$};
		\node [right] at (0.1,-3.2) {$ba$};
		\node [right] at (0.1,-3.6) {$aa$};
		\end{tikzpicture} \qquad
		\begin{tikzpicture}
		\draw [dashed] ($(0,-1) + (210:2)$) arc (210:225:2);
		\draw [dashed] ($(0,0) + (300:3)$) arc (300:315:3);
		\draw [dashed] ($(0,0) + (225:3)$) arc (225:270:3);
		\draw [ultra thick] ($(0,0) + (270:3)$) arc (270:300:3);
		\draw [ultra thick] ($(0,-1) + (225:2)$) arc (225:270:2);
		\draw [dashed] ($(0,-1) + (270:2)$) arc (270:330:2);
		\node [rotate=-45] at (-1.414,-2.414) {$($};
		\node [rotate=30] at (1.5,-2.61) {$)$};
		\draw[fill] (0,-2.6) circle [radius=0.1];
		\draw[fill=white] (0,-3) circle [radius=0.1];
		\draw (0,-3.4) circle [radius=0.1];
		\node [right] at (0.1,-2.7) {$ab$};
		\node [right] at (0.1,-3.2) {$ba$};
		\node [right] at (0.1,-3.6) {$aa$};
		\end{tikzpicture}
		\caption{Open neighborhoods of the three non-Hausdorff points in $X^u(\Per)/{\sim_0}$ for the $aab/ab$ solenoid.}
		\label{Figure-aab/ab-OpenNeighborhoods}
	\end{figure}
\end{example}

\begin{proposition}
	The quotient map $q: X^u(\Per) \to X^u(\Per)/{\sim_0}$ is a local homeomorphism.
\end{proposition}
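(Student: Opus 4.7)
The plan is to deduce the statement directly from the two key features of $G_0(\Per)$ established earlier: that it is \'etale (Proposition~\ref{GzeroOpenSubGrpAndEtale}) and that its \'etale topology agrees with the subspace topology from $X^u(\Per) \times X^u(\Per)$ (the proposition just after \ref{GzeroOpenSubGrpAndEtale}). Since $q$ is automatically continuous with respect to the quotient topology, I only need to check that $q$ is open and that each point has a neighborhood on which $q$ is injective; openness plus continuous local injectivity will then upgrade $q$ to a local homeomorphism.

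First I would verify that $q$ is an open map. For any open $V \subseteq X^u(\Per)$, one has
\[
q^{-1}(q(V)) \;=\; \{ \yn \in X^u(\Per) \: | \: \exists \zn \in V,\ \yn \sim_0 \zn \} \;=\; r(s^{-1}(V)),
\]
where $s,r : G_0(\Per) \to X^u(\Per)$ are the source and range maps. Because $G_0(\Per)$ is \'etale, $s$ is continuous and $r$ is a local homeomorphism, so $r(s^{-1}(V))$ is open in $X^u(\Per)$. Hence $q(V)$ is open by definition of the quotient topology.

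Next I would establish local injectivity of $q$ at an arbitrary point $\xn \in X^u(\Per)$. Since $G_0(\Per)$ is \'etale, the source map $s$ is a local homeomorphism, so there exists an open neighborhood $W \subseteq G_0(\Per)$ of the unit $(\xn,\xn)$ on which $s$ is injective. Using the coincidence of the \'etale topology on $G_0(\Per)$ with the subspace topology inherited from $X^u(\Per) \times X^u(\Per)$, I may shrink $W$ to a basic open set of the form $(V_1 \times V_2) \cap G_0(\Per)$ with $\xn \in V_1 \cap V_2$. Set $U = V_1 \cap V_2$. If $\yn, \zn \in U$ satisfy $\yn \sim_0 \zn$, then both $(\yn, \yn)$ and $(\yn, \zn)$ lie in $(V_1 \times V_2) \cap G_0(\Per) \subseteq W$ (reflexivity of $\sim_0$ places $(\yn,\yn) \in G_0(\Per)$), and $s$ sends these two elements to $\yn$ (or $\zn$, depending on convention; in either case the values force equality). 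Injectivity of $s|_W$ forces $\yn = \zn$, so $q|_U$ is injective.

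I expect the only delicate point to be this last local injectivity step: one must be careful to produce the neighborhood $U$ as a single open set in $X^u(\Per)$ (rather than only a neighborhood of $(\xn,\xn)$ in the product), and it is precisely the compatibility of the two topologies on $G_0(\Per)$ that lets one pass from the \'etale bisection $W$ to such a $U$. With continuity, openness, and local injectivity in hand, $q$ restricts to a continuous open bijection $U \to q(U)$ for each such $U$, which is a homeomorphism; therefore $q$ is a local homeomorphism.
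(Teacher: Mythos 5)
Your argument is correct, but it takes a genuinely different route from the paper. The paper's proof is a one-line citation: since $G_0(\Per)$ is \'etale (Proposition~\ref{GzeroOpenSubGrpAndEtale}), \cite[Lemma 4.2]{CHR} applies directly to the principal groupoid of the equivalence relation and yields that $q$ is a local homeomorphism. You instead reprove this special case of the \cite{CHR} lemma from scratch: openness of $q$ via $q^{-1}(q(V)) = r(s^{-1}(V))$ and the openness of the range map, and local injectivity by pulling an open bisection of $G_0(\Per)$ down to a basic product open set $(V_1 \times V_2) \cap G_0(\Per)$ and intersecting the factors. Your version has the virtue of being self-contained and of making explicit exactly where the coincidence of the \'etale topology with the subspace topology from $X^u(\Per) \times X^u(\Per)$ is used (namely, to extract the neighborhood $U = V_1 \cap V_2$ as an honest open subset of the unit space); the paper's version is shorter and leans on the \cite{CHR} machinery that it needs anyway for the Fell algebra theorem. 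One small wrinkle in your local injectivity step: with the convention $s(\xn,\yn) = \yn$, the two elements $(\yn,\yn)$ and $(\yn,\zn)$ have \emph{different} sources, so injectivity of $s|_W$ says nothing about that particular pair; you should instead compare $(\zn,\zn)$ with $(\yn,\zn)$ (both lie in $(V_1\times V_2)\cap G_0(\Per)$ because $U = V_1 \cap V_2$), or simply take $W$ to be an open bisection on which both $s$ and $r$ are injective. With that trivial adjustment the proof is complete.
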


\begin{proof}
The groupoid $G_0(\Per)$ is \'etale by Proposition \ref{GzeroOpenSubGrpAndEtale}. Hence \cite[Lemma 4.2]{CHR} implies that $q$ is a local homeomorphism.
\end{proof}

\begin{corollary}
	The quotient space $X^u(\Per)/{\sim_0}$ is locally metrizable.  In particular, it is locally Hausdorff.
\end{corollary}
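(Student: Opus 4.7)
The plan is to derive local metrizability directly from the preceding proposition, together with the fact that $X^u(\Per)$ itself is metrizable as a topological space.

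First I would recall why $X^u(\Per)$ is metrizable. By Theorem \ref{etaleTopThm}, the unit space $X^u(\Per) \subseteq G^s(\Per)$ inherits a locally compact, Hausdorff, second countable topology (as remarked just after Theorem \ref{wellKnownSmaleSpace}, this is the natural topology on the unstable set of a mixing Smale space, and $G^s(\Per)$ itself is second countable, locally compact, Hausdorff). Urysohn's metrization theorem then gives that $X^u(\Per)$ is metrizable. Alternatively, one can observe that on each local unstable set $X^u(\xn, \epsilon)$ the restriction of $d_X$ provides a compatible metric, and these local metrics can be glued (or one can just invoke second countability plus local compactness plus Hausdorffness).

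Next I would apply the preceding proposition. Since $q: X^u(\Per) \to X^u(\Per)/{\sim_0}$ is a local homeomorphism, each point $[\xn] \in X^u(\Per)/{\sim_0}$ admits an open neighborhood $V$ such that there exists an open set $U \subseteq X^u(\Per)$ for which $q|_U : U \to V$ is a homeomorphism. Since $U$ is an open subset of the metrizable space $X^u(\Per)$, it is metrizable, and hence so is $V$. This shows $X^u(\Per)/{\sim_0}$ is locally metrizable.

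Finally, since every metrizable space is Hausdorff, local metrizability immediately implies local Hausdorffness: each $[\xn]$ has an open metrizable (hence Hausdorff) neighborhood. I do not expect any real obstacle here, since the work has already been done in establishing that $q$ is a local homeomorphism and that $X^u(\Per)$ has the standard second-countable, locally compact, Hausdorff structure from the Putnam--Spielberg construction; the only minor point worth being explicit about is that one must use the topology on $X^u(\Per)$ described just after Theorem \ref{wellKnownSmaleSpace} (equivalently the subspace topology as the unit space of $G^s(\Per)$), rather than the subspace topology from $X$.
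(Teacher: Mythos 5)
Your argument is correct and is essentially the paper's own proof: the paper likewise notes that $X^u(\Per)$ is metrizable (citing its locally compact Hausdorff topology) and then concludes via the fact that $q$ is a local homeomorphism. The extra detail you supply (Urysohn via second countability, and pulling the metric back through local sections of $q$) just fills in what the paper leaves implicit.
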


\begin{proof} 
	The space $X^u(\Per)$ is metrizable because its topology is locally compact and Hausdorff, see for example \cite[Theorem 2.10]{Kil}. The result follows because $q$ is a local homeomorphism.
\end{proof}

\begin{definition}
	We let $r: X^u(\Per)/{\sim_0} \rightarrow Y$ be the map defined via
	\[
	[(x_i)_{i\in \N}] \mapsto x_0
	\]
\end{definition}
Note that $r$ is well-defined by the first condition in the definition of $\sim_0$. Moreover, $r\circ q = \pi_0: X^u(\Per) \rightarrow Y$ where $q$ denotes the quotient map $X^u(\Per) \rightarrow X^u(\Per)/{\sim_0}$.  By Proposition \ref{Proposition-FinitelyManyEquivalenceClasses}, $r$ is finite-to-one.  Since $q$ is a local homeomorphism and $\pi_0$ is a local embedding, it follows that $r$ is a local embedding.
\begin{definition}
	We let $\tilde{g} : X^u(\Per)/{\sim_0} \rightarrow X^u(\Per)/{\sim_0}$ be the map defined via
	\[
	[\xn]_0 \mapsto [\varphi(\xn)]_0, \qquad \text{equivalently} \qquad [(x_i)_{i\in \N}]_0 \mapsto [(g(x_i))_{i\in \N}]_0 
	\]
\end{definition}
\begin{theorem}
	The map $\tilde{g}$ defined in the previous definition is a finite-to-one, surjective, local homeomorphism. Moreover, it fits into the following commutative diagram:
	\begin{center}
		$\begin{CD}
		X^u(\Per) @>\varphi >> X^u(\Per) \\
		@Vq VV @Vq VV \\
		X^u(\Per)/{\sim_0} @>\tilde{g} >> X^u(\Per)/{\sim_0} \\
		@Vr VV @Vr VV \\
		Y @>g>> Y
		\end{CD}$
	\end{center}
	
\end{theorem}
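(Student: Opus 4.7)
The plan is to handle each property of $\tilde{g}$ in turn, then verify commutativity of the diagram by direct inspection.

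First I would check that $\tilde{g}$ is well-defined, i.e., that $\xn \sim_0 \yn$ implies $\varphi(\xn) \sim_0 \varphi(\yn)$. Proposition \ref{GkIsoGzero} gives precisely this: since $G_0(\Per) \subseteq G_1(\Per)$ and $\varphi \times \varphi : G_1(\Per) \to G_0(\Per)$ is a bijective isomorphism of topological groupoids, any $(\xn, \yn) \in G_0(\Per)$ is sent into $G_0(\Per)$. Commutativity of the diagram is then immediate: $\tilde{g} \circ q = q \circ \varphi$ holds by the definition of $\tilde{g}$, while $r \circ \tilde{g}([\xn]_0) = r([\varphi(\xn)]_0) = g(x_0) = g \circ r([\xn]_0)$.

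For the local homeomorphism property, I would combine the facts that $q : X^u(\Per) \to X^u(\Per)/{\sim_0}$ is a local homeomorphism (shown just above in this section) and that $\varphi : X^u(\Per) \to X^u(\Per)$ is a homeomorphism (since $\Per$ is $\varphi$-invariant, so $\varphi$ restricts to a bijection of $X^u(\Per)$, and the topology on $X^u(\Per)$ is dynamical in nature). Given $[\xn]_0$, choose a representative $\xn$ and an open neighborhood $V$ of $\xn$ on which $q|_V$ is a homeomorphism. Shrink $V$ to a smaller open $W \ni \xn$ so that $q$ also restricts to a homeomorphism on $\varphi(W)$. Then on the open set $q(W)$, the map $\tilde{g}$ factors as $(q|_{\varphi(W)}) \circ \varphi \circ (q|_W)^{-1}$, a composition of three homeomorphisms.

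Surjectivity of $\tilde{g}$ follows from the bijectivity of $\varphi$ on $X^u(\Per)$: given $[\yn]_0$, take $\xn = \varphi^{-1}(\yn) \in X^u(\Per)$ and observe $\tilde{g}([\xn]_0) = [\yn]_0$. For the finite-to-one property, any preimage $[\xn]_0$ satisfies $\varphi(\xn) \sim_0 \yn$; writing $\varphi(\xn) = (g(x_0), x_0, x_1, \ldots)$, the equality of zeroth coordinates required by $\sim_0$ forces $g(x_0) = y_0$, so $x_0$ lies in the finite set $g^{-1}(y_0)$ (finite because $g$ is finite-to-one by \cite[Lemma 3.4]{Wie}). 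For each such $x_0$, Proposition \ref{Proposition-FinitelyManyEquivalenceClasses} bounds the number of $\sim_0$-classes $[\xn]_0$ with $\pi_0(\xn) = x_0$ by a finite number, so summing over $g^{-1}(y_0)$ shows the fiber is finite.

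The main potential obstacle is the well-definedness step, since one might worry that the open-set condition in Definition \ref{tildeZeroDef} interacts subtly with $\varphi$ (which expands unstable directions and could a priori destroy the neighborhood structure encoded by $\delta_{\xn}$ and $U$). Fortunately this worry is entirely absorbed by Proposition \ref{GkIsoGzero}, which already records that $\varphi \times \varphi$ maps $G_1(\Per)$ onto $G_0(\Per)$ as groupoids; combined with the elementary inclusion $G_0(\Per) \subseteq G_1(\Per)$, the well-definedness reduces to a one-line bookkeeping argument and no further metric analysis is required.
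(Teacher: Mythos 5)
Your proof is correct and follows essentially the same route as the paper: well-definedness via $G_0(\Per) \subseteq G_1(\Per)$ together with the isomorphism $\varphi \times \varphi: G_1(\Per) \to G_0(\Per)$, and the remaining properties deduced from the commutative diagram using that $\varphi$ is a homeomorphism of $X^u(\Per)$, $q$ is a surjective local homeomorphism, and $g$ and $r$ are finite-to-one. The paper merely states that these properties "follow from the commutativity of the diagram and the properties of the other maps," whereas you have written out the local lifting argument and the fiber-counting explicitly; these are exactly the details the paper leaves implicit.
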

\begin{proof}
	If $\xn \sim_0 \yn$, then $\xn \sim_1 \yn$ because $G_0(\Per) \subseteq G_1(\Per)$.  So $\varphi(\xn) \sim_0 \varphi(\yn)$ and consequently $\tilde{g}$ is well-defined.  The commutativity of the diagram is immediate. The stated properties of $\tilde{g}$ follow from the commutativity of the diagram and the properties of the other maps: $\varphi$ is a homeomorphism, $q$ is a surjective local homeomorphism, and $r$ and $g$ are finite-to-one.
\end{proof}

\begin{example} \label{Example-aab/ab-InducedMapOnQuotient}
	Recall that the quotient space for the $aab/ab$-solenoid has the form:
	
	\begin{figure}[h]
		\begin{tikzpicture}
		\draw (0,0) circle [radius=3];
		\draw (0,-1) circle [radius=2];
		\draw[fill] (0,-2.6) circle [radius=0.1];
		\draw[fill] (0,-3) circle [radius=0.1];
		\draw[fill] (0,-3.4) circle [radius=0.1];
		\node [right] at (0.1,-2.7) {$ab$};
		\node [right] at (0.1,-3.2) {$ba$};
		\node [right] at (0.1,-3.6) {$aa$};
		\draw (0,.8) -- (0,1.2);
		\draw (-2.42487,1.4) -- (-2.77128,1.6);
		\draw (2.42487,1.4) -- (2.77128,1.6);
		\node [above] at (0,3.1) {$a$};
		\node [left] at (-2.77128,-1.6) {$a$};
		\node [right] at (2.77128,-1.6) {$b$};
		\node [right] at (-1.9,-1) {$a$};
		\node [left] at (1.9,-1) {$b$};
		\node [below] at (0,0.8) {$v$};
		\node [right] at (-2.42487,1.3) {$s$};
		\node [left] at (2.42487,1.3) {$t$};
		\end{tikzpicture}
		\caption{$X^u(\Per)/{\sim_0}$ for $aab/ab$ solenoid.}
		\label{Figure-aab/ab-tilde{g}}
	\end{figure}
The map $\tilde{g}$ is exactly like $g$ for all points excluding $s, t, v, ab, ba, aa$.  For these points,
\[ \tilde{g}(s) = aa,\quad \tilde{g}(t) = \tilde{g}(v) = ab, \quad \tilde{g}(ab) = \tilde{g}(ba) = \tilde{g}(aa) = ba. \]
\end{example}

We need some lemmas before proving $X^u(\Per)/{\sim_0}$ is compact.

\begin{lemma} \label{CompactSubsetAndOnto}
	There exists $K_1>0$ such that the map 
	\[ \pi_0 \big\rvert_{\varphi^{K_1} \left( \overline{X^u\left(\Per, \frac{\beta}{4} \right)} \right)}: \varphi^{K_1} \left( \overline{X^u\left(\Per, \frac{\beta}{4} \right)} \right) \rightarrow Y \]
	is onto.
\end{lemma}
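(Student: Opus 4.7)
The plan is to set $F_n := \pi_0(\varphi^n(\overline{X^u(\Per, \beta/4)})) \subseteq Y$ and to show that $F_{K_1} = Y$ for some finite $K_1$. I will do this in three stages: establish that the $F_n$ are closed, form an increasing family, and together exhaust $Y$; apply the Baire category theorem to extract an $F_{n_0}$ with non-empty interior; and finally use topological mixing to upgrade this open seed to all of $Y$ after finitely many further iterations of $\varphi$.

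For the first stage, each $\overline{X^u(p,\beta/4)}$ is compact in the intrinsic locally compact Hausdorff topology of $X^u(\Per)$ (cf.~\cite[Theorem 2.10]{Kil}, using $\beta/4 < \epsilon_X$), so the finite union $\overline{X^u(\Per,\beta/4)}$ is compact and each $F_n$ is closed in $Y$ by continuity of $\pi_0$. The inclusion $F_n \subseteq F_{n+1}$ reduces to $\overline{X^u(\Per,\beta/4)} \subseteq \varphi(\overline{X^u(\Per,\beta/4)})$, equivalently $\varphi^{-1}(\overline{X^u(\Per,\beta/4)}) \subseteq \overline{X^u(\Per,\beta/4)}$; this holds because $\varphi^{-1}$ contracts local unstable sets by the factor $\gamma<1$ (axiom C2 of Definition~\ref{SmaSpaDef}) and $\varphi^{-1}(\Per) = \Per$ by finiteness and $\varphi$-invariance. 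The identity $\bigcup_n F_n = Y$ follows from Theorem~\ref{wellKnownSmaleSpace}(3) applied at each $p \in \Per$: since $\varphi^{-n}(p) \in \Per$ for all $n$, one gets $X^u(\Per) = \bigcup_n \varphi^n(X^u(\Per,\beta/4))$, and applying $\pi_0$ together with its surjectivity (established just after its definition in Section~3) yields the claim.

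For the final two stages, $Y$ is a compact Hausdorff space hence Baire, so the closed exhaustion $\{F_n\}$ forces some $F_{n_0}$ to have non-empty interior $U \subseteq F_{n_0}$. The intertwining relation $\pi_0 \circ \varphi = g \circ \pi_0$ gives $F_{n_0+m} = g^m(F_{n_0}) \supseteq g^m(U)$, so it suffices to produce some $m$ with $g^m(U)$ dense in $Y$; at that point the closedness of $F_{n_0+m}$ will force $F_{n_0+m} = Y$, and I set $K_1 := n_0 + m$. The density reduces to topological mixing of $(Y,g)$, which descends from mixing of $(X,\varphi)$: for any non-empty open $W \subseteq Y$, mixing applied to the open sets $\pi_0^{-1}(U), \pi_0^{-1}(W) \subseteq X$ together with the identity $\pi_0 \circ \varphi^m \circ \pi_0^{-1} = g^m$ (using surjectivity of $\pi_0$) yields $g^m(U) \cap W \neq \emptyset$ uniformly for large $m$. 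The main subtlety I expect to contend with is the careful bookkeeping of the two topologies on $X^u(\Per)$ — the intrinsic locally compact topology (in which $\overline{X^u(\Per,\beta/4)}$ is compact, $\pi_0$ is continuous, and $\varphi$ restricts to a homeomorphism) versus the subspace topology from $X$ (in which $X^u(\Per)$ is only dense); every compactness and continuity assertion above must be interpreted in the intrinsic topology.
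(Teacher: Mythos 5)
Your stages 1 and 2 are sound: the sets $F_n = \pi_0\bigl(\varphi^n\bigl(\overline{X^u(\Per,\beta/4)}\bigr)\bigr)$ are compact (hence closed), increasing, and exhaust $Y$, and your bookkeeping of the intrinsic versus subspace topologies is handled correctly (the closure of $X^u(\Per,\beta/4)$ sits inside $X^u(\Per,\beta/2)$, where the two topologies agree). The Baire step then legitimately produces $n_0$ and a non-empty open $U\subseteq F_{n_0}$. The gap is in stage 3, in the sentence deducing from topological mixing of $(Y,g)$ that a \emph{single} $m$ exists with $g^m(U)$ dense. Mixing gives, for each non-empty open $W$, a threshold $N_W$ with $g^m(U)\cap W\neq\emptyset$ for all $m\ge N_W$; to make one fixed $g^m(U)$ dense you need this for all $W$ simultaneously, and $\sup_W N_W$ may be infinite. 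The implication ``mixing implies some forward image of a non-empty open set is dense'' is false in general: a mixing \emph{homeomorphism} of an infinite compact metric space (the two-sided full shift, or a hyperbolic toral automorphism) never sends a small open set to a dense set. Nor can a compactness patch close it: covering $Y$ by finitely many $\epsilon$-balls only makes $g^m(U)$ eventually $\epsilon$-dense for each $\epsilon$, and an increasing sequence of closed sets that is eventually $\epsilon$-dense for every $\epsilon$ need not stabilize at $Y$ (consider $F_n=\{0\}\cup\{1/k : k\le n\}$ in $\{0\}\cup\{1/k\}$).

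What your argument never invokes, and what is genuinely needed, is Wieler's Axiom 2 --- the eventual openness of $g$. That is the ingredient implicit in the paper's proof, which proceeds differently: instead of Baire, it asserts that for each $y\in Y$ the \emph{entire} ball $B(y,\beta/4)$ is contained in $\pi_0\bigl(\varphi^{k_y}(X^u(\Per,\beta/4))\bigr)$ for some finite $k_y$ --- this ``balls are swallowed whole'' property comes from the description of local unstable sets in Theorem \ref{WielerTheorem} together with Axiom 2, which guarantees that pushing a local unstable set forward by $\varphi$ and projecting by $\pi_0$ eventually covers a ball in $Y$ --- and then uses compactness of $Y$ to pass to a finite subcover and take $K_1$ to be the maximum of finitely many $k_{y_i}$, exploiting the monotonicity of the $F_n$ exactly as you do. So your outer architecture (monotone exhaustion plus compactness of $Y$) matches the paper's, but the heart of the lemma is the openness-after-iteration of $g$, which must be imported from the expansion axiom; mixing alone cannot supply it.
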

\begin{proof}
	Since $(X, \varphi)$ is mixing, $X^u(\Per)$ is dense in $X$ and hence, the map $\pi_0\big\rvert_{X^u(\Per)}$ is onto. Secondly, using Theorem \ref{wellKnownSmaleSpace} and the fact that $\Per$ is $\varphi$-invariant, $X^u(\Per) = \bigcup_{k\in \N} \varphi^{k} (X^u(\Per, \frac{\beta}{2}))$. For each $y\in Y$, let $k_y$ be the smallest natural number such that 
	\[
	B\left(y, \frac{\beta}{4}\right) \subseteq \pi_0\left( \varphi^{k_y}\left( X^u\left(\Per, \frac{\beta}{4}\right)\right)\right) 
	\]
	Then $\left\{ B\left(y, \frac{\beta}{4}\right) \right\}_{y\in Y}$ is an open cover of $Y$ and since $Y$ is compact it has a finite subcover, which we denote by $\left\{ B\left(y_i , \frac{\beta}{4}\right) \right\}_{i=1}^m$. Then $K_1 = \max \{ k_{y_i} \mid 1 \le i \le m\}$ has the required property.
\end{proof}
\begin{lemma} \label{KtwoExistLemma}
	There exists $K_2>0$ such that for any $\xn \in X^u(P)$, there exists $\widetilde{\xn} \in \varphi^{K_2} \left( X^u\left(\Per, \frac{\beta}{2}\right) \right)$ and $\xn \sim_0 \widetilde{\xn}$.
\end{lemma}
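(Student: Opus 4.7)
The plan is to reduce the problem to matching a single coordinate using Lemma \ref{tildeZeroEquivIfKzeroEqual}, and then apply Lemma \ref{CompactSubsetAndOnto}. The key observation is that for any $\xn = (x_n)_{n \in \N} \in X^u(\Per)$, the compatibility condition $g(x_{n+1}) = x_n$ forces $x_n = g^{K_0 - n}(x_{K_0})$ for $0 \le n \le K_0$. Consequently, the first $K_0 + 1$ coordinates of $\xn$ are completely determined by $x_{K_0}$, so by Lemma \ref{tildeZeroEquivIfKzeroEqual} the construction of $\widetilde{\xn} \sim_0 \xn$ reduces to arranging the single equality $\widetilde{x}_{K_0} = x_{K_0}$.

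Let $K_1$ be the constant from Lemma \ref{CompactSubsetAndOnto}, and set $K_2 := K_0 + K_1$. Given $\xn \in X^u(\Per)$, I apply the surjectivity in Lemma \ref{CompactSubsetAndOnto} to the point $x_{K_0} \in Y$ to produce some $\zn \in \varphi^{K_1}(\overline{X^u(\Per, \beta/4)})$ with $\pi_0(\zn) = z_0 = x_{K_0}$. Since continuity of the bracket map together with the strict inequality $\beta/4 < \beta/2$ gives $\overline{X^u(\Per, \beta/4)} \subseteq X^u(\Per, \beta/2)$, we have $\zn \in \varphi^{K_1}(X^u(\Per, \beta/2))$. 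Define $\widetilde{\xn} := \varphi^{K_0}(\zn)$, so $\widetilde{\xn} \in \varphi^{K_0 + K_1}(X^u(\Per, \beta/2)) = \varphi^{K_2}(X^u(\Per, \beta/2))$, as required.

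To conclude, a direct computation using the formula $\varphi(y_0, y_1, y_2, \ldots) = (g(y_0), y_0, y_1, \ldots)$ iterated $K_0$ times shows that $\widetilde{x}_n = g^{K_0 - n}(z_0)$ for $0 \le n \le K_0$; since $z_0 = x_{K_0}$, the first paragraph gives $\widetilde{x}_n = g^{K_0-n}(x_{K_0}) = x_n$ throughout this range. Lemma \ref{tildeZeroEquivIfKzeroEqual} then yields $\xn \sim_0 \widetilde{\xn}$.

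The main obstacle is essentially bookkeeping: one must identify the correct coordinate (namely $K_0$, rather than $0$) to target so that the zeroth-coordinate surjectivity supplied by Lemma \ref{CompactSubsetAndOnto} becomes, after composing with $\varphi^{K_0}$, a $K_0$-th coordinate match. The only non-cosmetic point is the containment $\overline{X^u(\Per, \beta/4)} \subseteq X^u(\Per, \beta/2)$ needed to land $\widetilde{\xn}$ in the right set, and this is immediate from continuity of the bracket. Beyond these two ingredients, the argument is a short verification.
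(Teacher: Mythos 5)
Your proof is correct and follows essentially the same route as the paper's: both reduce matters to matching the $K_0$-th coordinate via Lemma \ref{tildeZeroEquivIfKzeroEqual} and then use the surjectivity of $\pi_0$ on $\varphi^{K_1}\bigl(\overline{X^u(\Per,\beta/4)}\bigr)$ from Lemma \ref{CompactSubsetAndOnto}; your $\widetilde{\xn}=\varphi^{K_0}(\zn)$ is literally the same point the paper writes out coordinate-by-coordinate. The only difference is the harmless choice of constant ($K_0+K_1$ versus the paper's $K_0+K_1+1$), which does not affect the statement.
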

\begin{proof}
	Let $K_1$ be as in the previous lemma. We show $K_2=K_0 + K_1+1$ has the required property. Suppose $\xn \in X^u(P)$. Then $x_{K_0} \in Y$ and there exists $\yn \in X^u(\Per, \frac{\beta}{2})$ such that $\pi_0(\varphi^{K_1}( \yn )) = x_{K_0}$. It follows that 
	\[
	\widetilde{\xn} := \left(x_0, x_1, \ldots, x_{K_0}, g^{K_1-1}(y_0) , \ldots , g(y_0), y_0, y_1, \ldots \right)
	\]
	is an element of $\varphi^{K_2} (X^u(P, \frac{\beta}{2}))$. Moreover, by Lemma \ref{tildeZeroEquivIfKzeroEqual}, $\widetilde{\xn} \sim_0 \xn$. Thus $K_2$ has the required property.
\end{proof}

The previous result somewhat informally implies that the relation $\sim_0$ is ``local"; more precisely, we have the following result:
\begin{corollary} \label{locCor}
	There exists an \'etale equivalence relation on $X^u(\Per, \epsilon_X)$ with respect to the subspace topology such that the associated $C^*$-algebra is Morita equivalent to $C^*(G_0(\Per))$.
\end{corollary}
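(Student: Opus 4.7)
The plan is to realize (a copy of) $X^u(\Per, \epsilon_X)$ as an open abstract transversal for the étale equivalence relation $G_0(\Per)$ on its unit space $X^u(\Per)$, and then invoke the standard fact that the reduction of an étale groupoid to such a transversal produces a Morita equivalent $C^*$-algebra (the groupoid version of the Muhly--Renault--Williams equivalence theorem).

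Concretely, I would set $T := \varphi^{K_2}\bigl(X^u(\Per, \beta/2)\bigr)$, where $K_2$ is the constant produced by Lemma \ref{KtwoExistLemma}. Since $\varphi$ restricts to a homeomorphism of $X^u(\Per)$ in the unstable topology (i.e.\ the topology inherited from the diagonal of $G^s(\Per)$), and $X^u(\Per, \beta/2)$ is open there, $T$ is an open subset of $X^u(\Per)$. The content of Lemma \ref{KtwoExistLemma} is exactly that every $\sim_0$-class meets $T$, so $T$ is a full open transversal. I would then form the reduction
\[ R := \{(\xn,\yn) \in T\times T : \xn \sim_0 \yn\}. \]
As an open subgroupoid of the étale groupoid $G_0(\Per)$, $R$ is itself étale, and it is an equivalence relation on $T$. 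The abstract-transversal theorem then yields that $C^*(R)$ is Morita equivalent to $C^*(G_0(\Per))$.

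To put the relation on $X^u(\Per, \epsilon_X)$ itself, I would transport $R$ back along the homeomorphism $\varphi^{K_2} : X^u(\Per, \beta/2) \to T$. The resulting relation $\sim'$ on $X^u(\Per, \epsilon_X)$ is nothing but the restriction of $\sim_{K_2}$ to this set, and its groupoid $C^*$-algebra is isomorphic to $C^*(R)$, hence Morita equivalent to $C^*(G_0(\Per))$. The final piece to check is that the topology inherited from $X^u(\Per)$ on $X^u(\Per, \epsilon_X)$ agrees with the subspace topology from $X$, which is the standard fact for Smale spaces that on a local unstable set the intrinsic unstable topology coincides with the subspace topology from the ambient space.

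The main obstacle I anticipate is essentially bookkeeping: carefully stating and invoking the abstract-transversal Morita equivalence in the étale setting, and reconciling the several topologies in play ($G^s(\Per)$, $G_0(\Per)$ as a subspace of $X^u(\Per)\times X^u(\Per)$, and the subspace topology from $X$). Everything else --- openness and fullness of $T$, étaleness of $R$, the transport by $\varphi^{K_2}$, and the identification of the transported relation with the restriction of $\sim_{K_2}$ to $X^u(\Per,\epsilon_X)$ --- is a direct consequence of the results already established in this section.
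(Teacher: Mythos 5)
Your proposal is correct and is exactly the argument the paper intends (the paper states this corollary without proof, as an immediate consequence of Lemma \ref{KtwoExistLemma}): $\varphi^{K_2}\bigl(X^u(\Per,\beta/2)\bigr)$ is a full open transversal for $G_0(\Per)$ by that lemma, the reduction to it is an \'etale equivalence relation whose $C^*$-algebra is a full corner of $C^*(G_0(\Per))$, and pulling back along the homeomorphism $\varphi^{K_2}$ places the relation on $X^u(\Per,\epsilon_X)=X^u(\Per,\beta/2)$, where the unstable topology agrees with the subspace topology from $X$.
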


In specific cases the local unstable set (i.e., $X^u(\Per, \epsilon_X)$) is quite tractable. For example, in the case of a Williams solenoid it is always a finite disjoint union of open balls in Euclidean space.

\begin{proposition}
	The topological space $X^u(\Per)/{\sim_0}$ is compact.
\end{proposition}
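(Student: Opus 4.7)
The plan is to realize $X^u(\Per)/{\sim_0}$ as the continuous image of a compact subset of $X^u(\Per)$. Since the quotient map $q$ is continuous by the definition of the quotient topology, this will suffice.

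First, I would invoke the standard structure of the Smale space: in the locally compact Hausdorff topology on $X^u(\Per)$ built from local unstable sets (as discussed after Theorem \ref{wellKnownSmaleSpace}), for each $p \in \Per$ and each sufficiently small $\epsilon > 0$, the open set $X^u(p,\epsilon)$ is precompact. Since $\Per$ is finite, the set
\[
C := \overline{X^u(\Per, \beta/2)}
\]
(shrinking $\beta/2$ slightly if needed to secure precompactness, which does not disturb Lemma \ref{KtwoExistLemma}) is a finite union of compact closures, hence compact in $X^u(\Per)$.

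Next, since $\varphi^{K_2}$ is a homeomorphism of $X^u(\Per)$, the set $\varphi^{K_2}(C)$ remains compact, and therefore $q(\varphi^{K_2}(C))$ is a compact subset of the quotient space. To finish, I would verify the surjectivity statement $q(\varphi^{K_2}(C)) = X^u(\Per)/{\sim_0}$: given any equivalence class $[\xn]_0$, Lemma \ref{KtwoExistLemma} produces $\widetilde{\xn} \in \varphi^{K_2}(X^u(\Per, \beta/2)) \subseteq \varphi^{K_2}(C)$ with $\xn \sim_0 \widetilde{\xn}$, so $[\xn]_0 = q(\widetilde{\xn})$ lies in $q(\varphi^{K_2}(C))$. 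The reverse inclusion is automatic.

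There is no serious obstacle here; essentially all the content is already packaged into Lemma \ref{KtwoExistLemma}, which asserts that every $\sim_0$-class meets the $\varphi^{K_2}$-translate of a bounded region. The only subtle point is that one must work with the intrinsic topology on $X^u(\Per)$ rather than the subspace topology from $X$: only the intrinsic topology makes the local unstable sets have compact closure, and only in this topology is the quotient map $q$ continuous to the space whose compactness we are establishing.
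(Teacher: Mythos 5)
Your argument is correct and is essentially the paper's proof: both rest on the compactness of $\varphi^{K_2}\bigl(\overline{X^u(\Per,\beta/2)}\bigr)$ in the intrinsic topology on $X^u(\Per)$ together with Lemma \ref{KtwoExistLemma}, which guarantees every $\sim_0$-class meets that set. The paper phrases this by extracting a finite subcover directly, while you package it as ``the continuous image of a compact set is compact''; the two are interchangeable.
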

\begin{proof}
	Let $\{ U_{\alpha} \}_{\alpha \in \Lambda}$ be an open cover of $X^u(\Per)/{\sim_0}$. We show it has a finite subcover. As above, $q : X^u(\Per) \rightarrow X^u(\Per) /{\sim_0}$ denotes the quotient map. By the definition of the quotient topology, $\{ q^{-1} ( U_{\alpha} ) \}_{\alpha \in \Lambda}$ is an open cover of $X^u(\Per)$. Suppose $K_2>0$ satisfies the conditions of the previous lemma and note that $\{ q^{-1} ( U_{\alpha} ) \}_{\alpha \in \Lambda}$ is an open cover of the compact set $\varphi^{K_2} ( \overline{X^u(\Per, \frac{\beta}{2})} )$. As such there is a finite subcover of $\varphi^{K_2} ( \overline{X^u(\Per, \frac{\beta}{2})} )$, we denote it by $\{ q^{-1} ( U_{\alpha_i} ) \}_{i=1}^m$. 
	
	We show that $\{ U_{\alpha_i} \}_{i=1}^m$ covers $X^u(\Per)/{\sim_0}$. Suppose $[\xn] \in X^u(\Per)/{\sim_0}$. By the previous lemma there exists $\widetilde{\xn} \in \varphi^{K_2} ( X^u(\Per, \frac{\beta}{2}) )$ and $\widetilde{\xn} \sim_0 \xn$. Since $\{ q^{-1} ( U_{\alpha_i} ) \}_{i=1}^m$ covers $\varphi^{K_2} ( \overline{X^u(\Per, \frac{\beta}{2})} )$, there exists $\alpha_{i_0}$ such that $\widetilde{\xn} \in q^{-1}(U_{\alpha_{i_0}})$. Hence $[\xn]_0= [\widetilde{\xn}]_0 \in U_{\alpha_{i_0}}$ as required.
	
\end{proof}

\section{The structure of $C^*(G_0(\Per))$}
We now consider the structure of $C^*(G_0(\Per))$ in more detail.

\begin{theorem}
The $C^*$-algebra, $C^*(G_0(\Per))$ is a Fell algebra with trivial Dixmier-Douady invariant and spectrum $X^u(\Per)/{\sim_0}$. 
\end{theorem}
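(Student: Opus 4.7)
The plan is to apply the characterization of Fell algebras arising from \'etale groupoids given by Clark--an Huef--Raeburn \cite{CHR} and an Huef--Kumjian--Sims \cite{HKS}. The crucial prerequisites for applying their machinery are already in hand: $G_0(\Per)$ is an \'etale groupoid (Proposition \ref{GzeroOpenSubGrpAndEtale}); it is a principal groupoid, being the graph of the equivalence relation $\sim_0$; the unit space $X^u(\Per)$ is locally compact Hausdorff (and in fact metrizable); and the topology on $G_0(\Per)$ coincides with the subspace topology from $X^u(\Per) \times X^u(\Per)$, as shown in the proposition preceding Lemma \ref{tildeZeroEquivIfKzeroEqual}.

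First I would recall from \cite{CHR} (see in particular Lemma 4.2 there, already cited for the local homeomorphism property of $q$) that for a principal \'etale groupoid on a locally compact Hausdorff unit space whose topology matches the product topology on $G^{(0)} \times G^{(0)}$, each isotropy group is trivial and the groupoid is a ``Cartan groupoid'' in the sense that yields a Fell $C^*$-algebra. In particular $C^*(G_0(\Per))$ is a Fell algebra. Identifying the spectrum then follows from the standard description of primitive ideals in $C^*(G_0(\Per))$: each unit $\xn \in X^u(\Per)$ gives an irreducible representation on $\ell^2([\xn]_0)$, two such representations are unitarily equivalent if and only if the orbits coincide, and the hull-kernel topology on the set of equivalence classes matches the quotient topology on $X^u(\Per)/{\sim_0}$ precisely because $q$ is a local homeomorphism. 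By Proposition \ref{infiniteDiscreteSet} each fiber $\ell^2([\xn]_0)$ is a separable infinite-dimensional Hilbert space, which is consistent with the Fell algebra picture.

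For the Dixmier--Douady invariant I would appeal to \cite[Theorem 7.13]{HKS} (or the analogous result in \cite{CHR}), which computes the Dixmier--Douady class of a Fell algebra coming from this kind of groupoid in terms of a \v{C}ech cocycle obtained from the groupoid \'etale structure. In our case, because $G_0(\Per)$ is genuinely an equivalence relation with no isotropy and with topology matching the product topology, one can choose an open cover of $X^u(\Per)/{\sim_0}$ by Hausdorff sets over which the quotient map $q$ admits continuous sections whose graphs lie in $G_0(\Per)$, and the resulting transition bisections patch together into the identity cocycle. Thus the Dixmier--Douady invariant vanishes.

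The main obstacle is not really conceptual but bookkeeping: one must match the precise hypotheses of \cite{CHR, HKS} (where the groupoids are typically presented abstractly) with our concrete subgroupoid of $G^s(\Per)$ carrying its own \'etale topology inherited from the Putnam--Spielberg construction. The comparison of topologies established earlier in this section is exactly what removes this obstacle, since it lets one verify that the ``product topology'' hypothesis of \cite{CHR, HKS} holds for $G_0(\Per)$ with its intrinsic \'etale topology. Once this identification is made, both the Fell property and the triviality of the Dixmier--Douady invariant are direct consequences of their results.
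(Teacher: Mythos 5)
Your proposal is correct and follows essentially the same route as the paper: the key input in both is that $q: X^u(\Per) \to X^u(\Per)/{\sim_0}$ is a local homeomorphism, so that $G_0(\Per)$ (with its topology agreeing with the subspace topology from $X^u(\Per)\times X^u(\Per)$) is exactly the groupoid $R(q)$ of an equivalence relation induced by a local homeomorphism, to which the results of \cite{CHR, HKS} apply. The paper simply cites Theorem 6.1 of \cite{CHR}, which packages in one statement the Fell property, the identification of the spectrum with $X^u(\Per)/{\sim_0}$, and the vanishing of the Dixmier--Douady invariant that you reconstruct by hand.
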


\begin{proof}
The quotient map $q: X^u(\Per) \rightarrow X^u(\Per)/{\sim_0}$ is a local homeomorphism. Therefore $G_0(\Per)$ is the groupoid of an equivalence relation induced by a local homeomorphism, as in \cite{CHR}.  The statement then follows from Theorem 6.1 of \cite{CHR}. 
\end{proof}

\begin{corollary} \label{Corollary-HausdorffC*algebra}
Suppose that $X^u(\Per)/{\sim_0}$ is Hausdorff. Then, we have that 
\[ C^*(G_0(\Per)) \cong C(X^u(\Per)/{\sim_0})\otimes \mathcal{K}(\mathcal{H}).\]
\end{corollary}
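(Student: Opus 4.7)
The plan is to combine the previous theorem with standard structural results for Fell / continuous-trace $C^*$-algebras. By the theorem immediately preceding this corollary, $C^*(G_0(\Per))$ is a Fell algebra with spectrum $X^u(\Per)/{\sim_0}$ and trivial Dixmier--Douady invariant. A Fell algebra whose spectrum is Hausdorff is, by definition/standard equivalence, a continuous-trace $C^*$-algebra; so under the Hausdorff hypothesis we are in the classical continuous-trace setting with spectrum $X^u(\Per)/{\sim_0}$.

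Next I would invoke the Dixmier--Douady classification: a continuous-trace $C^*$-algebra with paracompact Hausdorff spectrum $T$ is determined up to Morita equivalence by its Dixmier--Douady class in $H^3(T, \Z)$, and trivial class means the algebra is Morita equivalent to $C_0(T)$. The spectrum $X^u(\Per)/{\sim_0}$ was shown to be compact in Section 4, so $C_0(T) = C(T)$. Morita equivalence between separable $C^*$-algebras that are both $C^*$-stable upgrades to isomorphism (this is Brown--Green--Rieffel, or equivalently the $\sigma$-unital stabilization theorem). Here $C^*(G_0(\Per))$ is $C^*$-stable by (the $G_0(\Per)$-analogue of) the appendix's stability argument, which is also part of the statement of the main theorem; and $C(T) \otimes \K(\H)$ is of course $C^*$-stable. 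Hence the Morita equivalence $C^*(G_0(\Per)) \sim_M C(X^u(\Per)/{\sim_0})$ lifts to a $*$-isomorphism
\[ C^*(G_0(\Per)) \cong C(X^u(\Per)/{\sim_0}) \otimes \K(\H). \]

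The substantive ingredients are all external: the identification Fell $+$ Hausdorff spectrum $=$ continuous trace, the Dixmier--Douady classification with trivial class, and the Brown--Green--Rieffel theorem. Nothing further about the dynamics is needed beyond what is already packaged in the previous theorem, so the main obstacle is essentially bookkeeping: verifying that the hypotheses of Brown--Green--Rieffel apply (separability, $\sigma$-unitality, $C^*$-stability of both sides), which is immediate since $G_0(\Per)$ is second countable and $C^*$-stable. The compactness of $X^u(\Per)/{\sim_0}$ is precisely what lets us write $C$ rather than $C_0$ in the final answer.
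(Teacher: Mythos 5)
Your proof is correct and takes essentially the same route as the paper's: Hausdorff spectrum upgrades the Fell algebra to a continuous-trace algebra, trivial Dixmier--Douady class together with compactness of $X^u(\Per)/{\sim_0}$ gives Morita equivalence with $C(X^u(\Per)/{\sim_0})$, and $C^*$-stability (from the appendix) upgrades this to a $*$-isomorphism. The only difference is that you spell out the Brown--Green--Rieffel step and the $\sigma$-unitality bookkeeping, which the paper leaves implicit.
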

\begin{proof}
Since $X^u(\Per)/{\sim_0}$ is Hausdorff, $C^*(G_0(\Per))$ is a continuous-trace $C^*$-algebra with trivial Dixmier-Douady invariant. Using this and the fact that $X^u(\Per)/{\sim_0}$ is compact, we have that $C^*(G_0(\Per))$ is Morita equivalence to $C(X^u(\Per)/{\sim_0})$. Finally, $C^*(G_0(\Per))$ is stable (see Appendix \ref{Section-C*-Stability}), which implies the result.
\end{proof}

\begin{example}
If $g: Y \rightarrow Y$ is a local homeomorphism, then by Example \ref{localHomeoCaseSpaceLevel} $X^u(\Per)/{\sim_0}=Y$ and the previous corollary then implies that in this special case $C^*(G_0(\Per)) \cong C(Y) \otimes \mathcal{K}(\mathcal{H})$.
\end{example}

Returning to the case of an arbitrary Wieler solenoid, we have the following:
\begin{theorem} \label{idealStructure}
There exist finitely many ideals $I_1, \ldots, I_N$ of $C^*(G_0(\Per))$ that generate $C^*(G_0(\Per))$, each of which has the form 
\[ I_i \cong C_0(X_i) \otimes \mathcal{K}(\mathcal{H})
\]
for some locally compact Hausdorff space $X_i$.
\end{theorem}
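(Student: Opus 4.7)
The plan is to exploit the Fell algebra structure of $C^*(G_0(\Per))$ together with the topological properties of its spectrum. Since $X^u(\Per)/{\sim_0}$ is compact and locally Hausdorff, I would first cover it by finitely many open Hausdorff subsets $W_1, \ldots, W_N$, using local Hausdorffness at each point to produce an open Hausdorff neighborhood and compactness to extract a finite subcover. Each such $W_i$ is automatically locally compact Hausdorff: Hausdorff by construction, and locally compact because $W_i$ is locally homeomorphic via the quotient map $q$ to an open subset of the locally compact Hausdorff space $X^u(\Per)$. As an open subset of the compact space $X^u(\Per)/{\sim_0}$, each $W_i$ is also $\sigma$-compact and hence paracompact.

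Next, I would associate to each $W_i$ an ideal $I_i$ of $C^*(G_0(\Per))$, obtained by restricting the groupoid $G_0(\Per)$ to the saturated open set $q^{-1}(W_i) \subseteq X^u(\Per)$; the spectrum of $I_i$ is then the Hausdorff space $W_i$, and the Dixmier--Douady invariant of $I_i$ is trivial by restriction from the triviality for $C^*(G_0(\Per))$. Thus $I_i$ is a continuous-trace $C^*$-algebra with paracompact Hausdorff spectrum and trivial Dixmier--Douady class, so its associated $C^*$-algebra bundle over $W_i$ is globally trivializable. Since $C^*(G_0(\Per))$ is $C^*$-stable (Appendix \ref{Section-C*-Stability}) and $I_i$ is an ideal, every irreducible representation of $I_i$ is the restriction of an irreducible representation of $C^*(G_0(\Per))$, and therefore acts on a separable infinite-dimensional Hilbert space. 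This identifies the fiber as $\mathcal{K}(\mathcal{H})$ for separable infinite-dimensional $\mathcal{H}$, so $I_i \cong C_0(W_i) \otimes \mathcal{K}(\mathcal{H})$, and setting $X_i = W_i$ gives the desired form. Finally, since $\bigcup_i W_i = X^u(\Per)/{\sim_0}$, the sum $\sum_i I_i$ is dense in $C^*(G_0(\Per))$, so the ideals $I_i$ generate $C^*(G_0(\Per))$.

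The main obstacle I anticipate is the last step of the fiber identification: although the global algebra $C^*(G_0(\Per))$ is $C^*$-stable, one has to rule out the possibility that the trivialized bundle over some $W_i$ has a finite-dimensional matrix-algebra fiber. The argument sketched above, via pulling back irreducible representations of the ideal to the ambient stable algebra, should close this gap cleanly. Everything else is an assembly of standard facts about continuous-trace and Fell $C^*$-algebras together with the topological properties of the spectrum already established in Sections 4 and 5.
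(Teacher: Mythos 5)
Your proposal is correct and follows essentially the same route as the paper: cover the compact, locally Hausdorff spectrum $X^u(\Per)/{\sim_0}$ by finitely many Hausdorff open sets, pass to the corresponding $G_0(\Per)$-invariant open sets $q^{-1}(U_i)$ and the ideals $C^*(G_0(\Per)|_{q^{-1}(U_i)})$, and identify each as a stable continuous-trace algebra with trivial Dixmier--Douady invariant over a locally compact Hausdorff base. The only difference is cosmetic and occurs at the final step: the paper simply observes that an ideal of a $C^*$-stable algebra is $C^*$-stable (and then concludes via Morita equivalence with $C_0(U_i)$), whereas you rule out finite-dimensional fibres by extending irreducible representations of $I_i$ to the stable ambient algebra and then invoke the Dixmier--Douady trivialization; both close the same gap.
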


\begin{proof}
The space $X^u(\Per)/ \sim_0$ is compact and locally Hausdorff. As such, there exists a finite open cover $\{ U_i \}_{i=1}^N$ such that $U_i$ is Hausdorff in the subspace topology. Let $q: X^u(\Per) \rightarrow X^u(\Per)/{\sim_0}$ denote the quotient map. Using the definition of the quotient topology, $\{ q^{-1}(U_i) \}_{i=1}^N$ is an open cover of $X^u(\Per)$. Moreover, for each $i$, $q^{-1}(U_i)$ is a $G_0(\Per)$-invariant subset. 

General properties of \'etale groupoids imply that, for each $i$, $C^*(G_0(\Per)|_{q^{-1}(U_i)})$ is an ideal in $C^*(G_0(\Per))$. Using the fact that $U_i$ is Hausdorff for each $i$, we have that $C^*(G_0(\Per)|_{q^{-1}(U_i)})$ is a continuous-trace $C^*$-algebra with spectrum $U_i$ and trivial Dixmier-Douady invariant. Moreover, since they are ideals in a stable $C^*$-algebra, they are also stable.

\end{proof}

\begin{remark}
The open cover $\{ U_i \}_{i=1}^N$ of $X^u(\Per)/{\sim_0}$ in the previous theorem can be taken to have a special form: using the same method as in Corollary \ref{locCor}, one can show that each open set in the cover can be taken to be homeomorphic to an open subset of $X^u(\Per, \epsilon_X)$. 
\end{remark}

\begin{example} \label{coverAABABcase}
For the $aab/ab$-solenoid, the open cover discussed the previous theorem and remark can be taken to be the three open sets in Figure \ref{Figure-aab/ab-OpenNeighborhoods} along with two more open intervals, each one is a circle with the points $aa, ab, ba$ removed.
\end{example}

\section{Dynamic asymptotic dimension, nuclear dimension, and holomorphic functional calculus}

Here we will show that the \'{e}tale groupoid $G_0(\Per)$ has dynamic asymptotic dimension zero, as defined in \cite{GWY}. It follows that there is a bound on the nuclear dimension of $C^*(G_0(\Per))$ and that the dense subalgebra $C_c(G_0(\Per))$ is closed under holomorphic functional calculus in $C^*(G_0(\Per))$. Only the latter of these results will be used in the rest of the paper. However, it follows from the former and the inductive limit in Theorem \ref{inductiveLimit} that $C^*(G^s(\Per))$ has finite nuclear dimension. Although this is a special case of the main result of \cite{DS}, it illustrates how properties of $C^*(G_0(\Per))$ pass to $C^*(G^s(\Per))$. For more on nuclear dimension and its importance see \cite{WinterZacharias} and \cite{WinterStructure}. One can also see \cite{DS} for more on dynamic asymptotic dimension and nuclear dimension in the context of Smale spaces.

The groupoid $G_0(\Per)$ is an example of a groupoid of a local homeomorphism, as studied in \cite{CHR}.  The results of this section hold in this more general setting.  We temporarily abandon our Smale space to consider the following situation.  Let $X$ be a second countable, locally compact, Hausdorff topological space.  Let $Y$ be another topological space and let $q: X \to Y$ be a surjective local homeomorphism.  Then $q$ determines an equivalence relation $\sim_q$ on $X$ defined by $x \sim_q x'$ if and only if $q(x) = q(x')$.  Let
\[ R(q) = \{ (x, x') \in X \times X \mid q(x) = q(x')\} \]
be the corresponding groupoid, topologized as a subspace of $X \times X$.  As shown in \cite{CHR}, $R(q)$ is locally compact, Hausdorff, principal, and \'{e}tale.  Observe that our main groupoid $G_0(\Per)$ is $R(q)$ for $q: X^u(\Per) \to X^u(\Per)/{\sim_0}$.

\begin{lemma} ~
	\begin{enumerate}
		\item Let $K \subseteq X$ be compact.  There exists $N \in \N$ such that for all $x \in X$, the set $\{x' \in K \mid x \sim_q x'\}$ contains at most $N$ elements.
		\item Let $\widehat{K} \subseteq R(q)$ be compact.  There exists $M \in \N$ such that for all $x \in X$, the set $\{(x', x'') \in \widehat{K} \mid x \sim_q x' \sim_q x''\}$ contains at most $M$ elements.
	\end{enumerate}
\end{lemma}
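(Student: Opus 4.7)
The plan is to reduce everything to the defining property that $q$ is a local homeomorphism, namely that each point of $X$ has an open neighborhood on which $q$ is injective. All the work happens in part (1); part (2) will follow by a simple projection argument.

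For part (1), I would argue as follows. For each $x' \in K$, local homeomorphism gives an open neighborhood $U_{x'} \subseteq X$ of $x'$ on which $q|_{U_{x'}}$ is injective. The family $\{U_{x'}\}_{x' \in K}$ is an open cover of the compact set $K$, so we may extract a finite subcover $U_1, \ldots, U_N$. I claim this $N$ works. Indeed, fix any $x \in X$ and consider the fiber $F := \{x' \in K : x \sim_q x'\} = K \cap q^{-1}(q(x))$. For each $i$, the set $U_i \cap q^{-1}(q(x))$ contains at most one point because $q|_{U_i}$ is injective. Since $F \subseteq \bigcup_{i=1}^N (U_i \cap q^{-1}(q(x)))$, we conclude $|F| \le N$.

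For part (2), let $\pi_1, \pi_2 : X \times X \to X$ denote the two coordinate projections. Because $\widehat{K} \subseteq R(q) \subseteq X \times X$ is compact and the $\pi_i$ are continuous, the sets $K_1 := \pi_1(\widehat{K})$ and $K_2 := \pi_2(\widehat{K})$ are compact subsets of $X$. Apply part (1) to $K_1$ and $K_2$ to obtain constants $N_1$ and $N_2$ respectively. Now fix $x \in X$. Any pair $(x', x'') \in \widehat{K}$ with $x \sim_q x' \sim_q x''$ satisfies $x' \in K_1$ with $x \sim_q x'$ and $x'' \in K_2$ with $x \sim_q x''$, so there are at most $N_1$ choices for $x'$ and at most $N_2$ choices for $x''$. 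Hence $M := N_1 N_2$ is the desired bound.

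There is no real obstacle here: the argument is entirely formal once one has the local homeomorphism property of $q$ (which in our main application comes from Proposition~\ref{GzeroOpenSubGrpAndEtale} via \cite[Lemma 4.2]{CHR}) together with compactness. The only thing to double-check in the write-up is that nothing about $Y$'s (possibly non-Hausdorff) topology is used, since the bound is stated purely in terms of $X$, $K$, $\widehat{K}$, and the equivalence relation $\sim_q$.
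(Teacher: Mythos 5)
Your proposal is correct and matches the paper's own argument essentially verbatim: part (1) covers $K$ by finitely many open sets on which $q$ is injective and observes each meets any fiber in at most one point, and part (2) projects $\widehat{K}$ onto the two coordinates and multiplies the resulting bounds. No issues.
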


\begin{proof}
	Since $q$ is a local homeomorphism and $K$ is compact, we can cover $K$ with finitely many open sets, say $U_1, \ldots , U_N$, such that the restriction $q|_{U_i}$ is a homeomorphism onto its image.  Given $x \in X$, it follows that each $U_i$ contains at most one member of the equivalence class $[x]$, and this proves (1).
	
	For (2), Let $s,r: R(q) \to X$ denote the source and range maps for the groupoid, which are just the projections onto each coordinate.  Let $K_1 = s(\widehat{K})$ and $K_2 = r(\widehat{K})$, both of which are compact subsets of $X$.  Let $N_1$ and $N_2$ denote the constants obtained by applying (1) to $K_1$ and $K_2$ respectively, and let $M = N_1N_2$.  If $(x', x'') \in \widehat{K}$ and $x \sim_q x' \sim_q x''$, then there are at most $N_1$ possibilities for $x'$ and $N_2$ possibilities for $x''$.
\end{proof}

\begin{lemma}
	Let $\widehat{K} \subseteq R(q)$ be compact.  Then the subgroupoid generated by $\widehat{K}$ is compact.  
\end{lemma}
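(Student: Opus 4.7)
My plan is to show that the subgroupoid $H$ generated by $\widehat{K}$ is in fact equal to an $N$-fold product $\widetilde{K}^N$ for a suitable compact symmetric set $\widetilde{K}$, where $N$ is the constant produced in part (1) of the previous lemma. First I would set $K := s(\widehat{K}) \cup r(\widehat{K}) \subseteq X$, which is compact, and take
\[
\widetilde{K} := \widehat{K} \cup \widehat{K}^{-1} \cup \Delta_K,
\]
where $\Delta_K = \{(x,x) : x \in K\}$. This $\widetilde{K}$ is compact, since inversion is a homeomorphism of $R(q)$ and $\Delta_K$ is homeomorphic to $K$, and the subgroupoid $H$ is exactly $\bigcup_{n \ge 1} \widetilde{K}^n$.

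Next I would check by induction that each power $\widetilde{K}^n$ is compact. Given $\widetilde{K}^n$ compact, the set of composable pairs $\widetilde{K} \times_X \widetilde{K}^n$ is a closed subset of the compact product $\widetilde{K} \times \widetilde{K}^n$, closed because $X$ is Hausdorff; its image under the continuous groupoid multiplication is $\widetilde{K}^{n+1}$.

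The key point is then to show this chain of powers stabilizes by step $N$. Because $\Delta_K \subseteq \widetilde{K}$ and the range of any element of $\widetilde{K}^n$ lies in $K$, right-multiplication by the corresponding unit in $\Delta_K$ gives $\widetilde{K}^n \subseteq \widetilde{K}^{n+1}$. For fixed $x \in K$, the fibers $R_n(x) := \{ y \in X : (x,y) \in \widetilde{K}^n \}$ then form an increasing chain inside $[x] \cap K$, a set of cardinality at most $N$ by part (1) of the previous lemma. Each strict inclusion adds at least one new element, so this chain stabilizes by $n = N$, uniformly in $x \in K$. Therefore $\widetilde{K}^n = \widetilde{K}^N$ for every $n \ge N$, and $H = \widetilde{K}^N$ is compact.

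The main thing to be alert to is including $\Delta_K$ in $\widetilde{K}$; it is this step that makes the sequence of powers monotone and lets part (1) of the preceding lemma bound the stabilization index uniformly in the basepoint $x$. Without the diagonal, one would only know that the $H$-orbit of each point is finite, which is not enough to conclude stabilization of $\widetilde{K}^n$ in a uniform number of steps.
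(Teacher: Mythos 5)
Your proof is correct. The overall strategy is the same as the paper's: replace $\widehat{K}$ by a compact set closed under inverses and units, observe that the generated subgroupoid is the union of the powers, check each power is compact, and then show the union stabilizes after a uniformly bounded number of steps. The paper does the stabilization differently, though: it invokes part (2) of the preceding lemma (the constant $M$ bounding, for each $x$, the number of composable pairs $(x',x'')\in\widehat{K}$ in the class of $x$) and asserts $\bigcup_{n\ge 1}\widehat{K}^n=\bigcup_{n=1}^M\widehat{K}^n$, which amounts to a pigeonhole argument that any chain of more than $M$ arrows in $\widehat{K}$ must repeat an arrow and can therefore be shortened. You instead use part (1): after adjoining the diagonal $\Delta_K$, the powers $\widetilde{K}^n$ are nested, the fibers $R_n(x)$ form an increasing chain inside the set $[x]\cap K$ of cardinality at most $N$, and once $R_n(x)=R_{n+1}(x)$ the chain is constant forever because $R_{n+1}(x)$ is obtained from $R_n(x)$ by applying $\widetilde{K}$ once more. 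Your route has the merit of making the stabilization mechanism completely explicit and of yielding the cleaner conclusion that $H$ equals a single power $\widetilde{K}^N$ rather than a finite union of powers; the paper's version avoids the monotonicity bookkeeping at the cost of leaving the shortening argument implicit. One cosmetic slip: when you right-multiply $g\in\widetilde{K}^n$ by a unit to get $\widetilde{K}^n\subseteq\widetilde{K}^{n+1}$, the relevant unit is $(s(g),s(g))$, so you should cite the source rather than the range of $g$; since $s(\widetilde{K}^n)\subseteq s(\widetilde{K})\subseteq K$, this changes nothing.
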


\begin{proof}
	Without loss of generality, assume $\widehat{K}$ is closed under inverses and units.  Then the subgroupoid generated by $\widehat{K}$ is $\bigcup_{n=1}^\infty \widehat{K}^n$.  Note that $\widehat{K}^n$ is compact for each $n$.  However, if $M$ is as in the previous lemma, then we see that $\bigcup_{n=1}^\infty \widehat{K}^n = \bigcup_{n=1}^M \widehat{K}^n$ is compact.
\end{proof}

As defined in \cite{GWY}, a groupoid has dynamic asymptotic dimension zero if and only if it is the union of open, relatively compact subgroupoids.

\begin{proposition}
	The groupoid $R(q)$ has dynamic asymptotic dimension zero.  Consequently, the nuclear dimension of $C^*_r(R(q))$ is at most the topological covering dimension of $X$.
\end{proposition}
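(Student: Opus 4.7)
The plan is to show directly that $R(q)$ can be expressed as a union of open, relatively compact subgroupoids; this is the stated criterion for dynamic asymptotic dimension zero, and the nuclear dimension bound then follows from the main theorem of \cite{GWY}.

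First I would use that $R(q)$ is second countable, locally compact, and Hausdorff (established in \cite{CHR}) to exhaust it as $R(q) = \bigcup_{n=1}^\infty U_n$ where each $U_n$ is open with compact closure $\overline{U_n}$. This is a standard $\sigma$-compactness argument, with no reliance on the groupoid structure beyond the point-set topology.

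Next, for each $n$ I would let $H_n$ denote the subgroupoid of $R(q)$ generated by $U_n$. Two things must be checked about $H_n$: that it is relatively compact, and that it is open. The first is immediate from the preceding lemma, since the subgroupoid generated by the compact set $\overline{U_n}$ is compact and contains $H_n$. For the second, the étale property of $R(q)$ makes the multiplication map $R(q)^{(2)} \to R(q)$ a local homeomorphism, hence open. Consequently, products of open sets are open, inversion preserves openness, and source/range images of open sets are open subsets of the (open) unit space; iterating these operations starting from $U_n$ shows $H_n$ is open. Since $U_n \subseteq H_n$, we have $R(q) = \bigcup_n H_n$, exhibiting $R(q)$ as a union of open, relatively compact subgroupoids and establishing $\dim_{\mathrm{DAD}}(R(q)) = 0$.

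For the nuclear dimension statement, I would invoke the main bound of \cite{GWY}: for a principal, Hausdorff, étale groupoid $G$ of dynamic asymptotic dimension $d$, one has $\dim_{\mathrm{nuc}}(C^*_r(G)) \leq (d+1)(\dim G^{(0)} + 1) - 1$. Specializing to $d = 0$ and $G^{(0)} = X$ gives the advertised bound $\dim_{\mathrm{nuc}}(C^*_r(R(q))) \leq \dim X$. I don't anticipate any serious obstacle: the hard content is packaged in the preceding lemma, which bounds local multiplicities of $\sim_q$, and the rest is a routine exhaustion combined with standard étale-groupoid point-set topology. The only point I would double-check carefully is the exact form of the inequality in \cite{GWY}, to ensure that dynamic asymptotic dimension zero yields precisely $\dim X$ rather than a slightly larger constant.
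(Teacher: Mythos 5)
Your proposal is correct and follows essentially the same route as the paper: exhaust $R(q)$ by relatively compact open sets, pass to the subgroupoids they generate, verify these are open and relatively compact (the latter via the preceding lemma on the subgroupoid generated by a compact set), and invoke the nuclear dimension bound of \cite{GWY}. The only cosmetic differences are that you prove openness of the generated subgroupoid directly from the \'etale structure where the paper cites \cite[Lemma 5.2]{GWY}, and the paper additionally records that second countability of $X$ ensures dynamic asymptotic dimension agrees with its ``strong'' variant, which is the hypothesis of \cite[Theorem 8.6]{GWY}.
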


\begin{proof}
	Since $R(q)$ is locally compact, we can write $R(q) = \bigcup V_\alpha$ where each $V_\alpha$ is a relatively compact open subset.  Let $G_\alpha$ be the subgroupoid generated by $V_\alpha$.  Note $G_\alpha$ is open by \cite[Lemma 5.2]{GWY}.  Clearly, $R(q) = \bigcup G_\alpha$.  Further, $G_\alpha$ is relatively compact as it is contained within the subgroupoid generated by $\overline{V_\alpha}$, which is compact by the previous lemma.
	
	The assertion about nuclear dimension follows immediately from \cite[Theorem 8.6]{GWY}.  Note that we assumed $X$ to be second countable, which implies that strong dynamic asymptotic dimension coincides with dynamic asymptotic dimension.
\end{proof}

\begin{proposition} 
	The dense subalgebra $C_c(R(q)) \subseteq C^*_r(R(q))$ is closed under the holomorphic functional calculus.  Consequently, this inclusion induces an isomorphism $K_0(C_c(R(q))) \cong K_0(C^*_r(R(q)))$.
\end{proposition}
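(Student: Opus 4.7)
Given $f \in C_c(R(q))$, the plan is to show that the resolvent $(z-f)^{-1} - z^{-1}$ lies uniformly in $C_c$, supported in a fixed compact subgroupoid, throughout the resolvent set, so that the Cauchy integral for $h(f)$ lands in $C_c$. Set $\widehat{K} := \supp(f) \cup \supp(f)^{-1}$, compact and inverse-closed in $R(q)$, and let $H := \langle \widehat{K}\rangle$, which is compact by the second lemma above. By the previous proposition, $R(q)$ has dynamic asymptotic dimension zero, so $H$ sits inside some open relatively compact subgroupoid $G \subseteq R(q)$, and all polynomials in $f$ and $f^{\ast}$ lie in $C_c(G)$ with support in $H$. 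The open inclusion $G \hookrightarrow R(q)$ gives an isometric $\ast$-monomorphism $C^*_r(G) \hookrightarrow C^*_r(R(q))$ of C*-algebras, so spectra and holomorphic functional calculus are preserved, and it suffices to verify closure in $C^*_r(G)$.

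The main technical step, and the main obstacle, is to show that for every $z \in \C \setminus \sigma(f)$, the element $(z - f)^{-1} - z^{-1}\cdot 1$ of $\widetilde{C^*_r(G)}$ lies in $C_c(G)$ with support in the compact set $H$. Using the standard identity $(z-f)^{-1} = ((z-f)^{\ast}(z-f))^{-1}(z-f)^{\ast}$, this reduces to inverse-closedness for self-adjoint elements: one has $(z-f)^{\ast}(z-f) = |z|^2 \cdot 1 + a$, where $a = f^{\ast}f - \bar z f - z f^{\ast}$ is self-adjoint and supported in $H$. For self-adjoint $a \in C_c(G)$ and $\mu \in \C \setminus \sigma(a)$, the Neumann series $\sum_{k\ge1} \mu^{-k-1}a^k$ realizes $(\mu - a)^{-1} - \mu^{-1}$ as an element of $C_c(G)$ supported in $H$ whenever $|\mu| > \|a\|_{C^*_r(G)}$. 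The crucial point is that $\sigma(a) \subseteq \R$, so $\C \setminus \sigma(a)$ is \emph{connected}; combined with the standard étale-groupoid estimate $\|g\|_\infty \leq \|g\|_{C^*_r(G)}$ (which embeds $C^*_r(G)$ in $C_0(G)$ and makes evaluation at each $\gamma \in G$ a continuous linear functional), the identity theorem applied to the holomorphic map $\mu \mapsto ((\mu - a)^{-1} - \mu^{-1})(\gamma)$ forces vanishing throughout the resolvent set whenever $\gamma \notin H$. Unpacking the $(z-f)^{\ast}(z-f)$ reduction then yields $(z - f)^{-1} - z^{-1} \in C_c(G)$ supported in $H$ for every $z$ in the resolvent set of $f$.

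The Cauchy integral $h(f) = (2\pi i)^{-1}\oint h(z)(z-f)^{-1}\,dz$ therefore takes values in the closed subspace $\{g \in C^*_r(G) : g(\gamma) = 0 \text{ for all } \gamma \notin H\}$, which consists of continuous functions on $G$ supported in the compact set $H$ and so is contained in $C_c(G)$; hence $h(f) \in C_c(R(q))$. Finally, closure under holomorphic functional calculus for a dense $\ast$-subalgebra yields the isomorphism $K_0(C_c(R(q))) \cong K_0(C^*_r(R(q)))$ by the standard $K$-theoretic density argument.
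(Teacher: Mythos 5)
Your argument is correct, and it rests on the same two pillars as the paper's proof: the compactness of the subgroupoid $H$ generated by $\supp f$, and the norm estimate $\|\cdot\|_\infty \le \|\cdot\|_{C^*_r}$ coming from the embedding $C^*_r(R(q)) \hookrightarrow C_0(R(q))$, which makes the set of elements vanishing off $H$ a closed subspace contained in $C_c$. Where you diverge is in how you get the resolvents into that subspace: you verify it by hand, reducing to the self-adjoint case via $(z-f)^{-1} = ((z-f)^*(z-f))^{-1}(z-f)^*$, expanding a Neumann series for large $|\mu|$, and propagating the support condition to the whole resolvent set by the identity theorem (using that $\C\setminus\sigma(a)$ is connected for self-adjoint $a$). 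The paper short-circuits all of this: since every element of the $*$-algebra generated by $f$ is supported in $H$, the \emph{$C^*$-subalgebra} generated by $f$ lies in the closed subspace of elements supported in $H$, hence inside $C_c(R(q))$; and a $C^*$-subalgebra is automatically closed under holomorphic functional calculus (with the usual $h(0)=0$ convention for the nonunital setting), so $h(f)$ lands in $C_c(R(q))$ with no resolvent analysis at all. Your approach buys nothing extra here and costs several pages of care; in particular the detour through dynamic asymptotic dimension is superfluous --- you never need the ambient open relatively compact subgroupoid $G$, since the argument runs verbatim in $C^*_r(R(q))$ with $H$ alone. (If you do keep $G$, note that ``$H$ sits inside some open relatively compact subgroupoid'' is not literally immediate from the definition of dynamic asymptotic dimension zero: you must cover the compact set $H$ by finitely many of the relatively compact open sets and pass to the subgroupoid they generate, which is open by \cite[Lemma 5.2]{GWY} and relatively compact by the compactness lemma. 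Also, in the last step you should subtract $h(0)\cdot 1 = \frac{1}{2\pi i}\oint h(z)z^{-1}\,dz$ from the Cauchy integral so that the integrand is genuinely $(z-f)^{-1}-z^{-1}$; as written the displayed integral computes $h(f)$ in the unitization.)
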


In the statement above, $K_0(C_c(R(q)))$ denotes the algebraic $K_0$-group of $C_c(R(q))$.  Note that $C_c(R(q))$ is not a $C^*$-algebra, in general.  Also, recall that the operator $K_0$-group of a $C^*$-algebra is isomorphic to its algebraic $K_0$-group.

\begin{proof}
	Let $f \in C_c(R(q))$.  We will show that the $C^*$-subalgebra generated by $f$ is contained in $C_c(R(q))$.  Observe that
	\[ \supp(f^*) = (\supp f)^{-1},\qquad \supp(f*g) \subseteq (\supp f)(\supp g) \]
	for any other $g \in C_c(R(q))$.  Let $H$ be the subgroupoid generated by $\supp f$, which is compact by the previous lemma.  It follows that every function in the $*$-subalgebra generated by $f$ has support contained within $H$.
	
	Since $R(q)$ is \'{e}tale, the inclusion $C_c(R(q)) \to C_0(R(q))$ extends to a continuous inclusion $C^*_r(R(q)) \to C_0(R(q))$, see \cite[Proposition 4.2]{Ren}.  It follows that the set of functions in $C_c(R(q))$ whose support is contained within $H$ is closed in $C^*_r(R(q))$.  Consequently, the $C^*$-subalgebra generated by $f$ is contained within $C_c(R(q))$.  Since $C^*$-algebras are closed under holomorphic functional calculus, this proves that $C_c(R(q))$ is closed under holomorphic functional calculus.
\end{proof}

Returning to our groupoid $G_0(\Per)$, we have proved the following.

\begin{theorem} \label{Theorem-DADNucDimK0}Let $N$ denote the topological covering dimension of $X^u(\Per)$.
	\begin{enumerate}
		\item The groupoid $G_0(\Per)$ has dynamic asymptotic dimension zero.
		\item The nuclear dimension of $C^*(G_0(\Per))$ is at most $N$.
		\item The nuclear dimension of $C^*(G^s(\Per))$ is at most $N$.
		\item Inclusion induces an isomorphism $K_0(C_c(G_0(\Per))) \cong K_0(C^*(G_0(\Per)))$.
	\end{enumerate}
\end{theorem}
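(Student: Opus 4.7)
The plan is to apply the general results just proved about $R(q)$ to the specific situation where $X = X^u(\Per)$, $Y = X^u(\Per)/{\sim_0}$, and $q: X^u(\Per) \to X^u(\Per)/{\sim_0}$ is the quotient map. Recall from Proposition~\ref{GzeroOpenSubGrpAndEtale} and the Fell algebra discussion in Section~5 that $X^u(\Per)$ with its natural topology is second countable, locally compact and Hausdorff, that $q$ is a surjective local homeomorphism, and that $G_0(\Per) = R(q)$ with its topology as a subspace of $X^u(\Per) \times X^u(\Per)$. With this identification, parts (1), (2), and (4) are immediate consequences of the corresponding statements for $R(q)$ proved above.

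For part (1), apply the proposition giving dynamic asymptotic dimension zero to $R(q) = G_0(\Per)$. For part (2), the same proposition yields that the nuclear dimension of $C^*_r(G_0(\Per))$ is bounded by the topological covering dimension $N$ of $X^u(\Per)$; since $G_0(\Per)$ is amenable (being an open subgroupoid of the amenable groupoid $G^s(\Per)$), the reduced and full $C^*$-algebras coincide, so this gives the bound on the nuclear dimension of $C^*(G_0(\Per))$. For part (4), apply the proposition about holomorphic functional calculus to conclude that $C_c(G_0(\Per)) \subseteq C^*(G_0(\Per))$ is closed under holomorphic functional calculus, which induces the stated $K_0$-isomorphism.

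The only nontrivial new step is part (3). For this I would invoke Theorem~\ref{inductiveLimit}, which gives
\[ C^*(G^s(\Per)) \cong \varinjlim_{k} C^*(G_k(\Per)) \cong \varinjlim (C^*(G_0(\Per)), \psi). \]
Each $C^*(G_k(\Per))$ is isomorphic to $C^*(G_0(\Per))$ (by Proposition~\ref{GkIsoGzero}), hence has nuclear dimension at most $N$ by part (2). Nuclear dimension is preserved under inductive limits with injective connecting maps (indeed it is lower semicontinuous, as established in the work of Winter--Zacharias); applying this to the inductive system gives $\dim_{\mathrm{nuc}}(C^*(G^s(\Per))) \le N$.

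The only potential obstacle is the formal verification that the connecting maps in the inductive system are such that the standard preservation result for nuclear dimension applies. Since $\iota: C^*(G_0(\Per)) \hookrightarrow C^*(G_1(\Per))$ comes from the open inclusion $G_0(\Per) \hookrightarrow G_1(\Per)$ and extension of functions by zero, it is a genuine inclusion of $C^*$-algebras, and the composition with the isomorphism $(\varphi^{-1} \times \varphi^{-1})^*$ is injective. Thus the system has injective connecting maps and the preservation result applies directly, completing the proof of (3).
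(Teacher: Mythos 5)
Your proposal is correct and follows essentially the same route as the paper: parts (1), (2), and (4) are obtained by specializing the general $R(q)$ results to $q: X^u(\Per) \to X^u(\Per)/{\sim_0}$, and part (3) follows from the inductive limit of Theorem~\ref{inductiveLimit} together with the behavior of nuclear dimension under inductive limits from Winter--Zacharias. (One minor remark: the Winter--Zacharias bound $\dim_{\mathrm{nuc}}(\varinjlim A_i) \le \liminf_i \dim_{\mathrm{nuc}}(A_i)$ holds for arbitrary inductive systems, so the injectivity of the connecting maps, while true, is not actually needed.)
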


Note that the third assertion follows from our inductive limit structure and the general behavior of nuclear dimension with respect to inductive limits \cite{WinterZacharias}.  Bounds on the nuclear dimension of $C^*$-algebras associated to general Smale spaces were first obtained in \cite{DS}. The bound on nuclear dimension obtained in \cite{DS} also used dynamic asymptotic dimension.

\section{Further remarks on the structure of the spaces}
The relationships between the spaces $X^u(\Per)$, $X^u(\Per)/{\sim_0}$ and $Y$ along with the maps between them are discussed further in this section. Recall there is a commutative diagram
\[ \begin{CD}
X^u(\Per) @>\varphi >> X^u(\Per) \\
@Vq VV @Vq VV \\
X^u(\Per)/{\sim_0} @>\tilde{g} >> X^u(\Per)/{\sim_0} \\
@Vr VV @Vr VV \\
Y @>g>> Y
\end{CD}\]
The map $q: X^u(\Per) \rightarrow X^u(\Per)/{\sim_0}$ is a local homeomorphism, but it is not a covering map in general.  We prove a weaker result, Theorem \ref{WeakerThanCoverMapResult}, which shows that $q$ has properties reminiscient of a covering map.

\begin{lemma} \label{HausNbhdInside}
If $y\in Y$ with $r^{-1}(y)=\{ r_1, \ldots , r_l \}$ and $\{ U_i \}_{i=1}^{l}$ is a collection of Hausdorff neighborhoods of the $r_i$'s, then there exists $\delta>0$,  such that for each $0< \delta' \le \delta$,
\[ r^{-1}(B(y, \delta')) \subseteq \bigcup_{i=1}^{l} U_i.
\]
\end{lemma}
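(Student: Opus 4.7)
The plan is to exploit the compactness of $X^u(\Per)/{\sim_0}$ (established earlier) together with the Hausdorffness of $Y$, even though the quotient space itself is not Hausdorff. The idea is to show that the ``bad set'' $C := X^u(\Per)/{\sim_0} \setminus \bigcup_{i=1}^l U_i$ has image in $Y$ that stays away from $y$, and hence misses a small metric ball around $y$.

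More precisely, I would proceed as follows. First, observe that $\bigcup_{i=1}^l U_i$ is open in $X^u(\Per)/{\sim_0}$, so $C$ is closed. Since $X^u(\Per)/{\sim_0}$ is compact, $C$ is compact. Because $r: X^u(\Per)/{\sim_0} \to Y$ is continuous (as $\pi_0 = r\circ q$ is continuous and $q$ is a quotient map), the image $r(C)$ is a compact subset of the Hausdorff metric space $Y$, and is therefore closed in $Y$.

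Next I would check that $y \notin r(C)$. Indeed, if some $z \in C$ satisfied $r(z) = y$, then $z \in r^{-1}(y) = \{r_1, \ldots, r_l\}$, so $z = r_i$ for some $i$; but $r_i \in U_i \subseteq \bigcup_j U_j$, contradicting $z \in C$. Hence $y$ lies in the open set $Y \setminus r(C)$, and since $Y$ is a metric space there exists $\delta > 0$ with $B(y, \delta) \subseteq Y \setminus r(C)$. For any $0 < \delta' \le \delta$, this gives $B(y,\delta') \cap r(C) = \emptyset$, equivalently $r^{-1}(B(y,\delta')) \cap C = \emptyset$, which is precisely the desired inclusion $r^{-1}(B(y,\delta')) \subseteq \bigcup_{i=1}^l U_i$.

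There is no real obstacle here beyond recognizing that the non-Hausdorffness of $X^u(\Per)/{\sim_0}$ is irrelevant: the only facts needed are that closed subsets of a compact space are compact and that continuous images of compact sets are compact, neither of which requires Hausdorffness of the domain. The Hausdorff property is only used at the target $Y$, where it ensures that $r(C)$ is closed and hence that its complement is a neighborhood of $y$.
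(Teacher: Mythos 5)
Your proof is correct, and it takes a genuinely different route from the paper's. The paper argues by contradiction with sequences: assuming no such $\delta$ exists, it produces points $w_s \in X^u(\Per)/{\sim_0}$ with $r(w_s) \to y$ but $w_s \notin \bigcup_i U_i$, lifts these via $q$ to a sequence lying in a fixed compact subset of $X^u(\Per)$ (using Lemma \ref{CompactSubsetAndOnto}), extracts a convergent subsequence with limit $\xn$ satisfying $\pi_0(\xn) = y$, and derives a contradiction because $q(\xn)$ must be one of the $r_i$, forcing $w_{s_k}$ eventually into $U_i$. You instead work entirely downstairs: you use the compactness of $X^u(\Per)/{\sim_0}$ (established in the paper before this lemma, so there is no circularity) to conclude that the closed set $C = X^u(\Per)/{\sim_0}\setminus\bigcup_i U_i$ is compact, hence $r(C)$ is compact and therefore closed in the Hausdorff space $Y$, and $y \notin r(C)$; a ball about $y$ avoiding $r(C)$ then does the job. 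Your observation that the non-Hausdorffness of the quotient is harmless --- closed subsets of compact spaces are compact and continuous images of compact sets are compact without any separation hypothesis on the domain --- is exactly right, and the continuity of $r$ does follow from the universal property of the quotient topology as you say. The two arguments ultimately rest on the same compactness input (the paper's proof that the quotient is compact itself goes through Lemmas \ref{CompactSubsetAndOnto} and \ref{KtwoExistLemma}), but yours is shorter and avoids the explicit lifting and subsequence extraction, while the paper's has the mild advantage of not invoking compactness of the quotient as a prior result. One cosmetic point: your first step needs $\bigcup_i U_i$ to be open, so you should read ``Hausdorff neighborhood'' as ``open Hausdorff neighborhood'' (as the paper does where it applies this lemma), or else first shrink each $U_i$ to an open subneighborhood of $r_i$ and apply your argument to those.
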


\begin{proof}
For each $i=1, \ldots l$, let $U_i$ be a Hausdorff neighborhood of $r_i$. Suppose that no $\delta>0$ exists. Then there is a sequence $(w_s)_{s\in\N}$ in $X^u(\Per)/{\sim_0}$ such that $(r(w_s))_{s\in \N}$ converges to $y$ but for all $s$, $w_s \notin  \bigcup_{i=1}^{l} U_i$. Using Lemma \ref{CompactSubsetAndOnto}, there exists a sequence $({\bf x_s})_{s\in \N}$ in $X^u(\Per)$ such that $q({\bf x_s})=w_s$ for each $s$ and $({\bf x_s})_{s\in \N}$ is contained in a compact subset of $X^u(P)$. 

Hence, there exists a convergent subsequence, $({\bf x_{s_k}})_{k\in \N}$. Let $\xn$ denote its limit and note that $\pi_0(\xn)=y$. It follows that $q(\xn) \in r^{-1}(\{y\})$ and that for $k$ large enough $q({\bf x_{s_k}})=w_{s_k}$ is an element of $\bigcup_{i=1}^{l} U_i$. This is a contradiction. 
\end{proof}

\begin{lemma} \label{localHomeoToNbhd}
For each $\xn \in X^u(\Per)$ there exists $\delta>0$ such that for each open set $U\subseteq X^u(\xn, \delta)$, we have that for each ${\bf x_i} \sim_0 \xn$ there exists maps
\[ h_i : U \rightarrow X^u({\bf x_i}, \epsilon_Y) 
\]
such that for each $i$,
\begin{enumerate}[label=(\arabic*)]
\item \label{lemma-part1} $h_i$ is a homeomorphism onto its image and maps $\xn$ to ${\bf x_i}$;
\item \label{lemma-part2} $q|_U = (q \circ h_i)|_U = q|_{h_i(U)}$;
\item \label{lemma-part3} $q|_U$ is a homeomorphism onto its image in $X^u(P)/{\sim_0}$;
\item \label{lemma-part4} $h_i(X^u(\xn, \delta)) \cap h_j(X^u(\xn, \delta)) =\emptyset$ whenever ${\bf x_i} \neq {\bf x_j}$.
\end{enumerate}
\end{lemma}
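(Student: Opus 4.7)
My plan is to define each $h_i$ by the bracket formula
\[ h_i(\wn) := \varphi^{-K_0}\bigl([\varphi^{K_0}(\wn), \varphi^{K_0}({\bf x_i})]\bigr), \]
exactly as in the construction of $h_\xn$ in the paragraphs preceding Lemma \ref{hxnLemma}, and then to find a single $\delta$ that works simultaneously for every ${\bf x_i} \in [\xn]_0$. Well-definedness on $X^u(\xn, \epsilon_Y)$ is automatic: since $\xn \sim_0 {\bf x_i}$ forces $\pi_0(\xn) = \pi_0({\bf x_i})$, the points $\varphi^{K_0}(\xn)$ and $\varphi^{K_0}({\bf x_i})$ agree in their first $K_0+1$ coordinates, so Lemma \ref{KzeroLemma} together with Theorem \ref{WielerTheorem} places $\varphi^{K_0}({\bf x_i})$ in $X^s(\varphi^{K_0}(\xn), \beta/4)$; combined with Lemma \ref{EpsilonYLemma}, the two bracket arguments lie within $\beta/2 = \epsilon_X$ of each other whenever $\wn \in X^u(\xn, \epsilon_Y)$.

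The key new ingredient is to control $d(h_i(\wn), {\bf x_i})$ uniformly in $i$, despite the fact that $[\xn]_0$ is infinite. Since $X$ is compact, the bracket map is uniformly continuous on its compact domain, so given $\eta := \epsilon_Y$ there exists $\delta_0 > 0$ such that $d(\alpha, \varphi^{K_0}(\xn)) < \delta_0$ implies $d([\alpha, \gamma], \gamma) < \eta$ uniformly over all admissible $\gamma$; here I use $[\varphi^{K_0}(\xn), \gamma] = \gamma$ whenever $\gamma \in X^s(\varphi^{K_0}(\xn), \beta/4)$. Iterating axiom C2 along the unstable leaf through $\varphi^{K_0}({\bf x_i})$ then yields $d(h_i(\wn), {\bf x_i}) \leq \lambda^{K_0} \eta < \epsilon_Y$. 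Continuity of $\varphi^{K_0}$ at $\xn$ supplies a uniform $\delta_1 > 0$ with $\wn \in X^u(\xn, \delta_1)$ implying $d(\varphi^{K_0}(\wn), \varphi^{K_0}(\xn)) < \delta_0$, whence $h_i(X^u(\xn, \delta_1)) \subseteq X^u({\bf x_i}, \epsilon_Y)$ for every $i$ at once. I would then take $\delta \leq \delta_1$ small enough that, in addition, $\pi_0|_{X^u(\xn, \delta)}$ is injective (by local injectivity of $\pi_0$) and $q|_{X^u(\xn, \delta)}$ is a homeomorphism onto its image (since $q$ is a local homeomorphism).

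Parts (1)--(3) are then routine. For (1), $h_i(\xn) = \varphi^{-K_0}(\varphi^{K_0}({\bf x_i})) = {\bf x_i}$ because $\varphi^{K_0}({\bf x_i})$ lies on the local stable leaf of $\varphi^{K_0}(\xn)$; the homeomorphism assertion is standard for bracket-based maps, with inverse obtained by swapping the roles of $\xn$ and ${\bf x_i}$ just as in the proof that $\sim_0$ is symmetric. Part (3) is immediate from the choice of $\delta$. For (2), Lemma \ref{hxnLemma} yields $\pi_0(\wn) = \pi_0(h_i(\wn))$ for $\wn \in U$; to promote this to $\wn \sim_0 h_i(\wn)$ (equivalently $q(\wn) = q(h_i(\wn))$), I would apply the bracket construction now centered at $\wn$: for small $\delta_\wn$, the restriction of $h_i$ to $X^u(\wn, \delta_\wn)$ maps into $X^u(h_i(\wn), \epsilon_Y)$ with matching $\pi_0$-image, verifying condition (2) of Definition \ref{tildeZeroDef}.

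The main obstacle is (4), and it is precisely why the uniform $\delta$ matters. Suppose $\vn \in h_i(X^u(\xn, \delta)) \cap h_j(X^u(\xn, \delta))$ and write $\vn = h_i(\wn_1) = h_j(\wn_2)$. Two applications of Lemma \ref{hxnLemma} give $\pi_0(\wn_1) = \pi_0(\vn) = \pi_0(\wn_2)$, and injectivity of $\pi_0|_{X^u(\xn, \delta)}$ forces $\wn_1 = \wn_2 =: \wn$. Unwrapping $h_i(\wn) = h_j(\wn)$ via the bijection $\varphi^{-K_0}$ produces the equality of brackets
\[ [\varphi^{K_0}(\wn), \varphi^{K_0}({\bf x_i})] = [\varphi^{K_0}(\wn), \varphi^{K_0}({\bf x_j})]. \]
Both $\varphi^{K_0}({\bf x_i})$ and $\varphi^{K_0}({\bf x_j})$ lie in $X^s(\varphi^{K_0}(\xn), \beta/4)$, so the injectivity of the local product chart at $\varphi^{K_0}(\xn)$ forces $\varphi^{K_0}({\bf x_i}) = \varphi^{K_0}({\bf x_j})$, hence ${\bf x_i} = {\bf x_j}$ since $\varphi^{K_0}$ is a homeomorphism. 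The contrapositive gives (4).
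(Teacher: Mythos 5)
Your uniform $\delta$ controls only the \emph{metric} size of $h_i(X^u(\xn,\delta))$ --- uniform continuity of the bracket plus the contraction C2 indeed give $h_i(X^u(\xn,\delta))\subseteq X^u({\bf x_i},\epsilon_Y)$ for all $i$ at once, which is a clean observation --- but it does not control the property that actually carries the content of the lemma, namely part (2). Your justification of (2) invokes Lemma \ref{hxnLemma}, but that lemma only yields $\pi_0(\wn)=\pi_0(h_i(\wn))$ for $\wn$ in the ball $X^u(\xn,\delta_{(\xn,{\bf x_i})})$, where $\delta_{(\xn,{\bf x_i})}$ is the constant supplied by Definition \ref{tildeZeroDef} for the particular pair $(\xn,{\bf x_i})$. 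Since $[\xn]_0$ is infinite (Proposition \ref{infiniteDiscreteSet}), nothing guarantees $\inf_i \delta_{(\xn,{\bf x_i})}>0$, and your $\delta$ (coming from the modulus of continuity of the bracket and of $\varphi^{K_0}$, plus injectivity of $\pi_0$ and $q$ near $\xn$) has no reason to lie below all of them. Note that the level-$K_0$ bracket only forces $\varphi^{K_0}(h_i(\wn))$ and $\varphi^{K_0}(\wn)$ to agree in coordinates $0,\dots,K-1$, i.e.\ $g^{K_0-K+1}(\pi_0(h_i(\wn)))=g^{K_0-K+1}(\pi_0(\wn))$; since $g$ is not injective this does not give $\pi_0(h_i(\wn))=\pi_0(\wn)$. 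Concretely, in the $aab/ab$-solenoid take $\xn$ in the interior of an $a$-interval very close to its right endpoint, with that interval followed by a $b$-interval, and take ${\bf x_i}\sim_0\xn$ at the same offset in an $a$-interval followed by an $a$-interval. Here $h_i$ is essentially a translation, and your metrically-determined $\delta$ can exceed the distance from $\xn$ to the end of its interval; then for $\wn$ just past that endpoint one has $\pi_0(\wn)$ on the $b$-circle but $\pi_0(h_i(\wn))$ on the $a$-circle, so (2) fails. The same unjustified appeal to Lemma \ref{hxnLemma} also underlies your argument for (4).

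This uniformity problem is precisely what the paper's proof is structured to solve: it first handles a \emph{finite} set $\mathcal{F}\subseteq[\xn]_0$ of representatives (one for each possible value of $x'_{K_0}$, finitely many since $g$ is finite-to-one), where a minimum of finitely many $\delta$'s suffices, and then reaches a general ${\bf x'}\sim_0\xn$ with $x'_{K_0}=\hat{x}_{K_0}$ by the level-$0$ bracket $\zn\mapsto[\zn,{\bf x'}]$. The level-$0$ bracket preserves $\pi_0$ automatically (by the description of local stable sets in Theorem \ref{WielerTheorem}) with a $\delta$ independent of ${\bf x'}$, which is why the composition works uniformly. To repair your argument you would need to supply some substitute for this two-stage factorization; the single-step level-$K_0$ bracket with a purely metric choice of $\delta$ does not suffice.
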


\begin{proof}
Since $q$ is a local homeomorphism, condition \ref{lemma-part3} can be obtained by taking $\delta$ small. 

To begin, we prove the result for a single ${\bf x'} \sim_0 \xn$. By the definition of $\sim_0$ and Lemma \ref{hxnLemma}, there exists $\delta>0$ (depending on both $\xn$ and ${\bf x'}$) and map $h : X^u(\xn, \delta) \rightarrow X^u({\bf x'}, \epsilon_Y)$ such that
\begin{enumerate}[label=(\roman*)]
\item $h$ is a homeomorphism onto its image and maps $\xn$ to ${\bf x'}$;
\item For each $\zn\in X^u(\xn, \delta))$, we have $\pi_0(\zn)=\pi_0(h(\zn))$.
\end{enumerate}
In fact, because the second condition in the definition of $\sim_0$ is an open condition, we have that $\zn \sim_0 h(\zn)$ so that $q(\zn)=q(h(\zn))$ for each $\zn\in X^u(\xn, \delta)$. 

An induction argument implies that we have that conditions \ref{lemma-part1}-\ref{lemma-part4} hold for any finite set $\{ {\bf x_1}, \ldots , {\bf x_l} \}$ where for each $i$, ${\bf x_i} \sim_0 \xn$.

Continuing, we consider another special case. Suppose ${\bf x'} \in X^u(P)$ and $\pi_{K_0}({\bf x'})=\pi_{K_0}(\xn)$. Note that in particular that it follows that ${\bf x'} \sim_0 \xn$ and $d({\bf x'}, \xn)< \epsilon_Y$ (see Lemmas \ref{KzeroLemma} and \ref{tildeZeroEquivIfKzeroEqual}). We will show that there is a $\delta>0$ (independent of the choice of ${\bf x'}$) and $h : X^u(\xn, \delta) \rightarrow X^u({\bf x'}, \epsilon_Y)$ such that
\begin{enumerate}[label=(\roman*)]
\item \label{h-part1} $h$ is a homeomorphism onto its image and maps $\xn$ to ${\bf x'}$;
\item \label{h-part2} For each $\zn \in X^u(\xn, \delta))$, we have $\pi_0(\zn)=\pi_0(h(\zn))$.
\end{enumerate}
For $h$, we take the map $X^u(\xn, \delta) \rightarrow X^u({\bf x'}, \epsilon_Y)$ via
\[
\zn \mapsto [\zn, {\bf x'}].
\]
The definition and properties of bracket for a Wieler solenoid (see \cite[Lemmas 3.1 and 3.3]{Wie}) imply that $h$ has properties \ref{h-part1} and \ref{h-part2}. Again, because the second condition in the definition of $\sim_0$ is an open condition, we have that $\zn \sim_0 h(\zn)$ so that $q(\zn)=q(h(\zn))$ for each $\zn\in X^u(\xn, \delta)$. This implies that conditions \ref{lemma-part1}-\ref{lemma-part3} in the statement of the lemma hold in this special case. Finally, condition \ref{lemma-part4} holds since \cite[Lemma 3.3]{Wie} implies that if $\pi_{K_0}(\zn)=\pi_{K_0}({\bf z'})$ for $\zn$, ${\bf z'}$ in $X^u(\xn, \epsilon_Y)$, then $\zn={\bf z'}$. 

Given $\xn$, we denote the associated constant by $\delta_{\xn, K_0}$ to emphasize its dependence on $\xn$ and $K_0$ and its independence from ${\bf x'}$. 

We now prove the general case. Fix $\xn \in X^u(\Per)$. Since the maps $g: Y \rightarrow Y$ and $r: X^u(P)/{\sim_0} \rightarrow Y$ are each finite-to-one, there exists a finite set $\mathcal{F} \subseteq [\xn]_0$ such that if ${\bf x'} \sim_0 \xn$, then there exists ${\bf \hat{x}} \in \mathcal{F}$ such that $\pi_{K_0}({\bf \hat{x}}) = \pi_{K_0}({\bf x'})$. 

Let $\delta_1>0$ be the constant obtained by applying the special case of a finite subset to the set $\mathcal{F}$. For each ${\bf \hat{x}} \in \mathcal{F}$ we have $\delta_{{\bf \hat{x}}, K_0}>0$. Take $\delta>0$ such that
\begin{enumerate}[label=(\alph*)]
\item \label{delta-part1} $\delta< \delta_1$;
\item \label{delta-part2} $\delta< \min_{{\bf \hat{x}} \in \mathcal{F}} \{ \delta_{{\bf \hat{x}}, K_0} \}$;
\item \label{delta-part3} For each ${\bf \hat{x}} \in \mathcal{F}$, the map $h : X^u(x, \delta) \rightarrow X^u({\bf \hat{x}}, \epsilon_X)$ has image contained in $X^u(\xn,  \delta_{{\bf \hat{x}}, K_0})$.
\end{enumerate}
Suppose $U \subseteq X^u(\xn, \delta)$ is an open set and ${\bf \tilde{x}} \sim_0 \xn$. Then by construction there exists ${\bf \hat{x}} \in \mathcal{F}$ such that $\tilde{x}_{K_0}=\hat{x}_{K_0}$. Hence, there exists 
$h_{{\bf \hat{x}}} : X^u({\bf \hat{x}}, \delta_{{\bf \hat{x}}, K_0}) \rightarrow X^u({\bf \tilde{x}}, \epsilon_X)$ that satisfies the statement of the lemma. Moreover, since ${\bf \hat{x}} \in \mathcal{F}$ there exists $h: X^u(\xn, \delta_1) \rightarrow X^u({\bf \hat{x}}, \epsilon_X)$ that satisfies the statement of the lemma. 

We will show that $h_{{\bf \hat{x}}} \circ h : U \subseteq X^u(\xn, \delta) \rightarrow X^u({\bf \tilde{x}}, \epsilon_X)$ satisfies conditions \ref{lemma-part1}-\ref{lemma-part4} in the statement of the lemma. To see this note that $h_{{\bf\hat{x}}} \circ h$ is well-defined by the properties \ref{delta-part1}-\ref{delta-part3} (i.e., the defining properties of $\delta$) and it is the composition of two local homeomorphisms. This implies condition \ref{lemma-part1} holds. That conditions \ref{lemma-part2} and \ref{lemma-part3} hold follows from the two special cases discussed above. Finally, condition \ref{lemma-part4} can be obtained by possibly decreasing $\delta$ further (note that there exists a global constant $\hat{\epsilon}>0$ such that if $x_0=\tilde{x}_0$ and $\xn \in X^u({\bf \tilde{x}}, \hat{\epsilon})$, then $\xn={\bf \tilde{x}}$.)

\end{proof}

\begin{theorem} \label{WeakerThanCoverMapResult}
Given $y \in Y$ with $r^{-1}(\{y\})= \{ r_1, ... , r_l\}$ there is an open neighborhood $W$ of $y$ in Y, open neighborhoods $U_i$ of each $r_i$ in $X^u(\Per)/{\sim_0}$ and $\{ V_{\xn}\}_{\xn \in \pi_0^{-1}(\{y\})}$ is a collection of disjoint open sets in $X^u(\Per)$ such that 
\begin{enumerate}[label=(\arabic*)]
\item \label{theorem-part1} $r^{-1}(W) = \bigcup_{i=1}^{l} U_i$; 
\item \label{theorem-part2} $\xn \in V_{\xn}$ for each $\xn$;
\item \label{theorem-part3} $\pi_0^{-1}(W) = \bigcup_{\xn \in \pi_0^{-1}(\{y\})} V_{\xn}$;  
\item \label{theorem-part4} For each $\xn$, $q|_{V_{\xn}}$ is a homeomorphism onto its image and moreover this image is $U_i$ where $q(\xn)=r_i$. 
\end{enumerate}
\end{theorem}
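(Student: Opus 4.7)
The plan is to assemble the neighborhoods from the sheet structure of Lemma \ref{localHomeoToNbhd} and the base-shrinking provided by Lemma \ref{HausNbhdInside}. First, for each $i = 1, \ldots, l$ pick a representative $\xn_i \in \pi_0^{-1}(y)$ with $q(\xn_i) = r_i$, so that $\pi_0^{-1}(y)$ decomposes as the disjoint union of the classes $[\xn_i]_0$. Applying Lemma \ref{localHomeoToNbhd} at each $\xn_i$ with a common $\delta^* > 0$, for every $\xn \in [\xn_i]_0$ we obtain a sheet $V_{\xn}^{(0)} = h_{\xn}(\tilde{V}_{\xn_i})$ on which $q$ is a homeomorphism onto $\tilde{U}_i := q(\tilde{V}_{\xn_i})$ and $\pi_0$ is a homeomorphism onto $W_i^{(0)} := \pi_0(\tilde{V}_{\xn_i})$; shrink $\delta^*$ so that each $\tilde{U}_i$ is Hausdorff and $\tilde{U}_i \cap r^{-1}(y) = \{r_i\}$.

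Next, invoke Lemma \ref{HausNbhdInside} with $\tilde{U}_1, \ldots, \tilde{U}_l$ to obtain $\delta > 0$ with $r^{-1}(B(y,\delta)) \subseteq \bigcup_i \tilde{U}_i$. Let $W$ be an open neighborhood of $y$ contained in $B(y, \delta) \cap \bigcap_i W_i^{(0)}$ (to be shrunk once more below). Define $U_i := \tilde{U}_i \cap r^{-1}(W)$ and, for $\xn \in [\xn_i]_0$, $V_{\xn} := (q|_{V_{\xn}^{(0)}})^{-1}(U_i) = V_{\xn}^{(0)} \cap \pi_0^{-1}(W)$. Condition \ref{theorem-part1} follows since $r^{-1}(W) \subseteq \bigcup_i \tilde{U}_i$ by Lemma \ref{HausNbhdInside} and intersecting with $r^{-1}(W)$ gives the equality; \ref{theorem-part2} from $\pi_0(\xn) = y \in W$; and \ref{theorem-part4} from the sheet homeomorphism $q|_{V_{\xn}^{(0)}}$ restricted to the preimage of $U_i$.

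The main obstacle is the verification of pairwise disjointness of the $V_{\xn}$'s across different classes $[\xn_i]_0$ and $[\xn_j]_0$ with $i \neq j$ (Lemma \ref{localHomeoToNbhd}\ref{lemma-part4} gives disjointness only within one class) together with the covering identity \ref{theorem-part3}; both require a further shrinking of $W$ and will rely on the compactness of $X^u(\Per)/{\sim_0}$. For disjointness, I would argue that since both $\pi_0|_{V_{\xn}^{(0)}}$ and $\pi_0|_{V_{\xn'}^{(0)}}$ are homeomorphisms but lift $y$ to distinct points $\xn, \xn'$, the image $\pi_0(V_{\xn}^{(0)} \cap V_{\xn'}^{(0)})$ is an open set in $Y$ that does not contain $y$; a compactness argument on the quotient, combined with the hypothesis $\tilde{U}_i \cap r^{-1}(y) = \{r_i\}$, shows that $y$ is not in the closure of the (a priori infinite) union of these images, so $W$ may be chosen small enough to be disjoint from this closure. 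For \ref{theorem-part3}, the inclusion $\supseteq$ is automatic from the definitions; for $\subseteq$, if a sequence $\zn_n \in \pi_0^{-1}(B(y, 1/n))$ escaped $\bigcup_{\xn \in \pi_0^{-1}(y)} V_{\xn}^{(0)}$, then by compactness of the quotient we could pass to a subsequence with $q(\zn_n) \to r_i$ for some $i$, and then compare $\zn_n$ against the unique lift of $q(\zn_n)$ in each sheet $V_{\xn}^{(0)}$, $\xn \in [\xn_i]_0$. The delicate point is that $\zn_n$ need not lie in any sheet of class $[\xn_i]_0$ (as the $aab/ab$ example illustrates, where $\zn_n = 1.3$, $q(\zn_n) \to ba$ but $1.3 \notin V_{\xn}^{(0)}$ for any $\xn \in [\mathbf{p}]_0$); one must exploit the finite decomposition $r^{-1}(y) = \{r_1, \ldots, r_l\}$ and the fact that $q(\zn_n)$ may simultaneously lie in some $\tilde{U}_j$, $j \neq i$, to locate $\zn_n$ in a sheet $V_{\xn}^{(0)}$ with $\xn \in [\xn_j]_0$ and thereby reach the desired contradiction.
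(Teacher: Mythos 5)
Your construction is the same as the paper's: base points $\mathbf{x_i}$ with $q(\mathbf{x_i})=r_i$, disjoint Hausdorff sheets from Lemma \ref{localHomeoToNbhd}, the shrinking $r^{-1}(B(y,\delta))\subseteq\bigcup_i\tilde U_i$ from Lemma \ref{HausNbhdInside}, then $W=B(y,\delta)$, $U_i=\tilde U_i\cap r^{-1}(W)$, and $V_{\xn}$ the corresponding sheet over $U_i$. Conditions \ref{theorem-part1}, \ref{theorem-part2}, \ref{theorem-part4} are handled correctly and as in the paper. The gap is condition \ref{theorem-part3}, which you single out as the main obstacle and leave with an explicitly unresolved ``delicate point.'' No sequence or compactness argument is needed, and the route you sketch is the wrong one: since $\pi_0=r\circ q$, condition \ref{theorem-part1} immediately gives
\[
\pi_0^{-1}(W)=q^{-1}\bigl(r^{-1}(W)\bigr)=q^{-1}\Bigl(\bigcup_{i=1}^l U_i\Bigr)=\bigcup_{i=1}^l q^{-1}(U_i),
\]
and $q^{-1}(U_i)=\bigcup_{\xn\in q^{-1}(r_i)}V_{\xn}$ is exactly what Lemma \ref{localHomeoToNbhd}\ref{lemma-part2} provides (every point $\sim_0$-equivalent to a point of the base sheet lies on one of the sheets $h_i$). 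The difficulty you describe --- that $\zn_n$ with $q(\zn_n)$ near $r_i$ need not lie in a sheet of class $[\xn_i]_0$ --- evaporates because $q(\zn_n)\in r^{-1}(W)$ forces $q(\zn_n)\in U_j$ for \emph{some} $j$, and then $\zn_n$ lies in a sheet of class $[\xn_j]_0$; you gesture at this but never close the loop, and your framing via subsequences in the non-Hausdorff quotient (where limits are not unique) would require the whole multi-limit apparatus of Proposition \ref{Proposition-LimitOfTraces} to make rigorous.

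The disjointness of the $V_{\xn}$ across different classes is likewise not a matter of shrinking $W$. Each sheet satisfies $V_{\xn}\subseteq X^u(\xn,\epsilon_Y)$, and a local unstable set of this radius contains at most one point with a prescribed zeroth coordinate (this is \cite[Lemma 3.3]{Wie}, the same fact the paper uses for condition \ref{lemma-part4} of Lemma \ref{localHomeoToNbhd}); since all points of $\pi_0^{-1}(\{y\})$ share the zeroth coordinate $y$, two sheets based at distinct points of the fiber cannot intersect at all. Your proposed closure argument also starts from a false premise: $\pi_0$ is a local embedding but not an open map (in the $aab/ab$ example the $\pi_0$-image of an interval around $\mathbf{p}$ is not open in $S^1\vee S^1$), so $\pi_0(V_{\xn}^{(0)}\cap V_{\xn'}^{(0)})$ need not be open in $Y$, and controlling the closure of an infinite union of such sets near $y$ requires a uniformity that ``compactness of the quotient'' does not supply.
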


\begin{proof}
Fix $y\in Y$ with $r^{-1}(\{y\})= \{ r_1, ... , r_l\}$. Since $q$ is onto, for each $i=1, \ldots, l$, we can find ${\bf x_i} \in X^u(\Per)$ such that $q({\bf x_i})=r_i$. Since $q$ is a local homeomorphism and $X^u(\Per)/{\sim_0}$ is locally Hausdorff, there exists a collection of disjoint open sets $\{ \tilde{V}_i \}_{i=1}^{l}$ such that
\begin{enumerate}[label=(\roman*)]
\item \label{Vi-part1} ${\bf x_i} \in \tilde{V}_i$ for each $i$;
\item \label{Vi-part2} $q|_{\tilde{V}_i}$ is a homeomorphism onto its image;
\item \label{Vi-part3} For each $i$, the open set $\tilde{U}_i := q|_{\tilde{V}_i}(\tilde{V}_i)$ is a Hausdorff neighborhood of $r_i$. 
\item \label{Vi-part4} The conclusion of Lemma \ref{localHomeoToNbhd} applies to each $\tilde{V}_i$. 
\end{enumerate}
Condition \ref{Vi-part3} implies that we can apply Lemma \ref{HausNbhdInside} to $\{ \tilde{U}_i \}_{i=1}^{l}$ and hence find $\delta>0$ such that 
\[ 
r^{-1}(B(y, \delta)) \subseteq \bigcup_{i=1}^l \tilde{U}_i.
\]
For each $i=0, \ldots, l$, let $U_i := r^{-1}(B(y, \delta)) \cap \tilde{U}_i$. For each $\xn \in \pi_0^{-1}(\{y\}))$ there exists a unique $i$ such that $q(\xn)=q({\bf x_i})=r_i$ and by \ref{Vi-part4} there exists homeomorphism onto its image, $h|_{\tilde{V}_i}: \tilde{V}_i \rightarrow X^u(\xn, \epsilon_Y)$. We let $V_{\xn} := h|_{\tilde{V}_i}((q|_{\tilde{U}_i})^{-1} (U_i))$ where we have used \ref{Vi-part3} to ensure that $(q|_{\tilde{U}_i})^{-1}$ is well-defined.

We must show that the set $W:= B(y, \delta)$ along with collections $\{ U_i \}_{i=1}^l$ and $\{ V_{\xn} \}_{\xn \in \pi_0^{-1}(\{y\}))}$ satisfy \ref{theorem-part1}-\ref{theorem-part4}. Condition \ref{theorem-part1} holds since $r^{-1}(B(y, \delta)) \subseteq \bigcup_{i=1}^l \tilde{U}_i$ and the way we defined $U_i$. Condition \ref{theorem-part2} holds since given $x$ the local homeomorphism $h$ maps ${\bf x_i}$ to $\xn$. 

The proof of condition \ref{theorem-part3} is as follows:
\[
\pi_0^{-1}(W) = (r \circ q)^{-1}(W) = q^{-1} \left( \bigcup_{i=1}^l U_i \right) = \bigcup_{i=1}^l q^{-1}(U_i)
\]
For each $i$, $q^{-1}(U_i) = \bigcup_{\xn \in q^{-1}(r_i) } V_{\xn}$ by Lemma \ref{localHomeoToNbhd} (in particular, we are using condition \ref{lemma-part2} in the statement of that lemma). Condition \ref{theorem-part3} now follows by taking the union over $i=0, \ldots, l$.

Finally for condition \ref{theorem-part4}, fix $\xn \in X^u(\Per)$ such $\pi_0(\xn)=y$. It follows that $q(\xn)=q({\bf x_i})=r_i$ (for some fixed unique $i$). When considering Lemma \ref{localHomeoToNbhd} for points that are equal rather than just equivalent we can take $h$ to be the identity map. Hence, $V_{{\bf x_i}}=(q|_{\tilde{U}_i})^{-1} (U_i)$ from which it follows that $q|_{V_{{\bf x_i}}}$ is a homeomorphism onto its image, which is $U_i$. For $V_{\bf x} = h|_{\tilde{V}_i}((q|_{\tilde{U}_i})^{-1} (U_i))$, we have the result since $q \circ h|_{\tilde{V}_i} =q|_{\tilde{U_i}}$ by condition \ref{lemma-part2} in the statement of Lemma \ref{localHomeoToNbhd}.

\end{proof}

\section{Traces on $C_c(G_0(\Per))$} \label{Section-Traces}

In this section, we will consider a family of traces on $C_c(G_0(\Per))$ which are parametrized by the points of $X^u(\Per)/{\sim_0}$.  These traces do not extend to the $C^*$-algebra $C^*(G_0(\Per))$, but the induced maps on $K$-theory can be viewed as homomorphisms defined on $K_0(C^*(G_0(\Per)))$ by Theorem \ref{Theorem-DADNucDimK0}.  This family of traces can be used as a computational tool to understand the inductive limit structure of the $K_0$-group of $C^*(G^s(\Per))$.

To every $\xn \in X^u(\Per)$, we define a linear functional
\[ \tau_{\xn}: C_c(G_0(\Per)) \to \C,\qquad \tau_{\xn}(f) = \sum_{\yn \sim_0 \xn} f(\yn, \yn). \]
Observe that $\tau_{\xn}$ depends only on the equivalence class $[\xn]_0$.  Since $[\xn]_0$ is discrete, the sum defining $\tau_{\xn}(f)$ is actually a finite sum for any compactly supported $f$.  Note that the expression
\[ \tau_{\xn}(fg) = \sum_{\yn \sim_0 \xn}\sum_{\zn\sim_0\xn} f(\yn, \zn)g(\zn, \yn) \]
is symmetric in $f$ and $g$, showing that $\tau_{\xn}$ is a trace.  Additionally
\[ \tau_{\xn}(f^*f) = \sum_{\yn \sim_0 \xn}\sum_{\zn\sim_0\xn} |f(\zn, \yn)|^2 \geq 0, \]
which shows that $\tau_{\xn}$ is a positive trace and $\tau_{\xn}(f^*f) = 0$ if and only if $f$ vanishes on the equivalence class of ${\xn}$.

Let $\mathcal{F}(X^u(\Per)/{\sim_0})$ denote the vector space of (not necessarily continuous) complex-valued functions on $X^u(\Per)/{\sim_0}$.  Define
\[ \tau: C_c(G_0(\Per)) \to \mathcal{F}(X^u(\Per)/{\sim_0}),\qquad \tau(f)([\xn]_0) = \tau_{\xn}(f), \]
which is a trace that takes values in the vector space $\mathcal{F}(X^u(\Per)/{\sim_0})$.  Note that $\tau$ is positive in the sense that $\tau(f^*f)$ is a nonnegative function, and $\tau$ is faithful in the sense that $\tau(f^*f) = 0$ if and only if $f = 0$.  The function $\tau(f)$ need not be a continuous function on $X^u(\Per)/{\sim_0}$, but it retains some vestiges of continuity, as seen in Proposition \ref{Proposition-LimitOfTraces} below.

We will call a point $[\xn]_0 \in X^u(\Per)/{\sim_0}$ Hausdorff if every net in $X^u(\Per)/{\sim_0}$ that converges to $[\xn]_0$ has a unique limit.  It follows that $X^u(\Per)/{\sim_0}$ is Hausdorff if and only if every point in $X^u(\Per)/{\sim_0}$ is Hausdorff.

\begin{lemma}
	Any convergent net in $X^u(\Per)/{\sim_0}$ has at most finitely many limits.
\end{lemma}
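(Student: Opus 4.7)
The plan is to exploit the continuous finite-to-one map $r: X^u(\Per)/{\sim_0} \to Y$ together with the fact that $Y$ is Hausdorff. The key observation is that while $X^u(\Per)/{\sim_0}$ is only locally Hausdorff and may admit multiple limits for a convergent net, the non-Hausdorff behavior is controlled by the fibers of $r$, which are finite.

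First I would fix a convergent net $(w_\alpha)$ in $X^u(\Per)/{\sim_0}$ with some limit $[\xn]_0$, and suppose $[\yn]_0$ is any other limit. Since $r$ was shown to be continuous (in fact it is a local embedding, by the discussion following its definition), the net $(r(w_\alpha))$ converges in $Y$ to both $r([\xn]_0)$ and $r([\yn]_0)$. Because $Y$ is Hausdorff, limits in $Y$ are unique, which forces $r([\yn]_0) = r([\xn]_0)$.

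Consequently, every limit of $(w_\alpha)$ lies in the fiber $r^{-1}(\{r([\xn]_0)\})$. By Proposition \ref{Proposition-FinitelyManyEquivalenceClasses} this fiber is finite (it is precisely the set of $\sim_0$-equivalence classes $[\zn]_0$ with $\pi_0(\zn) = r([\xn]_0) = x_0$), so the set of limits of the net is finite. There is no real obstacle here; the finiteness was already packaged into Proposition \ref{Proposition-FinitelyManyEquivalenceClasses}, and all that remains is to funnel convergence through $r$.
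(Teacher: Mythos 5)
Your proof is correct and follows essentially the same route as the paper: push the net forward under the continuous map $r$ to $Y$, use Hausdorffness of $Y$ to conclude all limits lie in a single fiber of $r$, and invoke the finiteness of that fiber (Proposition \ref{Proposition-FinitelyManyEquivalenceClasses}, i.e.\ $r$ is finite-to-one). Nothing to add.
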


\begin{proof}
	All limits of a convergent net $([\xn_\lambda]_0)_{\lambda \in \Lambda}$ in $X^u(\Per)/{\sim_0}$ have the same image under $r: X^u(\Per)/{\sim_0} \to Y$ by continuity and the fact that $Y$ is Hausdorff.  The result follows because $r$ is finite-to-one.
\end{proof}

Suppose $([\xn_n]_0)_{n\in\N}$ is a sequence in $X^u(\Per)/{\sim_0}$ and $([\xn_{n_k}]_0)_{k\in\N}$ is a subsequence.  Then every limit of $([\xn_n]_0)_{n\in\N}$ is also a limit of $([\xn_{n_k}]_0)_{k\in\N}$.  However, it is possible that $([\xn_{n_k}]_0)_{k\in\N}$ has limits which are not limits of the original sequence $([\xn_n]_0)_{n\in\N}$.  We shall need to consider situations that avoid this pathology.

\begin{proposition} \label{Proposition-LimitOfTraces}
	Suppose $([\xn_n]_0)_{n\in\N}$ is a convergent sequence in $X^u(\Per)/{\sim_0}$. Let $L$ denote the set of all limits of $([\xn_n]_0)_{n\in\N}$, and assume that every subsequence of $([\xn_n]_0)_{n\in\N}$ has the same set of limits.  Then for any $f \in C_c(G_0(\Per))$,
	\[ \lim_{n \to \infty}\tau_{\xn_n}(f) = \sum_{[\wn]_0 \in L}\tau_{\wn}(f). \]
\end{proposition}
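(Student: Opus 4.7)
The plan is to reformulate $\tau_{\xn_n}(f)$ as a finite sum indexed, for large $n$, by the finite target set $E := \bigsqcup_{[\wn]_0 \in L} [\wn]_0 \cap \supp \phi$, where $\phi(\yn) := f(\yn, \yn)$, and then to identify each summand with a nearby term via the local-homeomorphism structure of $q : X^u(\Per) \to X^u(\Per)/{\sim_0}$.

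For the setup, since $G_0(\Per)$ is Hausdorff and \'{e}tale, its unit space is closed in $G_0(\Per)$ and homeomorphic to $X^u(\Per)$, so $\phi \in C_c(X^u(\Per))$ and $\tau_{\xn}(f) = \sum_{\yn \sim_0 \xn} \phi(\yn)$ is the finite sum noted before the proposition. Continuity of $r$ and Hausdorffness of $Y$ force all limits $[\wn]_0 \in L$ to share a common image $y \in Y$, so $\pi_0(\yn_0) = y$ for every $\yn_0 \in E$. Each $[\wn_j]_0 \cap \supp \phi$ is finite by the same remark, whence $E$ is finite and $\sum_{[\wn]_0 \in L} \tau_{\wn}(f) = \sum_{\yn_0 \in E} \phi(\yn_0)$. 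Using that $q$ is a local homeomorphism and $X^u(\Per)$ is Hausdorff, I would choose for each $\yn_0 \in E$ an open neighborhood $V_{\yn_0}$ of $\yn_0$ on which $q$ restricts to a homeomorphism onto an open set $U_{\yn_0}$, with the $V_{\yn_0}$ pairwise disjoint. Since $[\xn_n]_0 \to q(\yn_0)$, for $n$ large one has $[\xn_n]_0 \in U_{\yn_0}$ for every $\yn_0 \in E$, so one may define $\yn^*_{n,\yn_0} \in V_{\yn_0}$ to be the unique preimage of $[\xn_n]_0$ under $q|_{V_{\yn_0}}$. Continuity of $(q|_{V_{\yn_0}})^{-1}$ gives $\yn^*_{n,\yn_0} \to \yn_0$, and hence $\phi(\yn^*_{n,\yn_0}) \to \phi(\yn_0)$.

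The main step, and the one I expect to be the principal obstacle, is the reverse inclusion
\[ [\xn_n]_0 \cap \supp \phi \;\subseteq\; \{\yn^*_{n,\yn_0} : \yn_0 \in E\} \qquad \text{for all sufficiently large } n. \]
Suppose this fails along a subsequence $(n_k)$, with witnesses $\zn_{n_k} \in [\xn_{n_k}]_0 \cap \supp \phi$ lying outside every $V_{\yn_0}$. By sequential compactness of $\supp \phi$ in the metrizable space $X^u(\Per)$, pass to a further subsequence with $\zn_{n_k} \to \zn^* \in \supp \phi$. Continuity of $q$ gives $q(\zn_{n_k}) = [\xn_{n_k}]_0 \to q(\zn^*)$, so $q(\zn^*)$ is a limit of a subsequence of $([\xn_n]_0)$, which by hypothesis equals $L$; hence $\zn^* \in E$ and $\zn_{n_k} \in V_{\zn^*}$ for large $k$, contradicting the choice of $\zn_{n_k}$. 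Granted the inclusion, combined with $\{\yn^*_{n,\yn_0}\}_{\yn_0 \in E} \subseteq [\xn_n]_0$, we get $\tau_{\xn_n}(f) = \sum_{\yn_0 \in E} \phi(\yn^*_{n,\yn_0}) \to \sum_{\yn_0 \in E} \phi(\yn_0)$, as desired. The subsequential-invariance hypothesis enters precisely to force $q(\zn^*) \in L$; without it, the sequence could pick up persistent contributions from classes in $r^{-1}(y) \setminus L$ that appear only as subsequential limits.
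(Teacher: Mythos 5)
Your argument is correct, and while it shares the paper's overall strategy --- choose pairwise disjoint open sets on which $q$ is injective around the relevant points of the limit classes, lift $[\xn_n]_0$ through them, and conclude by continuity of $f$ --- the mechanism you use to rule out extraneous contributions for large $n$ is genuinely different. The paper's proof invokes Theorem \ref{WeakerThanCoverMapResult} to obtain a global fiber decomposition over a neighborhood $W$ of $y = \lim r([\xn_n]_0)$, and then uses the subsequence hypothesis together with the fact that $r$ is a local embedding to show that $[\xn_n]_0$ eventually avoids every sheet $U_i$ with $r_i \notin L$; this identifies the \emph{entire} equivalence class $[\xn_n]_0$ with a set of lifts. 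You never leave the compact set $\supp\bigl(f|_{X^u(\Per)}\bigr)$: sequential compactness there (valid since $X^u(\Per)$ is metrizable), together with the remark that equivalence classes meet compact sets in finite sets, lets you run the exclusion argument upstairs in $X^u(\Per)$, with the subsequence hypothesis entering exactly to force the limit $\zn^*$ of any persistent witnesses into a class belonging to $L$. Your route is more self-contained --- it needs neither Theorem \ref{WeakerThanCoverMapResult} nor the local-embedding property of $r$ --- and it controls only $[\xn_n]_0 \cap \supp\bigl(f|_{X^u(\Per)}\bigr)$, which is all the statement requires; the paper's route instead reuses structural machinery it has already built and exploits elsewhere. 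The one point you should make explicit is that $L$ is finite (the lemma immediately preceding the proposition), since you use it tacitly both when asserting that $E$ is finite and when shrinking the finitely many $V_{\yn_0}$ to be pairwise disjoint.
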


\begin{proof}
	Since $r$ is continuous and $Y$ is Hausdorff, the sequence $(r([\xn_n]_0))_{n \in \N}$ converges to a unique limit $y \in Y$.  Write $r^{-1}\{y\} = \{r_1, \ldots , r_l\}$ where $r_1, \ldots , r_m \in L$ and $r_{m+1}, \ldots , r_l \notin L$.  Let $W, \{U_i\}_{i=1}^l, \{V_{\xn}\}_{\xn \in \pi_0^{-1}\{y\}}$ be as in Theorem \ref{WeakerThanCoverMapResult}.  We claim that there is a positive integer $N$ such that:
	\begin{itemize}
		\item If $1 \leq i \leq m$, then $[\xn_n]_0 \in U_i$ for all $n \geq N$.
		\item If $m+1 \leq i \leq l$, then $[\xn_n]_0 \notin U_i$ for all $n \geq N$.
	\end{itemize}
	Since each $U_i$ is a neighborhood of $r_i$, the first condition can easily be arranged by the definition of limit.  Suppose for a contradiction that the second condition is impossible to arrange.  It follows that for some $i \in \{m+1, \ldots , l\}$, the set $U_i$ contains infinitely many points in the sequence $([\xn_n]_0)_{n \in \N}$.  So we can construct a subsequence $([\xn_{n_j}]_0)_{j \in \N}$ which is contained entirely in $U_i$.  But $r$ is a local embedding, so there is an open neighborhood $U_i' \subseteq U_i$ of $r_i$ such that $r|_{U_i'}: U_i' \to r(U_i')$ is a homeomorphism.  The subsequence $(r([\xn_{n_j}]_0))_{j \in \N}$ converges to $y$ in $r(U_i')$, which means that $([\xn_{n_j}]_0)_{j \in \N}$ converges to $r_i = (r|_{U_i'})^{-1}(y)$.  This contradicts the assumptions on $([\xn_n]_0)_{n \in \N}$.  
	
	Without loss of generality, suppose $[\xn_n]_0 \in U_i$ for all $n$ if $1 \leq i \leq m$, and $[\xn_n]_0 \notin U_i$ for all $n$ if $m+1 \leq i \leq l$.  Let $\xn \in \pi_0^{-1}\{y\}$ and suppose $q(\xn) = r_i$.  If $1 \leq i \leq m$, then $(q|_{V_{\xn}}^{-1}([\xn_n]_0))_{n \in \N}$ is a lift of the sequence $([\xn_n]_0)_{n \in \N}$ to $V_{\xn}$ which converges to $\xn \in V_{\xn}$.  On the other hand, if $m+1 \leq i \leq l$, then none of the points in $V_{\xn}$ is a lift under $q$ of a point of the sequence $([\xn_n]_0)_{n \in \N}$.
	Since $[\xn_n]_0 \in U_i$ for $1 \leq i \leq m$, but not for $m+1 \leq i \leq l$, it follows that the equivalence class $[\xn_n]_0$ coincides with the set
	\[ \{(q|_{V_{\xn}})^{-1}([\xn_n]_0) \in X^u(\Per) \mid q(\xn) = r_i \text{ for some } 1 \leq i \leq m\}. \]
	
	Now let $f \in C_c(G_0(\Per))$.  It follows from the above that
	\[ \tau_{\xn_n}(f) = \sum_{\zn \sim_0 \xn_n} f(\zn, \zn) = \sum_{i=1}^m \sum_{\xn \in q^{-1}\{r_i\}} f(q|_{V_{\xn}}^{-1}([\xn_n]_0),q|_{V_{\xn}}^{-1}([\xn_n]_0)) \]
	Since $f$ has compact support, there are only finitely many $\xn \in \pi_0^{-1}\{y\}$ for which $f(\zn, \zn) \neq 0$ for some $\zn \in V_{\xn}$.  Thus, the double sum above is actually a finite sum.  Since $f$ is continuous,
	\begin{align*}
	\lim_{n \to \infty}\tau_{\xn_n}(f) &= \sum_{i=1}^m \sum_{\xn \in q^{-1}\{r_i\}} \lim_{n \to \infty}f(q|_{V_{\xn}}^{-1}([\xn_n]_0),q|_{V_{\xn}}^{-1}([\xn_n]_0))\\
	&= \sum_{i=1}^m \sum_{\xn \in q^{-1}\{r_i\}} f(\xn,\xn)\\
	&= \sum_{i=1}^m \tau_{r_i}(f)
	\end{align*}
	as desired.
\end{proof}

\begin{corollary} \label{Corollary-HausdorffTraceContinuity}
	For any $f \in C_c(G_0(\Per))$, the function $\tau(f)$ is continuous when restricted to the subset of Hausdorff points of $X^u(\Per)/{\sim_0}$.
\end{corollary}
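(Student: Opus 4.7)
The plan is to deduce the corollary as a direct consequence of Proposition \ref{Proposition-LimitOfTraces}, by showing that the hypothesis about subsequential limits becomes automatic when the target point is Hausdorff. Since the space $X^u(\Per)/{\sim_0}$ is locally metrizable (hence first countable), and first countability passes to the subspace $H$ of Hausdorff points, continuity of $\tau(f)|_H$ can be checked sequentially. So my first move is to fix a Hausdorff point $[\xn]_0$ and a sequence $([\xn_n]_0)$ converging to $[\xn]_0$ (the sequence need not consist of Hausdorff points, though we only need the restriction), and aim to show $\tau_{\xn_n}(f) \to \tau_{\xn}(f)$.

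Next I would verify the subsequence hypothesis of Proposition \ref{Proposition-LimitOfTraces}: every subsequence $([\xn_{n_k}]_0)$ has the same limit set $L = \{[\xn]_0\}$. The subsequence certainly converges to $[\xn]_0$, since convergence passes to subsequences. Now by the defining property of a Hausdorff point, \emph{any} net (hence any sequence) that converges to $[\xn]_0$ has $[\xn]_0$ as its unique limit. Applied to the subsequence, this forces its set of limits to be exactly $\{[\xn]_0\}$, matching the set of limits of the original sequence. (By the earlier lemma all limit sets are finite, so there is no pathology in talking about them.)

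With the subsequence hypothesis verified, I invoke Proposition \ref{Proposition-LimitOfTraces} directly with $L = \{[\xn]_0\}$ to conclude
\[ \lim_{n \to \infty} \tau_{\xn_n}(f) \;=\; \sum_{[\wn]_0 \in L} \tau_{\wn}(f) \;=\; \tau_{\xn}(f) \;=\; \tau(f)([\xn]_0). \]
This gives sequential continuity of $\tau(f)$ at the arbitrary Hausdorff point $[\xn]_0$, and by first countability this upgrades to continuity of $\tau(f)|_H$. I do not anticipate any real obstacle here; the essential analytic content has already been absorbed into Proposition \ref{Proposition-LimitOfTraces}, and the corollary is a short packaging of the observation that Hausdorffness of the limit forces every subsequence to have the same unique limit. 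The only minor point worth stating carefully is the reduction to sequences via local metrizability, so that the net-theoretic definition of Hausdorff point can be applied to a single candidate limit.
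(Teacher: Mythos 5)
Your proof is correct and follows the same route as the paper: the paper's own proof is a one-line appeal to Proposition \ref{Proposition-LimitOfTraces} together with local metrizability, and you have simply filled in the details, in particular the observation that Hausdorffness of the limit point forces every subsequence to have the same singleton limit set, so the hypothesis of that proposition holds automatically. Nothing further is needed.
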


\begin{proof}
	Immediate from the previous theorem and the fact that $X^u(\Per)/{\sim_0}$ is locally metrizable (so that sequential continuity implies continuity).
\end{proof}

Next we consider the pullbacks of these traces under the connecting homomorphism $\psi$ in our stationary inductive limit.  Recall the map $\tilde{g}: X^u(\Per)/{\sim_0} \to X^u(\Per)/{\sim_0}$ which can be thought of as a lift of the original map $g: Y \to Y$.  The pullback of a trace $\tau_{\xn}$ is given in terms of the $\tilde{g}$-preimages of $[\xn]_0$.

\begin{proposition} \label{Proposition-PullbackOfTraces}
	For any $f \in C_c(G_0(\Per))$ and $\xn \in X^u(\Per)$,
	\[ \tau_{\xn}(\psi(f)) = \sum_{[\wn]_0 \in \tilde{g}^{-1}\{[\xn]_0\}} \tau_{\wn}(f). \]
\end{proposition}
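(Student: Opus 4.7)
The plan is to unpack the definitions of $\tau_{\xn}$, $\psi$, and $\tilde{g}$ and then perform a reindexing of the sum using the fact that $\varphi$ is a homeomorphism of $X^u(\Per)$. Specifically, I would begin by writing
\[ \tau_{\xn}(\psi(f)) = \sum_{\yn \sim_0 \xn} \psi(f)(\yn, \yn) = \sum_{\yn \sim_0 \xn} f(\varphi^{-1}(\yn), \varphi^{-1}(\yn)), \]
using the explicit formula $\psi(f)(\xn, \yn) = f(\varphi^{-1}(\xn), \varphi^{-1}(\yn))$ given just after Theorem \ref{inductiveLimit}, together with the observation that diagonal elements always lie in $G_0(\Per)$, so no additional care is needed about whether $\psi(f)$ is defined to be zero off $G_0(\Per)$.

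Next, I would substitute $\zn := \varphi^{-1}(\yn)$. Because $\varphi$ is a homeomorphism of $X^u(\Per)$, the substitution $\yn = \varphi(\zn)$ is a bijection. The condition $\yn \sim_0 \xn$ translates to $\varphi(\zn) \sim_0 \xn$, which by definition of $\tilde{g}$ is exactly the statement $\tilde{g}([\zn]_0) = [\xn]_0$. Thus
\[ \tau_{\xn}(\psi(f)) = \sum_{\zn \in X^u(\Per) :\, \tilde{g}([\zn]_0) = [\xn]_0} f(\zn, \zn). \]

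Finally, I would partition the index set according to $\sim_0$-equivalence classes. The set $\{\zn \in X^u(\Per) : \tilde{g}([\zn]_0) = [\xn]_0\}$ is precisely the disjoint union $\bigsqcup_{[\wn]_0 \in \tilde{g}^{-1}\{[\xn]_0\}} [\wn]_0$, so the double sum
\[ \sum_{[\wn]_0 \in \tilde{g}^{-1}\{[\xn]_0\}} \sum_{\zn \sim_0 \wn} f(\zn, \zn) = \sum_{[\wn]_0 \in \tilde{g}^{-1}\{[\xn]_0\}} \tau_{\wn}(f) \]
gives the desired identity. The only thing worth noting is convergence: since $f$ has compact support and each equivalence class $[\wn]_0$ is discrete (Proposition \ref{infiniteDiscreteSet}), each $\tau_{\wn}(f)$ is a finite sum, and by Proposition \ref{Proposition-FinitelyManyEquivalenceClasses} the outer index set $\tilde{g}^{-1}\{[\xn]_0\}$ is finite (it sits over $g^{-1}\{x_0\}$ under $r$, and both $g$ and $r$ are finite-to-one), so no convergence issues arise.

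There is no real obstacle; the proof is essentially a bookkeeping exercise once one recognizes that the substitution $\yn \mapsto \varphi^{-1}(\yn)$ converts the $\sim_0$-equivalence class $[\xn]_0$ into the full $\tilde{g}$-preimage of $[\xn]_0$, and that grouping by $\sim_0$-classes of $\zn$ recovers the right-hand side.
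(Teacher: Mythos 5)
Your proof is correct and follows essentially the same computation as the paper: expand $\tau_{\xn}(\psi(f))$ using the explicit formula for $\psi$, reindex via $\zn = \varphi^{-1}(\yn)$ to get a sum over $\varphi^{-1}[\xn]_0$, and partition that set into the $\sim_0$-classes lying in $\tilde{g}^{-1}\{[\xn]_0\}$. The extra remarks on finiteness and on the diagonal lying in $G_0(\Per)$ are harmless additions to what is the paper's argument verbatim.
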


\begin{proof}
	Using the explicit description of $\psi$ given after Theorem \ref{inductiveLimit}, we have
	\begin{align*}
	\tau_{\xn}(\psi(f)) &= \sum_{\yn \sim_0 \xn}\psi(f)(\yn, \yn)\\
	&= \sum_{\yn \sim_0 \xn} f(\varphi^{-1}(\yn), \varphi^{-1}(\yn))\\
	&= \sum_{\zn \in \varphi^{-1}[\xn]_0} f(\zn, \zn)\\
	&= \sum_{[\wn]_0 \in \tilde{g}^{-1}\{[\xn]_0\}} \sum_{\vn \sim_0 \wn} f(\vn, \vn)\\
	&= \sum_{[\wn]_0 \in \tilde{g}^{-1}\{[\xn]_0\}} \tau_{\wn}(f).
	\end{align*}
\end{proof}

\section{$K$-theory computations} \label{KthComSec}

Given $[\xn]_0 \in X^u(\Per)/{\sim_0}$, let
\[ I_{\xn} = \{ f \in C_c(G_0(\Per)) \mid f(\yn, \zn) = 0 \text{ whenever } \yn \sim_0 \xn \sim_0 \zn\}, \]
which is an ideal in $C_c(G_0(\Per))$.  Let $J_{\xn}$ be the closure of $I_{\xn}$ in $C^*(G_0(\Per))$.  Observe that the quotient $C^*(G_0(\Per))/J_{\xn}$ naturally identifies with the compact operators $\K(\ell^2([\xn]_0))$, and the image of the natural homomorphism
\[ C_c(G_0(\Per))/I_{\xn} \to C^*(G_0(\Per))/J_{\xn} \cong \K(\ell^2([\xn]_0)) \]
consists of the set $\K_{\fin}(\ell^2([\xn]_0))$ of all compact operators whose matrix (with respect to the natural basis for $\ell^2([\xn]_0)$) has finitely many nonzero entries.  Further, $\tau_{\xn}$ factors through the usual trace $\tr: \K_{\fin}(\ell^2([\xn]_0)) \to \C$:
\[ \xymatrix{
	C_c(G_0(\Per)) \ar[r] \ar[rd]_-{\tau_{\xn}} & \K_{\fin}(\ell^2([\xn]_0)) \ar[d]^-{\tr}\\
	& \C
}\]
Although $\tau_{\xn}$ is not defined on all of $C^*(G_0(\Per))$, we can view it as a homomorphism defined on $K_0(C^*(G_0(\Per)))$ by Theorem \ref{Theorem-DADNucDimK0}.

\begin{proposition}
	For any $[\xn]_0 \in X^u(\Per)/{\sim_0}$, $\tau_{\xn}(K_0(C^*(G_0(\Per)))) \subseteq \Z$.
\end{proposition}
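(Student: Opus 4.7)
The plan is to combine the commutative diagram displayed just before the statement with the algebraic/operator $K_0$ identification from Theorem \ref{Theorem-DADNucDimK0}(4). Since the inclusion $C_c(G_0(\Per)) \hookrightarrow C^*(G_0(\Per))$ induces an isomorphism $K_0(C_c(G_0(\Per))) \xrightarrow{\cong} K_0(C^*(G_0(\Per)))$, it is enough to show that the map $(\tau_{\xn})_*: K_0(C_c(G_0(\Per))) \to \C$ induced by the trace $\tau_{\xn}$ on the dense subalgebra lands in $\Z$. This is the only place where the holomorphic functional calculus closure is really needed; it legitimizes the $K$-theoretic interpretation of a densely defined trace.

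Next I would lift the factorization $\tau_{\xn} = \tr \circ \pi$ to matrix algebras. The algebra homomorphism $\pi : C_c(G_0(\Per)) \to \K_{\fin}(\ell^2([\xn]_0))$ extends in the obvious way to $\pi_n : M_n(C_c(G_0(\Per))) \to \K_{\fin}(\ell^2([\xn]_0) \otimes \C^n)$, and the standard matrix-trace extension of $\tau_{\xn}$ agrees with $\tr \circ \pi_n$; concretely, for $p = (p_{ij}) \in M_n(C_c(G_0(\Per)))$ the operator $\pi_n(p)$ has matrix entries $p_{ij}(\yn,\zn)$ with $\yn,\zn \sim_0 \xn$, and compact support of the $p_{ij}$ together with discreteness of $[\xn]_0$ (Proposition \ref{infiniteDiscreteSet}) ensures this matrix has only finitely many nonzero entries. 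Thus $\pi_n(p)$ is a finite-rank operator.

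Now, given a projection $p \in M_n(C_c(G_0(\Per)))$, the image $\pi_n(p)$ is a finite-rank projection on a Hilbert space; its operator trace equals its rank, which is a nonnegative integer, so $(\tau_{\xn})_*([p]) \in \Z_{\geq 0}$. For a general class in $K_0$, represented as $[p] - [s(p)]$ with $p$ a projection in the unitized matrix algebra $M_n(C_c(G_0(\Per))^{\sim})$ and $s$ the scalar map, one applies the same argument to $p - s(p) \in M_n(C_c(G_0(\Per)))$ (a self-adjoint difference of projections whose image under $\pi_n$ is still finite rank) while noting that $\tau_{\xn}$ on the scalar part is also an integer (rank of $s(p) \in M_n(\C)$ under the natural extension, or zero under the convention $\tau_{\xn}(1) = 0$). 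Either way, linearity yields integrality.

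The anticipated obstacle is not conceptual but bookkeeping: carefully handling the non-unital setup so that the extension of $\tau_{\xn}$ to the unitization is consistent with the operator-trace factorization through $\K_{\fin}$. Once this is set up correctly, the heart of the proof is essentially immediate — the factorization through finite-rank operators forces every $K_0$-pairing to be a rank, hence an integer.
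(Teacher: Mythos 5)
Your argument is correct and is essentially the paper's own proof, just written out in more detail: the paper also deduces integrality from the factorization of $\tau_{\xn}$ through the homomorphism onto $\K_{\fin}(\ell^2([\xn]_0))$ together with the fact that the operator trace is integer-valued on (finite-rank) projections, with Theorem \ref{Theorem-DADNucDimK0} justifying the passage to $K_0(C^*(G_0(\Per)))$. The matrix amplification and the unitization bookkeeping you carry out are exactly the routine details the paper leaves implicit, and your two suggested resolutions of the scalar-part issue (relative trace of two projections differing by a finite-rank operator, or cancelling the scalar contribution) are both standard and valid.
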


\begin{proof}
	Follows from the above discussion and the fact that $\tr: \K_{\fin}(\ell^2([\xn]_0)) \to \C$ is integer-valued on projections.
\end{proof}

If $X^u(\Per)/{\sim_0}$ is Hausdorff, then $K_0(C^*(G_0(\Per))) \cong K^0(X^u(\Per)/{\sim_0})$ by Corollary \ref{Corollary-HausdorffC*algebra}.  Here, $K_0$-classes are generated by vector bundles, and $\tau_{\xn}$ gives the dimension of the bundle at a point $[\xn]_0 \in X^u(\Per)/{\sim_0}$.  If, additionally, $X^u(\Per)/{\sim_0}$ is connected, then $[\xn]_0 \mapsto \tau_{\xn}(a)$ is continuous and integer-valued for any $a \in K_0(C^*(G_0(\Per)))$.  Hence it is constant, and by a slight abuse of notation we will denote the constant integer value by $\tau(a)$.

\begin{proposition} \label{Proposition-TraceHausdorffConnected}
	If $X^u(\Per)/{\sim_0}$ is Hausdorff and connected, then there is an integer $n \geq 2$ such that every $[\xn]_0 \in X^u(\Per)/{\sim_0}$ has exactly $n$ preimages under $\tilde{g}$, and for any $a \in K_0(C^*(G_0(\Per)))$,
	\[ \tau(\psi_*(a)) = n\cdot \tau(a). \]
\end{proposition}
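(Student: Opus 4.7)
The plan is to prove the proposition in three stages: show that $\tilde{g}$ is a covering map of constant degree $n$, rule out $n = 1$, and then combine with Proposition \ref{Proposition-PullbackOfTraces}. For the first stage, I would use that $\tilde{g}: X^u(\Per)/{\sim_0} \to X^u(\Per)/{\sim_0}$ is a continuous surjective finite-to-one local homeomorphism between compact Hausdorff spaces, which makes it a covering map by a standard argument: enumerating $\tilde{g}^{-1}\{[\xn]_0\} = \{z_1, \ldots, z_k\}$, choose disjoint open neighborhoods $V_i$ of each $z_i$ on which $\tilde{g}$ restricts to a homeomorphism; then $X^u(\Per)/{\sim_0} \setminus \bigsqcup_i V_i$ is compact and disjoint from $\tilde{g}^{-1}\{[\xn]_0\}$, so its (closed) image under $\tilde{g}$ misses $[\xn]_0$, and shrinking within $\bigcap_i \tilde{g}(V_i)$ exhibits an evenly covered neighborhood. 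Connectedness forces the fiber cardinality to be a constant integer $n$.

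The main obstacle is excluding $n = 1$, and I would do it by contradiction. If $\tilde{g}$ is a bijection, then for any $\xn, \yn \in X^u(\Per)$, $\varphi(\xn) \sim_0 \varphi(\yn)$ implies $\xn \sim_0 \yn$, so $G_1(\Per) \subseteq G_0(\Per)$; combined with the reverse inclusion from Proposition \ref{GkIsoGzero}, this gives $G_0(\Per) = G_1(\Per)$, and inductively $G_k(\Per) = G_0(\Per)$ for every $k$. Theorem \ref{inductiveLimit} then yields $G^s(\Per) = G_0(\Per)$, i.e., $\sim_s$ coincides with $\sim_0$. By Remark \ref{notHomeo}, $g$ is not a homeomorphism; since it is continuous and surjective on the compact Hausdorff space $Y$, it must fail to be injective. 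Pick $a \neq b$ in $Y$ with $g(a) = g(b)$. Using surjectivity of $\pi_0$, choose $\xn, \yn \in X^u(\Per)$ with $x_0 = a$ and $y_0 = b$. Lemma \ref{stableEquivIs} gives $\xn \sim_s \yn$, hence $\xn \sim_0 \yn$ by the previous step, hence $a = x_0 = y_0 = b$ by the first clause of Definition \ref{tildeZeroDef}, a contradiction.

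With $n \geq 2$ secured, the $K$-theoretic identity follows directly from Proposition \ref{Proposition-PullbackOfTraces}: applied at any $\xn$ and any $a \in K_0(C^*(G_0(\Per)))$, the right-hand side is a sum of exactly $n$ values $\tau_{\wn}(a)$, each equal to the constant $\tau(a)$ supplied by the discussion just before the proposition (connectedness plus Hausdorffness makes $[\wn]_0 \mapsto \tau_{\wn}(a)$ continuous and integer-valued, hence constant), giving $\tau(\psi_*(a)) = n\tau(a)$.
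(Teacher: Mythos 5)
Your proof is correct, but it reaches the key claim --- that the fiber cardinality of $\tilde{g}$ is a constant $n$ --- by a genuinely different route than the paper. The paper gets constancy for free from the trace machinery: under the identification $C^*(G_0(\Per)) \cong C(X^u(\Per)/{\sim_0}) \otimes \K(\H)$ of Corollary \ref{Corollary-HausdorffC*algebra}, it applies Proposition \ref{Proposition-PullbackOfTraces} to the rank-one projection $1 \otimes e$, so that $\tau_{\xn}(\psi(1\otimes e)) = \#\tilde{g}^{-1}\{[\xn]_0\}$; since the left-hand side is a continuous integer-valued function of $[\xn]_0$ on a connected space, it is constant. You instead prove directly that a finite-to-one surjective local homeomorphism of a compact Hausdorff space is a covering map; this is more work, but it is self-contained, purely topological, and yields the slightly stronger statement that $\tilde{g}$ is an $n$-fold covering rather than merely that its fibers have constant cardinality. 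For the exclusion of $n=1$, the paper asserts rather tersely that ``$\tilde{g}$ one-to-one implies $g$ one-to-one''; this implication is not immediate from the commutative diagram alone, since $r$ is only finite-to-one, and your argument --- injectivity of $\tilde{g}$ forces $G_0(\Per) = G_1(\Per) = \cdots = G^s(\Per)$, whence any pair $a \neq b$ with $g(a)=g(b)$ lifts, via surjectivity of $\pi_0$ and Lemma \ref{stableEquivIs}, to a $\sim_0$-equivalent pair with distinct zeroth coordinates, a contradiction --- is a clean and complete way to fill that gap. The final step, combining Proposition \ref{Proposition-PullbackOfTraces} with the constancy of $[\wn]_0 \mapsto \tau_{\wn}(a)$, is identical to the paper's.
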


\begin{proof}
	Consider the projection $1 \otimes e \in C(X^u(\Per)/{\sim_0}) \otimes \K(H) \cong C^*(G_0(\Per))$, where $e \in \K(H)$ is a rank one projection.  We know $\tau_{\xn}(1 \otimes e) = 1$ for every $\xn$.  By Proposition \ref{Proposition-PullbackOfTraces},
	\[ \tau_{\xn}(\psi(1\otimes e)) = \# \tilde{g}^{-1}\{[\xn]_0\}. \]
	But $\tau_{\xn}(\psi(1 \otimes e))$ is constant as a function of $\xn$, hence so is $\# \tilde{g}^{-1}\{[\xn]_0\} =: n$.  If $n = 1$, then $\tilde{g}$ is one-to-one.  This implies $g$ is one-to-one. Hence $g$ is a homeomorphism, which is not possible by Remark \ref{notHomeo}. Thus $n \geq 2.$
	
	For a general $a \in K_0(C^*(G_0(\Per)))$, we apply Proposition \ref{Proposition-PullbackOfTraces} again and use the fact that there are $n$ preimages to obtain
	\[ \tau(\psi_*(a)) = n\cdot \tau(a). \]
\end{proof}

\begin{theorem} \label{Theorem-TraceOnConnectedInductiveLimit} 
	Suppose $X^u(\Per)/{\sim_0}$ is Hausdorff and connected and $n$ is as in Proposition \ref{Proposition-TraceHausdorffConnected}.  Then there is an order-preserving surjective homomorphism
	\[ \tau: K_0(C^*(G^s(\Per))) \to \Z[1/n].\]
	In particular, $K_0(C^*(G^s(\Per)))$ is not finitely generated.  If $\tau: K_0(C^*(G_0(\Per))) \to \Z$ is an isomorphism, then $\tau: K_0(C^*(G^s(\Per))) \to \Z[1/n]$ is an isomorphism.
\end{theorem}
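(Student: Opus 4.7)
The plan is to push the trace $\tau$ through the inductive limit structure of Theorem \ref{inductiveLimit}. I would start by applying continuity of $K$-theory to get
\[ K_0(C^*(G^s(\Per))) \cong \varinjlim \left( K_0(C^*(G_0(\Per))), \psi_* \right). \]
Since $X^u(\Per)/{\sim_0}$ is assumed Hausdorff and connected, Corollary \ref{Corollary-HausdorffC*algebra} gives $C^*(G_0(\Per)) \cong C(X^u(\Per)/{\sim_0}) \otimes \K(\H)$, and the discussion preceding Proposition \ref{Proposition-TraceHausdorffConnected} shows that $\tau$ descends to a well-defined, order-preserving homomorphism $\tau: K_0(C^*(G_0(\Per))) \to \Z$ which sends the class $[1 \otimes e]$ of a rank one projection to $1$.

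Next I would invoke Proposition \ref{Proposition-TraceHausdorffConnected}, which says $\tau \circ \psi_* = n \cdot \tau$. Thus $\tau$ intertwines the inductive sequence $(K_0(C^*(G_0(\Per))), \psi_*)$ with the stationary sequence $\Z \xrightarrow{n} \Z \xrightarrow{n} \cdots$, whose inductive limit is $\Z[1/n]$ (the $k$-th copy of $\Z$ embedding as $m \mapsto m/n^k$). The universal property then yields the required homomorphism $\tau: K_0(C^*(G^s(\Per))) \to \Z[1/n]$. Surjectivity is immediate: $1/n^k$ is the image of $1 = \tau([1 \otimes e])$ from the $k$-th stage, and such fractions generate $\Z[1/n]$ as an abelian group.

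For order-preservation, $\tau$ is positive on projection classes at each finite stage (on a projection $p$ it evaluates to a sum of nonnegative diagonal entries), so positivity transfers to the inductive limit because the positive cone of $\varinjlim$ is the union of the images of the positive cones under the structure maps. Non-finite-generation is automatic from the surjection, since $\Z[1/n]$ is not finitely generated for $n \geq 2$. Finally, if $\tau: K_0(C^*(G_0(\Per))) \to \Z$ is already an isomorphism, then every vertical map in the intertwining is an isomorphism, and by exactness of $\varinjlim$ the induced map on inductive limits is an isomorphism as well. I do not anticipate a substantive obstacle here; the only delicate point is the routine verification that order structures are preserved under $\varinjlim$ of ordered abelian groups.
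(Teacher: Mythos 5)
Your proposal is correct and follows essentially the same route as the paper: intertwining the stationary inductive sequence $(K_0(C^*(G_0(\Per))), \psi_*)$ with $\Z \xrightarrow{n} \Z \xrightarrow{n} \cdots$ via the trace, using Proposition \ref{Proposition-TraceHausdorffConnected}, and passing surjectivity, order-preservation, and (when applicable) injectivity to the inductive limit. No substantive differences from the paper's argument.
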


\begin{proof}
	The homomorphism $\tau: K_0(C^*(G_0(\Per))) \to \Z$ induces a map of inductive sequences:
	\[ \begin{CD}
	K_0(C^*(G_0(\Per))) @>{\psi_*}>> K_0(C^*(G_0(\Per))) @>{\psi_*}>> K_0(C^*(G_0(\Per)))@>{\psi_*}>> \ldots\\
	@V{\tau}VV @V{\tau}VV @V{\tau}VV\\
	\Z @>n>> \Z @>n>> \Z @>n>> \ldots
	\end{CD}\]
	The vertical maps are surjective and order-preserving, therefore so is the induced map on the inductive limits $\tau: K_0(C^*(G^s(\Per))) \to \Z[1/n]$.
\end{proof}

The above result can be generalized to the case where $X^u(\Per)/{\sim_0}$ is Hausdorff, but not necessarily connected.  If $a \in K_0(C^*(G_0(\Per))) \cong K^0(X^u(\Per)/{\sim_0})$, then $\tau(a)$ is a continuous integer-valued function on $X^u(\Per)/{\sim_0}$, and we have a surjection
\[ \tau: K_0(C^*(G_0(\Per))) \to C(X^u(\Per)/{\sim_0}, ~\Z). \]
Using $\tilde{g}$, define a group homomorphism
\[ \tilde{g}_*: C(X^u(\Per)/{\sim_0}, ~\Z) \to C(X^u(\Per)/{\sim_0}, ~\Z), \qquad \tilde{g}_*(h)([\xn]_0) =  \sum_{[\wn]_0 \in \tilde{g}^{-1}\{[\xn]_0\}} h([\wn]_0), \]
which is well-defined because $\tilde{g}$ is finite-to-one.  Let $\mathcal{D}(X^u(\Per)/{\sim_0}, \tilde{g})$ denote the inductive limit of the sequence of groups
\[ \begin{CD}
C(X^u(\Per)/{\sim_0}, ~\Z) @>{\tilde{g}_*}>> C(X^u(\Per)/{\sim_0}, ~\Z) @>{\tilde{g}_*}>> C(X^u(\Per)/{\sim_0}, ~\Z) @>{\tilde{g}_*}>> \ldots
\end{CD} \]

\begin{theorem} \label{Theorem-TraceOnNonconnectedInductiveLimit}
	Suppose $X^u(\Per)/{\sim_0}$ is Hausdorff.  Then there is an order-preserving surjective homomorphism
	\[ \tau: K_0(C^*(G^s(\Per))) \to \mathcal{D}(X^u(\Per)/{\sim_0}, \tilde{g}).\]
	If $\tau: K_0(C^*(G_0(\Per))) \to C(X^u(\Per)/{\sim_0}, ~\Z)$ is an isomorphism, then it follows that $\tau: K_0(C^*(G^s(\Per))) \to \mathcal{D}(X^u(\Per)/{\sim_0}, \tilde{g})$ is an isomorphism.
\end{theorem}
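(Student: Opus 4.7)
The plan is to mimic the structure of the connected case (Theorem \ref{Theorem-TraceOnConnectedInductiveLimit}), but keep track of the full $\Z$-valued function rather than collapsing to a single integer. First I would verify that the map
\[ \tau: K_0(C^*(G_0(\Per))) \to C(X^u(\Per)/{\sim_0},~\Z), \qquad \tau(a)([\xn]_0) = \tau_{\xn}(a), \]
is a well-defined order-preserving surjective group homomorphism. For well-definedness, the Hausdorff assumption and Corollary \ref{Corollary-HausdorffC*algebra} give $C^*(G_0(\Per)) \cong C(X^u(\Per)/{\sim_0}) \otimes \K(\H)$, so $K_0(C^*(G_0(\Per))) \cong K^0(X^u(\Per)/{\sim_0})$; under this identification $\tau_{\xn}(a)$ is the rank of the $K^0$-class $a$ at $[\xn]_0$, which is a locally constant (hence continuous) integer-valued function. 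Order-preservation is automatic because ranks of vector bundles are nonnegative, and surjectivity follows because any continuous integer-valued function on a compact Hausdorff space is constant on each connected component and can be realized as the rank function of a direct sum of trivial bundles supported on the different components.

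Next I would establish the intertwining relation $\tau \circ \psi_* = \tilde{g}_* \circ \tau$. For $a \in K_0(C^*(G_0(\Per)))$ represented by a projection $p \in C_c(G_0(\Per))$ (using Theorem \ref{Theorem-DADNucDimK0}(4)), Proposition \ref{Proposition-PullbackOfTraces} gives
\[ \tau(\psi_*(a))([\xn]_0) = \tau_{\xn}(\psi(p)) = \sum_{[\wn]_0 \in \tilde{g}^{-1}\{[\xn]_0\}} \tau_{\wn}(p) = \tilde{g}_*(\tau(a))([\xn]_0), \]
where the sum is finite because $\tilde{g}$ is finite-to-one. This produces a commutative ladder of inductive sequences with $\psi_*$ along the top, $\tilde{g}_*$ along the bottom, and $\tau$ as each vertical arrow.

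Passing to inductive limits, since surjectivity and order-preservation are both preserved under direct limits of ordered abelian groups, I obtain the desired order-preserving surjection $\tau: K_0(C^*(G^s(\Per))) \to \mathcal{D}(X^u(\Per)/{\sim_0}, \tilde{g})$. If $\tau$ is an isomorphism at level zero, then every vertical map in the ladder is an isomorphism, and because the inductive limit functor preserves isomorphisms, the induced limit map is also an isomorphism.

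The only step requiring genuine care is the verification that $\tau$ intertwines $\psi_*$ and $\tilde{g}_*$ at the level of $K_0$; this rests on first lifting $K_0$-classes to $C_c(G_0(\Per))$ via Theorem \ref{Theorem-DADNucDimK0}(4) so that Proposition \ref{Proposition-PullbackOfTraces} is applicable, since $\psi$ itself is only defined on the $C^*$-algebra level. Once this identification is made, the remainder of the argument is formal and parallels the connected proof verbatim.
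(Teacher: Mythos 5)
Your argument is correct and follows the route the paper intends: the paper proves the connected case (Theorem \ref{Theorem-TraceOnConnectedInductiveLimit}) by exactly this ladder argument and treats the present theorem as its direct generalization, with the intertwining $\tau\circ\psi_*=\tilde{g}_*\circ\tau$ coming from Proposition \ref{Proposition-PullbackOfTraces} after lifting to $C_c(G_0(\Per))$ via Theorem \ref{Theorem-DADNucDimK0}, just as you describe. One small phrasing fix: for surjectivity onto $C(X^u(\Per)/{\sim_0},~\Z)$ you should use the finitely many clopen level sets of a given function (compactness forces finitely many values) and take formal differences of classes of projections of the form $1_{U_i}\otimes e$, rather than summing trivial bundles over connected components, of which there may be uncountably many (e.g.\ when the quotient is a Cantor set, as in the shift of finite type case).
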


Using Theorem \ref{inductiveLimit}, computing the $K$-theory of $C^*(G^s(\Per))$ reduces to two steps:
\begin{enumerate}
\item Computing the $K$-theory of $C^*(G_0(\Per))$;
\item Computing the map on $K$-theory induced by $\psi$.
\end{enumerate}
Furthermore, often (when $X$ is low dimensional for example) Theorem \ref{idealStructure} can be used to complete the first of these two steps. The second step is usually more difficult.  We will illustrate how our results facilitate these computations in three examples: the $n$-solenoid, subshifts of finite type and the $aab/ab$-solenoid. The first two are very well-known and easier to handle, as $g$ is a local homeomorphism.  The latter is also well-known, see \cite{Yi} (and also \cite{Gon2, GonRamSol, ThoSol, ThoAMS}), but the computation is much more interesting.

\begin{example}[$n$-solenoid] \label{KtheoryNsole}
Consider the $n$-solenoid which arises from $g: S^1\to S^1, g(z) = z^n$.  We take $\Per = \{(1,1,1,\ldots)\}$.  Here, $X^u(\Per)$ is homeomorphic to $\R$ in such a way that $\sim_0$ is congruence mod $\Z$ and $\varphi: X^u(\Per) \to X^u(\Per)$ is a dilation by a factor of $n$.

Since $g$ is a local homeomorphism, we have $C^*(G_0(\Per)) \cong C(S^1) \otimes \K(\H)$, and consequently $K_i(C^*(G_0(\Per)) \cong K^i(S^1) \cong \Z$ for $i=0,1$.  By Proposition \ref{Proposition-TraceHausdorffConnected}, the connecting homomorphism $\psi_*: K_0(C^*(G_0(\Per))) \to K_0(C^*(G_0(\Per)))$ is simply multiplication by $n$.  Thus the map $\tau: K_0(C^*(G^s(\Per))) \to \Z[1/n]$ of Theorem \ref{Theorem-TraceOnConnectedInductiveLimit} is an isomorphism.

The elements of $K_1(C^*(G_0(\Per)))$ can all be represented by $S^1$-valued functions on the unit space $X^u(\Per)$.  The isomorphism $K_1(C^*(G_0(\Per))) \cong \Z$ is given by the winding number.  Since $\varphi$ is an orientation-preserving dilation, we see that $\psi$ preserves winding numbers.  So $\psi_* = \id$ on $K_1(C^*(G_0(\Per))$, and consequently $K_1(C^*(G^s(\Per))) \cong \Z$.
\end{example}

\begin{example}[Subshifts of finite type] \label{KtheorySFT}
	Here $Y = \Xi$ is a one-sided shift space (a Cantor set), and $g: \Xi \to \Xi$ is the shift map, which is a local homeomorphism (see \cite{Wie}).  So $K_0(C^*(G_0(\Per))) \cong K^0(\Xi)$ and the trace $\tau: K_0(C^*(G_0(\Per))) \to C(\Xi, \Z)$ is an isomorphism.  It follows from Theorem \ref{Theorem-TraceOnNonconnectedInductiveLimit} that $K_0(C^*(G^s(\Per))) \cong \mathcal{D}(\Xi, g)$.  We leave it to the interested reader to compute $\mathcal{D}(\Xi, g)$ and reconcile it with the well-known computation of the dimension group associated to a shift of finite type, see for example \cite[Chapter 7]{LM} or \cite[Chapter 3]{PutHom}.  We clearly have $K_1(C^*(G^s(\Per))) = 0$ because $K^1(\Xi) = 0$.
\end{example}

Next we will compute the $K$-theory groups for the stable $C^*$-algebra associated to the $aab/ab$-solenoid.  The techniques used in this example can be generalized to any one-dimensional Williams solenoid. We note that the $K$-theory of such solenoids has also been computed in \cite{ThoSol, WilPhD, Yi}.

\begin{example}[$aab/ab$-solenoid] Consider the ideal $J$ obtained from the open set $U_a \cup U_b$ via Theorem \ref{idealStructure} where $U_a$ and $U_b$ are the two open sets in Example \ref{coverAABABcase}; each is obtained by taking a circle in $Y$ and removing $p$. This ideal is the completion of the space of functions that vanish on the equivalence classes of the three non-Hausdorff points in $X^u(\Per)/{\sim_0}$. Note that $J$ is the $C^*$-algebra of the groupoid $G_0(\Per)|_{U_a \cup U_b}$. We have a short exact sequence:
\[
0 \rightarrow J \rightarrow C^*(G_0(\Per)) \rightarrow C^*(G_0(\Per)) / J \rightarrow 0
\]
By Theorem \ref{WeakerThanCoverMapResult}, $G_0(\Per)|_{U_a \cup U_b}$ is the groupoid of an equivalence relation induced by a covering map with Hausdorff quotient $U_a \sqcup U_b$.  It follows that $J = C^*(G_0(\Per)|_{U_a \cup U_b}) \cong C_0(U_a \sqcup U_b) \otimes \K$. Each of $U_a, U_b$ is an open interval, so our short exact sequence becomes
\[
0 \rightarrow (C_0(\R)\otimes \mathcal{K}) \oplus (C_0(\R)\otimes \mathcal{K}) \rightarrow C^*(G_0(\Per)) \rightarrow \mathcal{K} \oplus \mathcal{K} \oplus \mathcal{K} \rightarrow 0 
\] 
Each $\K$ in the quotient corresponds to one of the non-Hausdorff points $ba, ab, aa$.  Applying $K$-theory and using
\[ K_0(C_0(\R)) \cong \{0 \}, \ K_1(C_0(\R)) \cong \Z, \ K_0(\mathcal{K}) \cong \Z, \ K_1(\mathcal{K})\cong \{0 \}
\]
we obtain the following six term exact sequence
\begin{center}
$\begin{CD}
0 @>>> K_0(C^*(G_0(\Per))) @>>> \Z \oplus \Z \oplus \Z \\
@AAA @. @V{\delta_0} VV \\
0 @<<<  K_1(C^*(G_0(\Per))) @<<< \Z \oplus \Z 
\end{CD}$
\end{center}
Hence
\[ 
K_0(C^*(G_0(\Per))) \cong {\rm ker}(\delta_0) \hbox{ and }K_1(C^*(G_0(\Per))) \cong {\rm coker}(\delta_0)
\]
and thus we need only to compute the boundary map $\delta_0$ (the exponential map). 

To do so, it is useful to label the copies of $\Z$ with the relevant generators. We will write $K_0$ of the quotient algebra as $\Z_{ba}\oplus \Z_{ab} \oplus \Z_{aa}$.  Each generator is a rank $1$ projection, and the injection $K_0(C^*(G_0|\Per))) \to \Z_{ba}\oplus \Z_{ab} \oplus \Z_{aa}$ from the diagram is given in terms of the traces of Section \ref{Section-Traces} by $\tau_{ba} \oplus \tau_{ab} \oplus \tau_{aa}$.  In particular, these three traces completely detect the $K_0$-group of $C^*(G_0(\Per))$.

We write $K_1(J)$ as $\Z_{U_a} \oplus \Z_{U_b}$.  All $K_1$ elements can be represented by $S^1$-valued functions on the unit space, and the two integers correspond to winding numbers associated to the two different intervals $a$ and $b$.

Now we compute $\delta_0$.  Begin with the generator of $\Z_{ba}$.  Let $\xn \in X^u(\Per)$ be an element with $[\xn]_0 = ba$.  We can take a positive continuous bump function $f$ on the unit space $X^u(\Per)$, supported over a very small interval, which is $1$ at $\xn$ to be a self-adjoint lift of the generator of $\Z_{ba}$.  Then $\exp(2\pi if)$ has winding number $1$ around $b$ and $-1$ around $a$.  That is, $\delta_0(1,0,0) = (-1,1)$.  Similarly, one shows $\delta_0(0,1,0) = (1,-1)$ and $\delta_0(0,0,1) = (0,0)$.  Hence
\[
\delta_0 = \begin{bmatrix} -1 & 1 & 0 \\ 1 & -1 & 0 \end{bmatrix}: \Z_{ba} \oplus \Z_{ab} \oplus \Z_{aa} \rightarrow \Z_{U_a} \oplus \Z_{U_b}.
\]
It follows that
\[
K_0(C^*(G_0(\Per))) \cong {\rm ker}(\Phi) \cong \Z \oplus \Z \hbox{ and }K_1(C^*(G_0(\Per))))\cong {\rm coker}(\Phi)\cong \Z.
\]
As generators of $K_0(C^*(G_0(\Per)))$, we take $\alpha = (1,1,0)$ and $\beta = (0,0,1)$.  Observe that $\tau_{ba} = \tau_{ab}$ on $K_0$ because they are equal on $\alpha$ and $\beta$.  Hence, $K_0$ is completely detected by just $\tau_{ba}$ and $\tau_{aa}$.  

Next, we must compute the map on $K$-theory obtained from the map $\psi$ in the inductive limit.  For $K_0$, we use the results of Section \ref{Section-Traces}.  Using Corollary \ref{Corollary-HausdorffTraceContinuity}, we see that the homomorphism $\tau_{\xn}: K_0(C^*(G_0(\Per))) \to \Z$ does not depend on the choice of $[\xn]_0 \in U_a$ by continuity and the fact that it is integer-valued.  We will refer to this homomorphism as simply $\tau_a$.  Similarly, we have $\tau_b$ for any point on $U_b$.

Next, consider a sequence of points $([\xn_n]_0)_{n \in \N} \in U_a$ on the left half of $U_a$ which converge towards the place where the three non-Hausdorff points are.  This sequence has two limits, namely $ba$ and $aa$.  It follows from Proposition \ref{Proposition-LimitOfTraces} that (on $K_0$)
\[ \tau_a = \lim_{n \to \infty} \tau_a = \lim_{n \to \infty} \tau_{\xn_n} = \tau_{ba} + \tau_{aa}. \]
Similarly, by considering a sequence on the right half of $U_a$ that comes in from the other side, we get
\[ \tau_a = \tau_{ab} + \tau_{aa}. \]
This tells us how to evaluate $\tau_a$, but it also gives a second proof that $\tau_{ba} = \tau_{ab}$ on $K_0$.  By similarly considering sequences on $U_b$, we obtain
\[ \tau_b = \tau_{ab} = \tau_{ba}. \]

Now we can compute $\psi_*$ on $K_0$ using Proposition \ref{Proposition-PullbackOfTraces} and the results of Example \ref{Example-aab/ab-InducedMapOnQuotient}.  The point $ba$ has three pre-images under $\tilde{g}$, so
\[ \tau_{ba} \circ \psi_* = \tau_{ba} + \tau_{ab} + \tau_{aa} = 2\tau_{ba} + \tau_{aa}. \]  The point $aa$ has one pre-image, which is on $U_a$, so
\[ \tau_{aa} \circ \psi_* = \tau_a = \tau_{ba} + \tau_{aa}. \]
(Although it is redundant, we can also compute $\tau_{ab} \circ \psi_* = \tau_a + \tau_b = 2\tau_{ba} + \tau_{aa}$.)  The traces $\tau_{ba}$ and $\tau_{aa}$ determine the entire $K_0$-group, so we can use this to determine
\[ \psi_*(\alpha) = 2\alpha + \beta,\qquad \psi_*(\beta) = \alpha + \beta. \]
So on $K_0(C^*(G_0(\Per))) \cong \Z \oplus \Z$, $\psi_*$ is given by $\begin{bmatrix}
2 & 1\\
1 & 1
\end{bmatrix}$.  Since $\psi_*$ is an automorphism, we conclude that $K_0(C^*(G^s(\Per))) \cong \Z \oplus \Z$.

The image of either generator of $\Z_{U_a} \oplus \Z_{U_b}$ generates $K_1(C^*(G_0(\Per))) \cong \Z$.  The integer value is again the winding number.  An argument similar to that of the $n$-solenoid shows that $\psi_*$ is the identity on $K_1(C^*(G_0(\Per)))$, so $K_1(C^*(G^s(\Per)))\cong \Z$.
\end{example}

Since the $K_0$-group for the $aab/ab$-solenoid is finitely generated, Theorem \ref{Theorem-TraceOnConnectedInductiveLimit} explains why we necessarily had a non-Hausdorff quotient $X^u(\Per)/{\sim_0}$.  Consequently, we see that this Smale space $X$ is not conjugate to any Wieler solenoid of the form $\varprojlim (Y, g)$ where $g: Y \to Y$ is a local homeomorphism and $Y$ is connected. Although it is likely that this result is known to experts, we feel that our method (which is $K$-theoretic) is a particularly nice way of showing that certain Smale spaces cannot be written in the form $\varprojlim (Y, g)$ where $g: Y \to Y$ is a local homeomorphism and $Y$ is connected.

\appendix

\section{$C^*$-stability results} \label{Section-C*-Stability}
The setup of this appendix is quite different from the rest of the paper. Here $(X, \varphi)$ is a mixing Smale space without any assumptions on the stable sets. We recall that a $C^*$-algebra $A$ is called $C^*$-stable if $A \cong A \otimes \mathcal{K}$, where $\mathcal{K}$ is the $C^*$-algebra of compact operators on a separable infinite-dimensional Hilbert space. This notion is usually referred to as ``stable", but we prefer ``$C^*$-stable" to avoid confusion with the dynamical term. 

The goal of this appendix is to prove that $C^*(G^s(\Per))$ and $C^*(G_0(\Per))$ are $C^*$-stable. Note that the latter algebra only exists in the special case when the stable sets are totally disconnected. The general plan is to use Theorem 2.1 and Proposition 2.2 of \cite{HjeRor}. We learned of this type of argument from the proof of \cite[Lemma 4.15]{ThoAMS}. 

\begin{lemma} \label{etaleGrpLemma}
	Suppose that $\mathcal{G}$ is an \'etale groupoid and for each $f\in C_c(\mathcal{G})$ there exists $v\in C_c(\mathcal{G})$ such that
	\[
	v^*v f= f \hbox{ and }fv=0.
	\]
	Then any $C^*$-completion of $C_c(\mathcal{G})$ is $C^*$-stable.
\end{lemma}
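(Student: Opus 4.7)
The plan is to invoke the characterization of $C^*$-stability due to Hjelmborg and R{\o}rdam \cite{HjeRor}. A convenient form, combining their Theorem~2.1 and Proposition~2.2, asserts that a $\sigma$-unital $C^*$-algebra $A$ is $C^*$-stable provided there is a dense subset $F\subseteq A_+$ such that for each $a\in F$ there exists $x\in A$ with $x^*x = a$ and $a(xx^*) = 0$. Let $A$ denote the given $C^*$-completion of $C_c(\mathcal{G})$; I would apply this criterion with the very concrete dense subset coming from $C_c(\mathcal{G})$ itself.

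First I would verify that $F := \{g^*g \mid g\in C_c(\mathcal{G})\}$ is dense in $A_+$. This is a routine approximation: any positive $a\in A$ has the form $b^*b$ with $b = a^{1/2}\in A$, and approximating $b$ in norm by some $g\in C_c(\mathcal{G})$ makes $g^*g$ approximate $a$. So it suffices to produce the required $x\in A$ only for elements $f=g^*g\in F$.

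For such an $f$, the hypothesis supplies $v\in C_c(\mathcal{G})$ with $v^*vf = f$ and $fv=0$. Since $f$ is self-adjoint, taking adjoints also gives $fv^*v=f$ and $v^*f=0$, so $v^*v$ is a two-sided unit on $f$ and $v^*$ annihilates $f$ on the left. The candidate I propose is $x := vf^{1/2}\in A$; the two required identities then read
\[ x^*x = f^{1/2}(v^*v)f^{1/2} = f^{1/2}\cdot f^{1/2} = f, \qquad f\cdot xx^* = f v f v^* = (fv)(fv^*) = 0. \]

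The step I expect to be the only real obstacle is the passage from the algebraic identities $v^*vf=f$ and $v^*f=0$ in $C_c(\mathcal{G})$ to the $C^*$-identities $v^*vf^{1/2} = f^{1/2}$ and $v^*f^{1/2}=0$ used in the display, which are what make $x=vf^{1/2}$ work. The standard resolution is continuous functional calculus inside $A$: from $v^*vf=f$ one gets $v^*v f^n = f^n$ for all $n\geq 1$ by induction, hence $v^*v\cdot p(f) = p(f)$ for every polynomial $p$ with $p(0)=0$, and since $t\mapsto t^{1/2}$ on $\sigma(f)\subseteq[0,\|f\|]$ is a uniform limit of such polynomials, $v^*v\cdot f^{1/2} = f^{1/2}$ follows; the identity $v^*f^{1/2}=0$ is entirely analogous. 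One final ingredient: for Hjelmborg-R{\o}rdam to apply, $A$ must be $\sigma$-unital, which in the intended applications is inherited from second countability of $\mathcal{G}$.
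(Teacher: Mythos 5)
Your proposal is correct and follows essentially the same route as the paper: both reduce to the Hjelmborg--R{\o}rdam criterion (Theorem 2.1 and Proposition 2.2 of \cite{HjeRor}) via the density of $\{g^*g \mid g \in C_c(\mathcal{G})\}$ in the positive cone, using the hypothesized $v$ to produce orthogonal equivalent positive elements. The only difference is that the paper applies the hypothesis to the approximant $g$ itself and takes $x = vg$, so that $x^*x = g^*v^*vg = g^*g$ and $(g^*g)(xx^*) = g^*(gv)gg^*v^* = 0$ hold purely algebraically inside $C_c(\mathcal{G})$, which lets one skip your functional-calculus step for $f^{1/2}$.
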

\begin{proof}
	Let $a$ be any positive element in $C^*(\mathcal{G})$ and $\epsilon>0$. There exists $f \in C_c(\mathcal{G})$ such that 
	\[
	\|a - f^*f \| < \epsilon
	\]
	and take $v \in C_c(\mathcal{G})$ as in the statement of the lemma. 
	
	In the context of condition (b) in Proposition 2.2 of \cite{HjeRor}, consider 
	\[
	b= f^*v^*v f = f^* f \hbox{ and }c= v f f^* v.
	\] 
	Hence, we have that $b \sim c$ (in the notation of \cite{HjeRor}) and 
	\[ 
	bc = f^* f v f f^* v = 0.
	\]
	Proposition 2.2 and Theorem 2.1 of \cite{HjeRor} can now be applied and give the result.
\end{proof}

\begin{theorem} \label{SandUstable}
	Suppose $(X, \varphi)$ is a mixing Smale space and $\Per$ is a finite $\varphi$-invariant set. Then $C^*(G^s(\Per))$ and $C^*(G_u(\Per))$ are $C^*$-stable.
\end{theorem}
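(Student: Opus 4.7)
The strategy is to verify the hypothesis of Lemma \ref{etaleGrpLemma} for the \'etale groupoid $G^s(\Per)$; the case of the unstable groupoid then follows by the symmetric argument applied to $(X, \varphi^{-1})$, which is again a mixing Smale space with the same finite invariant set of periodic points. Concretely, given $f \in C_c(G^s(\Per))$, one must produce $v \in C_c(G^s(\Per))$ satisfying $v^*vf = f$ and $fv = 0$.

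Write $F = \supp f \subseteq G^s(\Per)$ and let $K_r = r(F)$, $K_s = s(F)$, which are compact subsets of the unit space $X^u(\Per)$. Unpacking the convolution product for an \'etale groupoid, the two required identities translate into geometric conditions on an open set $V$ supporting $v$: the source projection must satisfy $s(V) \supseteq K_r$ with $|v|^2 \equiv 1$ on the preimage of $K_r$ in $V$ (which yields $v^*vf = f$), while $r(V) \cap K_s = \emptyset$ (which yields $fv = 0$). The crucial input from the mixing hypothesis is that for every $\xn \in X^u(\Per)$, the stable equivalence class $\{\yn \in X^u(\Per) \mid \yn \sim_s \xn\}$ is countably infinite; this follows because $X^s(\xn)$ is dense in $X$ and $X^u(\Per)$ is dense in $X$, and the intersection of a global stable set with a global unstable set is infinite in a mixing Smale space. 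In particular, for every $\xn \in K_r$ there exists a partner $\yn_{\xn} \in X^u(\Per)$ with $\yn_{\xn} \sim_s \xn$ and $\yn_{\xn} \notin K_s$.

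Given such a pairing, I would use the neighborhood base of Theorem \ref{etaleTopThm} to produce, for each $\xn \in K_r$, an open bisection $V_{\xn} \subseteq G^s(\Per)$ containing $(\yn_{\xn}, \xn)$ such that $\xn$ lies in the interior of $s(V_{\xn})$ and, after shrinking, $\overline{r(V_{\xn})} \cap K_s = \emptyset$ (possible since $K_s$ is closed in $X^u(\Per)$ and $\yn_{\xn} \notin K_s$). Compactness of $K_r$ yields a finite subcover $\{s(V_{\xn_1}), \ldots, s(V_{\xn_n})\}$ of $K_r$, and a further shrinking (using normality of $X^u(\Per) \times X^u(\Per)$ around the compact sets $V_{\xn_i}$) makes the $V_{\xn_i}$ pairwise disjoint as subsets of $G^s(\Per)$. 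Choose a partition of unity $\{\psi_i\}_{i=1}^n$ subordinate to $\{s(V_{\xn_i})\}$ on an open neighborhood of $K_r$ with $\sum_i \psi_i \equiv 1$ on $K_r$, and set
\[
v_i(\beta) = \sqrt{\psi_i(s(\beta))} \text{ for } \beta \in V_{\xn_i}, \qquad v = \sum_{i=1}^n v_i.
\]
Since the $V_{\xn_i}$ are disjoint bisections, the cross terms $v_i^* v_j$ vanish for $i \neq j$, so $v^*v = \sum_i |v_i|^2 \circ s$ as a function on the unit space, which evaluates to $\sum_i \psi_i \equiv 1$ on $K_r$; hence $v^*vf = f$. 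Because $r(\supp v) \subseteq \bigcup_i r(V_{\xn_i})$ is disjoint from $K_s$, no composable pair $\alpha\beta$ with $\alpha \in F$, $\beta \in \supp v$ exists, so $fv = 0$.

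The main obstacle is the combinatorial bisection construction: producing the partners $\yn_{\xn}$ in a way that can be organized into finitely many pairwise disjoint open bisections whose sources still cover $K_r$, while keeping all of their range projections clear of the compact set $K_s$. Once this is in place, Lemma \ref{etaleGrpLemma} immediately yields $C^*$-stability of $C^*(G^s(\Per))$, and the analogous argument for the unstable groupoid completes the proof.
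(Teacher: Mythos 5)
Your overall strategy---verifying the hypothesis of Lemma \ref{etaleGrpLemma} by assembling a compactly supported $v$ from bisections whose sources cover $K_r$ and whose ranges miss $K_s$, and reducing the unstable case to $(X,\varphi^{-1})$---is the same as the paper's. The gap is in the cancellation of the cross terms. Disjointness of the bisections $V_{\xn_i}$ as subsets of $G^s(\Per)$ does \emph{not} make $v_i^*v_j$ vanish for $i\neq j$: in the convolution $(v_i^*v_j)(\gamma)=\sum_{\gamma=\alpha\beta}\overline{v_i(\alpha^{-1})}\,v_j(\beta)$, a nonzero term only requires some $\alpha^{-1}\in V_{\xn_i}$ and $\beta\in V_{\xn_j}$ with $r(\alpha^{-1})=r(\beta)$, i.e.\ it requires $r(V_{\xn_i})\cap r(V_{\xn_j})\neq\emptyset$, not $V_{\xn_i}\cap V_{\xn_j}\neq\emptyset$. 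Disjointness of the bisections only kills the cross terms \emph{on the unit space}; if the ranges overlap, $v^*v$ acquires off-diagonal support whose source meets $K_r$, and $v^*vf=f$ fails. The condition you must arrange is that the sets $r(V_{\xn_i})$ be pairwise disjoint (in addition to missing $K_s$). That forces an inductive choice of the partners $\yn_{\xn_i}$, each avoiding the compact sets $K_s$ and $\overline{r(V_{\xn_1})},\dots,\overline{r(V_{\xn_{i-1}})}$; this is available because $X^s(\xn_i)\cap X^u(\Per)$ is dense in the non-compact space $X^u(\Per)$, but it has to be said, and your closing paragraph still records only ``pairwise disjoint open bisections,'' which is not the right condition.

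For comparison: the paper sidesteps the covering-and-induction issue entirely for $G^s(\Per)$ by a dynamical trick. It chooses $N$ so that $\varphi^{-N}(K_r)$ and $\varphi^{-N}(K_s)$ lie in $X^u\left(\Per,\frac{\epsilon_X}{2}\right)$, picks for each $p\in\Per$ a single point $x_p\in X^u(p)\cap X^s\left(p,\frac{\epsilon_X}{2}\right)$ with the sets $X^u(x_p,\epsilon_X)$ pairwise disjoint and disjoint from $X^u(\Per,\epsilon_X)$, and uses the single bisection $V_{p,x_p}=\{([x,x_p],x)\mid x\in X^u(p,\epsilon_X)\}$ per periodic point; a $w$ supported on $\bigcup_p V_{p,x_p}$ with $w\equiv 1$ over $\varphi^{-N}(K_r)$ then works for $\alpha^N(f)$, and $v=\alpha^{-N}(w)$ works for $f$. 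There the disjointness of ranges is arranged once and for all for the finitely many periodic points. Your covering approach is essentially the one the paper uses in the second appendix theorem (stability of $C^*(G_0(\Per))$), where condition (2)(b) of that proof---$r(V_{x_i,x_i',h_i})\cap r(V_{x_j,x_j',h_j})=\emptyset$ for $i\neq j$, obtained by induction---is exactly the disjoint-ranges requirement your argument is missing.
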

\begin{proof}

	Since the stable Smale space algebra of $(X, \varphi^{-1})$ is the unstable algebra of $(X, \varphi)$, the proof will be complete upon showing $C^*(G^s(\Per))$ is stable. Furthermore, we can assume $\Per$ satisfies $X^u(p) \cap X^u(p') = \emptyset$ for $p \neq p' \in \Per$. Otherwise, we could replace $\Per$ with $\Per - \{ p' \}$ without changing the groupoid. (Note that if $X^u(p) \cap X^u(p') \neq \emptyset$, then $X^u(p)=X^u(p')$.)
	
	Our goal is to apply the previous lemma. Let $f \in C_c(G^s(\Per))$, $K_r := r( {\rm supp}(f))$ and $K_s:=s({\rm supp}(f))$. Since these are both compact subsets of $X^u(\Per)$ there exists $N\in \N$ such that $\varphi^{-N}(K_r)$ and $\varphi^{-N}(K_s)$ are contained in $X^u\left(\Per, \frac{\epsilon_X}{2}\right)$. By proceeding inductively, one can obtain $\{x_p\}_{p\in \Per}$ such that
	\begin{enumerate}
		\item $x_p \in X^u(p) \cap X^s(p, \frac{\epsilon_X}{2})$;
		\item $X^u(x_p, \epsilon_X) \cap X^u(\Per, \epsilon_X)= \emptyset$;
		\item $X^u(x_p, \epsilon_X) \cap X^u(x_{p'}, \epsilon_X) = \emptyset$ for each $p \neq p' \in \Per$.
	\end{enumerate} 
	
	Define $V_{p, x_p} := \{ ([ x , x_p ], x) \mid x \in X^u(p, \epsilon_X) \}$. The sets $V_{p, x_p}$ are open (see Example \ref{exOpenSet}) and disjoint. Let
	\[ 
	V:=\bigcup_{p\in P} V_{p, x_p}.
	\]
	Then, $\varphi^{-N}(K_r) \subseteq s(V)$ and by construction (i.e., item (2), the fact that $\varphi^{-N}(K_s) \subseteq X^u\left(P, \frac{\epsilon_X}{2}\right)$, etc), $\varphi^{-N}(K_s)\cap r(V) = \emptyset$.
	
	Let $w \in C_c(G^s(P))$ with support in $V$ and $v(y, x)=1$ for $x \in \varphi^{-N}(K_r)$. Letting $\alpha$ denote the action on $C^*(G^s(\Per))$ induced from $\varphi$,  it follows that $w^*w \alpha^N(f) = \alpha^N(f)$.
	
	Let $v= \alpha^{-N}(w)$. Then $f^*f = f^* v^* v f$. Moreover, the condition $\varphi^{-N}(K_s)\cap r(V) = \emptyset$ implies that $\alpha^N(f) w=0$ and hence that $f \alpha^{-N}(w)=f v=0$. Thus $v$ satisfies the conditions of the previous lemma; it implies the required result.
\end{proof}

The next result follows from the previous theorem, \cite[Theorems 3.7 and 4.2]{PutSpi} and Brown's theorem \cite{Bro}.
\begin{corollary} 
	Suppose $(X, \varphi)$ is a mixing Smale space and $\Per$ and $\Per'$ are finite $\varphi$-invariant subset of $X$. Then $C^*(G^s(\Per)) \cong C^*(G^s(\Per'))$ and $C^*(G_u(\Per)) \cong C^*(G_u(\Per'))$.
\end{corollary}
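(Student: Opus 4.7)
The plan is to chain together three ingredients, as hinted in the paper. First, Putnam--Spielberg's Theorems 3.7 and 4.2 in \cite{PutSpi} establish that, for a mixing Smale space, the stable $C^*$-algebras $C^*(G^s(\Per))$ and $C^*(G^s(\Per'))$ associated to two different finite $\varphi$-invariant sets of periodic points are strongly Morita equivalent (and likewise for the unstable algebras). This is the content that does the real dynamical work: the Morita equivalence bimodule is constructed from the groupoid of pairs that are both stably equivalent to one base point and unstably equivalent to the other.

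Second, I would invoke Brown's theorem \cite{Bro}: two $\sigma$-unital $C^*$-algebras $A$ and $B$ are strongly Morita equivalent if and only if $A \otimes \K \cong B \otimes \K$. Both $C^*(G^s(\Per))$ and $C^*(G^s(\Per'))$ are separable (since the underlying groupoids are second countable, by Theorem \ref{etaleTopThm}), hence $\sigma$-unital, so the hypotheses of Brown's theorem are satisfied. Combining this with the Morita equivalence of the first step yields
\[ C^*(G^s(\Per)) \otimes \K \;\cong\; C^*(G^s(\Per')) \otimes \K, \]
and analogously for the unstable algebras.

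Finally, the previous theorem (Theorem \ref{SandUstable}) asserts that each of $C^*(G^s(\Per))$, $C^*(G^s(\Per'))$, $C^*(G^u(\Per))$, $C^*(G^u(\Per'))$ is $C^*$-stable, i.e. absorbs $\K$. Consequently
\[ C^*(G^s(\Per)) \;\cong\; C^*(G^s(\Per)) \otimes \K \;\cong\; C^*(G^s(\Per')) \otimes \K \;\cong\; C^*(G^s(\Per')), \]
and the same chain works verbatim for the unstable algebras, completing the proof. There is no genuine obstacle here beyond assembling the cited results; the only care needed is to verify $\sigma$-unitality to apply Brown's theorem, which is automatic from second countability of the groupoids.
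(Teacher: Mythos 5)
Your proposal is correct and follows exactly the route the paper intends: the paper's (one-line) proof cites precisely the same three ingredients — the Morita equivalence from \cite[Theorems 3.7 and 4.2]{PutSpi}, Brown's theorem \cite{Bro}, and the $C^*$-stability established in Theorem \ref{SandUstable}. Your spelled-out chain, including the $\sigma$-unitality check via second countability, is a faithful expansion of that argument.
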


\begin{theorem}
	Suppose $(Y, d_Y, g)$, $K$, and $\beta$ are as in Definition \ref{WielerAxioms}, $(X, \varphi)$ denotes the associated Wieler solenoid and $\Per$ is a finite $\varphi$-invariant set. Then $C^*(G_0(\Per))$ is $C^*$-stable.
\end{theorem}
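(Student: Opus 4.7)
The plan is to verify the hypothesis of Lemma \ref{etaleGrpLemma} for $G_0(\Per)$: given $f \in C_c(G_0(\Per))$, I will produce $v \in C_c(G_0(\Per))$ with $v^*vf = f$ and $fv = 0$. The proof of Theorem \ref{SandUstable} cannot be used directly, because its key step---conjugating by $\varphi^{-N}$ to shrink the supports into a region with small local unstable sets---takes $G_0(\Per)$ into a strictly larger subgroupoid $G_N(\Per)$. The replacement is to exploit Proposition \ref{infiniteDiscreteSet}, which says that each $\sim_0$-class is infinite and discrete, giving enough ``room'' inside $G_0(\Per)$ itself to build a suitable $v$.

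Let $K_r = r(\supp f)$ and $K_s = s(\supp f)$, both compact in $X^u(\Per)$. For each $\zn \in K_r$ the intersection $[\zn]_0 \cap (K_r \cup K_s)$ is finite (a discrete set meets a compact set in finitely many points), so I may choose a partner $\zn' \in [\zn]_0 \setminus (K_r \cup K_s)$. By the definition of $\sim_0$ (more concretely, by Lemma \ref{localHomeoToNbhd}), there is an open neighborhood $U_\zn$ of $\zn$, an open neighborhood $U_\zn'$ of $\zn'$ disjoint from $K_r\cup K_s$, and a homeomorphism $U_\zn \to U_\zn'$ respecting $\sim_0$ pointwise; the graph of this homeomorphism is an open bisection in $G_0(\Per)$ with ``legs'' $U_\zn$ and $U_\zn'$. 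Cover the compact set $K_r$ by finitely many such $U_{\zn_1},\dots,U_{\zn_n}$. Using infiniteness of the $[\zn_i]_0$, choose the partners $\zn_i'$ inductively to be distinct; then shrink the $U_{\zn_i}'$ to be pairwise disjoint (and still disjoint from $K_r\cup K_s$) using that $X^u(\Per)$ is Hausdorff and locally compact.

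This yields open bisections $W_1,\dots,W_n \subseteq G_0(\Per)$ with pairwise disjoint partner-sides, hence pairwise disjoint as subsets of $G_0(\Per)$. Pick a partition of unity $\{\chi_i\}$ on $K_r$ with $\chi_i$ continuous on $X^u(\Per)$ and compactly supported in $U_{\zn_i}$, and define $v_i \in C_c(G_0(\Per))$ to be the function supported on $W_i$ obtained by transporting $\sqrt{\chi_i}$ through the bisection; put $v = \sum_i v_i$. Because the partner-sides are disjoint, the cross terms $v_i^* v_j$ vanish for $i\neq j$, and a direct convolution computation yields $v^*v = \sum_i \chi_i$ as a function on the diagonal of $X^u(\Per)$, which is identically $1$ on $K_r$; hence $v^*v f = f$. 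Similarly, $fv = 0$ holds because any nonzero contribution to $(fv)(\xn,\zn)$ would require an element of $K_s$ to lie in the partner-side of some $W_i$, which is impossible by construction.

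The main obstacle will be the simultaneous disjointness bookkeeping in the second paragraph: one must choose the partners $\zn_i'$ to be distinct, arrange their neighborhoods $U_{\zn_i}'$ to be pairwise disjoint and disjoint from $K_r\cup K_s$, and do all of this compatibly with the local bisection structure supplied by Lemma \ref{localHomeoToNbhd}. Once that geometric setup is in place, the partition-of-unity construction and the verification of the two convolution identities are routine.
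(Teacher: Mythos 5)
Your proposal is correct and follows essentially the same route as the paper: both verify the hypothesis of Lemma \ref{etaleGrpLemma} by covering $K_r = r(\supp f)$ with finitely many bisection sources, using Proposition \ref{infiniteDiscreteSet} to choose (inductively, so as to keep the ranges pairwise disjoint and disjoint from $K_s$) partner points in each $\sim_0$-class, and summing functions supported on the resulting bisections whose squares form a partition of unity over $K_r$. The only cosmetic differences are that you invoke Lemma \ref{localHomeoToNbhd} rather than the maps $h_{\xn}$ directly, and you additionally keep the partner sides away from $K_r$, which is harmless but not needed.
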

\begin{proof}
	Let $f \in C_c(G_0(\Per))$, $K_r := r( {\rm supp}(f))$ and $K_s:=s({\rm supp}(f))$. Since $K_r$ is compact, there exists a finite subcover $\{ X^u(x_i, \delta_{x_i}) \}_{i=1}^n$. 
	
	Using induction and Proposition \ref{infiniteDiscreteSet}, we can find $(x^{\prime}_i)_{i=1}^n$ with the following properties:
	
	\begin{enumerate}
		\item $x_i \sim_0 x^{\prime}_i$;
		\item The sets $V_{x_i, x^{\prime}_i, h_i} := \{ (h_i(z), z) \mid z \in X^u(x_i, \delta_{x_i}) \}$ ($i=1, \ldots n$) satisfy
		\begin{enumerate}
			\item $r(V_{x_i, x^{\prime}_i, h_i} ) \cap K_s = \emptyset$ for each $i=1, \ldots, n$;
			\item $r(V_{x_i, x^{\prime}_i, h_i} ) \cap r(V_{x_j, x^{\prime}_j, h_j} )= \emptyset$ for $i\neq j$.
		\end{enumerate}
	\end{enumerate}
	
	For each $i=1, \ldots, n$ define $v_i \in C_c(G_0(\Per))$ such that 
	\begin{enumerate}
		\item ${\rm supp}(v_i) \subset V_{x_i, x^{\prime}_i, h_i}$;
		\item $(\sum_{i=1}^n v_i^*v_i )|_{K_r} =1$
	\end{enumerate}
	Since $r(V_{x_i, x^{\prime}_i, h_i} ) \cap r(V_{x_j, x^{\prime}_j, h_j} )= \emptyset$ for $i\neq j$, we have that $v_j^*v_i=0$ for $i\neq j$. Let $v= \sum_{i=1}^n$. Then, 
	\[
	v^*v = \left( \sum_{j=1}^n v_j^* \right) \left(\sum_{i=1}^n v \right) = \sum_{i=1}^n v_i^*v_i.
	\]
	Using the fact that $(\sum_{i=1}^n v_i^*v_i )|_{K_r} =1$, we have that $v^*v f = (\sum_{i=1}^n v_i^*v_i)f = f$. 
	
	Furthermore, since $r(V_{x_i, x^{\prime}_i, h_i} ) \cap K_s = \emptyset$ for each $i=1, \ldots n$, $f v_i =0$ for each $i=1, \ldots n$. Thus $fv=0$ and Lemma \ref{etaleGrpLemma} can be applied to obtain the result.
	
\end{proof}


\begin{thebibliography}{10}

\bibitem{AndPut}
J.~E. Anderson and I.~F. Putnam.
\newblock Topological invariants for substitution tilings and their associated
  {$C^*$}-algebras.
\newblock {\em Ergodic Theory Dynam. Systems}, 18(3):509--537, 1998.

\bibitem{Bro}
L.~G. Brown.
\newblock Stable isomorphism of hereditary subalgebras of {$C\sp*$}-algebras.
\newblock {\em Pacific J. Math.}, 71(2):335--348, 1977.

\bibitem{BurkePutnam}
N.~D. Burke and I.~F. Putnam.
\newblock Markov partitions and homology for {$n/m$}-solenoids.
\newblock {\em Ergodic Theory Dynam. Systems}, 37(3):716--738, 2017.

\bibitem{CHR}
L.~O. Clark, A.~an~Huef, and I.~Raeburn.
\newblock The equivalence relations of local homeomorphisms and {F}ell
  algebras.
\newblock {\em New York J. Math.}, 19:367--394, 2013.

\bibitem{DGMW}
R.~J. Deeley, M.~Goffeng, B.~Mesland, and M.~F. Whittaker.
\newblock Wieler solenoids, {C}untz-{P}imsner algebras and {$K$}-theory.
\newblock {\em Ergodic Theory Dynam. Systems} 38 (2018), no. 8, 2942-2988

\bibitem{DS}
R.~J. Deeley and K.~R. Strung.
\newblock Nuclear dimension and classification of {$C^*$}-algebras associated
  to {S}male spaces.
\newblock {\em Trans. Amer. Math. Soc.}  370 (2018), no. 5, 3467-3485

\bibitem{Gon1}
D.~Gon\c{c}alves.
\newblock New {$C^*$}-algebras from substitution tilings.
\newblock {\em J. Operator Theory}, 57(2):391--407, 2007.

\bibitem{Gon2}
D.~Gon\c{c}alves.
\newblock On the {$K$}-theory of the stable {$C^*$}-algebras from substitution
  tilings.
\newblock {\em J. Funct. Anal.}, 260(4):998--1019, 2011.

\bibitem{GonRamSol}
D.~Gon\c{c}alves and M.~Ramirez-Solano.
\newblock On the ${K}$-theory of ${C^*}$-algebras for substitution tilings (a
  pedestrian version).
\newblock arXiv:1712.09551, 2017.

\bibitem{GWY}
E.~Guentner, R.~Willett, and G.~Yu.
\newblock Dynamic asymptotic dimension: relation to dynamics, topology, coarse
  geometry, and {$C^*$}-algebras.
\newblock {\em Math. Ann.}, 367(1-2):785--829, 2017.

\bibitem{HKS}
A.~an~Huef, A.~Kumjian, and A.~Sims.
\newblock A {D}ixmier-{D}ouady theorem for {F}ell algebras.
\newblock {\em J. Funct. Anal.}, 260(5):1543--1581, 2011.

\bibitem{HjeRor}
J.~v.~B. Hjelmborg and M.~R{\o}rdam.
\newblock On stability of {$C^*$}-algebras.
\newblock {\em J. Funct. Anal.}, 155(1):153--170, 1998.

\bibitem{Kil}
D.~B. Killough.
\newblock {\em Ring {S}tructures on the {K}-{T}heory of {C}*-{A}lgebras
  {A}ssociated to {S}male {S}paces}.
\newblock PhD thesis, University of Victoria, 2009.

\bibitem{LM}
D.~Lind and B.~Marcus.
\newblock {\em An introduction to symbolic dynamics and coding}.
\newblock Cambridge University Press, Cambridge, 1995.

\bibitem{Min}
J.~A. Mingo.
\newblock {$C^*$}-algebras associated with one-dimensional almost periodic
  tilings.
\newblock {\em Comm. Math. Phys.}, 183(2):307--337, 1997.

\bibitem{Put}
I.~F. Putnam.
\newblock {$C^*$}-algebras from {S}male spaces.
\newblock {\em Canad. J. Math.}, 48(1):175--195, 1996.

\bibitem{PutHom}
I.~F. Putnam.
\newblock A homology theory for {S}male spaces.
\newblock {\em Mem. Amer. Math. Soc.}, 232(1094):viii+122, 2014.


\bibitem{PutSpi}
I.~F. Putnam and J.~Spielberg.
\newblock The structure of {$C^\ast$}-algebras associated with hyperbolic
  dynamical systems.
\newblock {\em J. Funct. Anal.}, 163(2):279--299, 1999.

\bibitem{RaWi}
I.~Raeburn and D.~P. Williams.
\newblock {\em Morita equivalence and continuous-trace {$C^*$}-algebras},
  volume~60 of {\em Mathematical Surveys and Monographs}.
\newblock American Mathematical Society, Providence, RI, 1998.

\bibitem{Ren}
J.~Renault.
\newblock {\em A groupoid approach to {$C^{\ast} $}-algebras}, volume 793 of
  {\em Lecture Notes in Mathematics}.
\newblock Springer, Berlin, 1980.

\bibitem{RenAP}
J.~Renault.
\newblock The {R}adon-{N}ikod\'ym problem for appoximately proper equivalence
  relations.
\newblock {\em Ergodic Theory Dynam. Systems}, 25(5):1643--1672, 2005.

\bibitem{Rue}
D.~Ruelle.
\newblock {\em Thermodynamic formalism}.
\newblock Cambridge Mathematical Library. Cambridge University Press,
  Cambridge, second edition, 2004.
\newblock The mathematical structures of equilibrium statistical mechanics.

\bibitem{Sma}
S.~Smale.
\newblock Differentiable dynamical systems.
\newblock {\em Bull. Amer. Math. Soc.}, 73:747--817, 1967.

\bibitem{ThoAMS}
K.~Thomsen.
\newblock {$C^*$}-algebras of homoclinic and heteroclinic structure in
  expansive dynamics.
\newblock {\em Mem. Amer. Math. Soc.}, 206(970):x+122, 2010.

\bibitem{ThoSol}
K.~Thomsen.
\newblock The homoclinic and heteroclinic {$C^*$}-algebras of a generalized
  one-dimensional solenoid.
\newblock {\em Ergodic Theory Dynam. Systems}, 30(1):263--308, 2010.

\bibitem{WiePhD}
S.~Wieler.
\newblock {\em Smale {S}paces with {T}otally {D}isconnected {L}ocal {S}table
  {S}ets}.
\newblock PhD thesis, University of Victoria, 2012.

\bibitem{Wie}
S.~Wieler.
\newblock Smale spaces via inverse limits.
\newblock {\em Ergodic Theory Dynam. Systems}, 34(6):2066--2092, 2014.

\bibitem{WilOneDim}
R.~F. Williams.
\newblock Classification of one dimensional attractors.
\newblock In {\em Global {A}nalysis ({P}roc. {S}ympos. {P}ure {M}ath., {V}ol.
  {XIV}, {B}erkeley, {C}alif., 1968)}, pages 341--361. Amer. Math. Soc.,
  Providence, R.I., 1970.

\bibitem{Wil}
R.~F. Williams.
\newblock Expanding attractors.
\newblock {\em Inst. Hautes \'Etudes Sci. Publ. Math.}, (43):169--203, 1974.

\bibitem{WilPhD}
P.~Williamson.
\newblock {\em Cuntz--Pimsner algebras associated with substitutuion tilings.}
\newblock PhD thesis, Unviersity of Victoria, 2016.

\bibitem{WinterStructure}
W.~Winter.
\newblock Structure of nuclear ${C^*}$-algebras: From quasidiagonality to
  classification, and back again.
\newblock arXiv:1712.00247, 2017.

\bibitem{WinterZacharias}
W.~Winter and J.~Zacharias.
\newblock The nuclear dimension of {$C^\ast$}-algebras.
\newblock {\em Adv. Math.}, 224(2):461--498, 2010.

\bibitem{Yi}
I.~Yi.
\newblock {$K$}-theory of {$C^\ast$}-algebras from one-dimensional generalized
  solenoids.
\newblock {\em J. Operator Theory}, 50(2):283--295, 2003.

\end{thebibliography}

\end{document}